\DeclareMathAlphabet{\mathscr}{T1}{pzc}{m}{it} 
\titleformat{\section}[block]{\scshape\filcenter\Large}{\thesection.}{.5em}{}
\titleformat{\subsection}[block]{\bfseries\filcenter\large}{\thesubsection.}{.5em}{\medskip}
\titleformat{\subsubsection}[runin]{\bfseries}{\thesubsubsection.}{.5em}{}[.]
\titlespacing{\subsubsection}{0pt}{10pt}{.5em}
\newtheoremstyle{ntheorem}%
	{\topsep}{\topsep}{\itshape}{0pt}{\bfseries}{.}{.5em}%
	{\thmnumber{#2.\hspace{.5em}}\thmname{#1}\thmnote{ (#3)}}
\newtheoremstyle{ndefinition}%
	{\topsep}{\topsep}{\normalfont}{0pt}{\bfseries}{.}{.5em}%
	{\thmnumber{#2.\hspace{.5em}}\thmname{#1}\thmnote{ (#3)}}
\theoremstyle{ntheorem}
  	\newtheorem{theorem}[subsubsection]{Theorem}
  	\newtheorem{proposition}[subsubsection]{Proposition}
	\newtheorem{lemma}[subsubsection]{Lemma}
  	\newtheorem{corollary}[subsubsection]{Corollary}
	\newtheorem{example}[subsubsection]{Example}
	\newtheorem{remark}[subsubsection]{Remark}
\theoremstyle{ndefinition}
	\newtheorem{definition}[subsubsection]{Definition}
	\edef\Drop@@{%
		\dimen@=#1\relax
		\dimen@=.5\dimen@
		\A@=-\sinDirection\dimen@
		\B@=\cosDirection\dimen@
		\setboxz@h{%
			\setbox2=\hbox{\kern3\A@\raise3\B@\copy\z@}%
			\dp2=\z@ \ht2=\z@ \wd2=\z@ \box2
			\setbox2=\hbox{\kern\A@\raise\B@\copy\z@}%
			\dp2=\z@ \ht2=\z@ \wd2=\z@ \box2
			\setbox2=\hbox{\kern-\A@\raise-\B@\copy\z@}%
			\dp2=\z@ \ht2=\z@ \wd2=\z@ \box2
			\setbox2=\hbox{\kern-3\A@\raise-3\B@ \noexpand\boxz@}%
			\dp2=\z@ \ht2=\z@ \wd2=\z@ \box2
		}%
		\ht\z@=\z@ \dp\z@=\z@ \wd\z@=\z@ \noexpand\styledboxz@
	}%
\xydef@\Tttip@{\kern2pt \vrule height2pt depth2pt width\z@
	\Tttip@@ \kern2pt \egroup
	\U@c=0pt \D@c=0pt \L@c=0pt \R@c=0pt \Edge@c={\circleEdge}%
	\def\Leftness@{.5}\def\Upness@{.5}%
	\def\Drop@@{\styledboxz@}\def\Connect@@{\straight@{\dottedSpread@\jot}}}
\xydef@\Tttip@@{%
	\dimen@=.25\dimen@
 	\B@=\cosDirection\dimen@
	\setboxz@h\bgroup\reverseDirection@\line@ \wdz@=\z@ \ht\z@=\z@ \dp\z@=\z@
	{\vDirection@(1,-1)\xydashl@ \xyatipfont\char\DirectionChar}%
	{\vDirection@(1,+1)\xydashl@ \xybtipfont\char\DirectionChar}%
}
\xydef@\ar@form{
	\ifx \space@\next \expandafter\DN@\space{\xyFN@\ar@form}%
	\else\ifx ^\next \DN@ ^{\xyFN@\ar@style}\edef\arvariant@@{\string^}%
	\else\ifx _\next \DN@ _{\xyFN@\ar@style}\edef\arvariant@@{\string_}%
	\else\ifx 0\next \DN@ 0{\xyFN@\ar@style}\def\arvariant@@{0}%
	\else\ifx 1\next \DN@ 1{\xyFN@\ar@style}\def\arvariant@@{1}%
	\else\ifx 2\next \DN@ 2{\xyFN@\ar@style}\def\arvariant@@{2}%
	\else\ifx 3\next \DN@ 3{\xyFN@\ar@style}\def\arvariant@@{3}%
	\else\ifx 4\next \DN@ 4{\xyFN@\ar@style}\def\arvariant@@{4}%
	\else\ifx \bgroup\next \let\next@=\ar@style
	\else\ifx [\next \DN@[##1]{\ar@modifiers{[##1]}}
	\else\ifx *\next \DN@ *{\ar@modifiers}%
	\else\addLT@\ifx\next \let\next@=\ar@slide
	\else\ifx /\next \let\next@=\ar@curveslash
	\else\ifx (\next \let\next@=\ar@curveinout 
	\else\addRQ@\ifx\next \addRQ@\DN@{\ar@curve@}%
	\else\addLQ@\ifx\next \addLQ@\DN@{\xyFN@\ar@curve}%
	\else\addDASH@\ifx\next \addDASH@\DN@{\defarstem@-\xyFN@\ar@}%
	\else\addEQ@\ifx\next \addEQ@\DN@{\def\arvariant@@{2}\defarstem@-\xyFN@\ar@}%
	\else\addDOT@\ifx\next \addDOT@\DN@{\defarstem@.\xyFN@\ar@}%
	\else\ifx :\next \DN@:{\def\arvariant@@{2}\defarstem@.\xyFN@\ar@}%
	\else\ifx ~\next \DN@~{\defarstem@~\xyFN@\ar@}%
	\else\ifx !\next \DN@!{\dasharstem@\xyFN@\ar@}%
	\else\ifx ?\next \DN@?{\ar@upsidedown\xyFN@\ar@}%
	\else \let\next@=\ar@error
	\fi\fi\fi\fi\fi\fi\fi\fi\fi\fi\fi\fi\fi\fi\fi\fi\fi\fi\fi\fi\fi\fi\fi \next@}
\newcommand{\dfl}{\Rightarrow}
\newcommand{\qfl}{\xymatrix@1@C=10pt{\ar@4 [r] &}}
\renewcommand{\phi}{\varphi}
\renewcommand{\epsilon}{\varepsilon}
\definecolor{orange}{rgb}{1,0.55,0}
\definecolor{vert}{rgb}{0,0.45,0}
\newcommand{\ifthen}[2]{\ifthenelse{#1}{#2}{}}
\newcommand{\odfl}[1]{\overset{#1}{\dfl}}
\begin{document}

\quad

\vspace{-2cm}

\begin{center}
\begin{Large}
\textsc{Rewriting in higher dimensional linear categories and application to the affine oriented Brauer category}
\end{Large}

\vskip+5pt

\textbf{Cl\'ement Alleaume}

Univ Lyon, Université Claude Bernard Lyon 1, CNRS UMR 5208,

Institut Camille Jordan, 43 blvd. du 11 novembre 1918, F-69622 Villeurbanne cedex, France \textsf{clement.alleaume@univ-st-etienne.fr}
\end{center}

\begin{small}\begin{minipage}{14cm}
\noindent\textbf{Abstract --} 
In this paper, we introduce a rewriting theory of linear monoidal categories. Those categories are a particular case of what we will define as linear~$(n,p)$-categories. We will also define linear~$(n,p)$-polygraphs, a linear adapation of $n$-polygraphs, to present linear~$(n-1,p)$-categories. We focus then on linear~$(3,2)$-polygraphs to give presentations of linear monoidal categories. We finally give an application of this theory in linear~$(3,2)$-polygraphs to prove a basis theorem on the category~$\mathcal{AOB}$ with a new method using a rewriting property defined by van Ostroom: decreasingness.

\end{minipage}\end{small}


\section{Introduction}

Affine walled Brauer algebras were introduced by Rui and Su \cite{RS} in the study of super Schur-Weyl duality. A result of \cite{RS} is the Schur-Weyl duality between general Lie superalgebras and affine walled Brauer algebras. A linear monoidal category, the affine oriented Brauer category~$\mathcal{AOB}$ was introduced in \cite{BCNR} to encode each walled Brauer algebra as one of its morphism spaces. This category is used to prove basis theorems for the affine walled Brauer algebras given in \cite{RS}, which gives an explicit basis for each affine walled Brauer algebra. The proof of this theorem uses an intermediate result on cyclotomic quotients of $\mathcal{AOB}$. For each of those quotients, a basis is given. With these multiple bases, each morphism space of $\mathcal{AOB}$ is given a generating family which is proved to be linearly independent.
\medskip

Our aim is to give a constructive proof of the mentioned basis result. For this, we study $\mathcal{AOB}$ in this article by rewriting methods. Rewriting is a model of computation presenting relations between expressions as oriented computation steps. There are multiple examples of rewriting systems. An abstract rewriting system \cite{Huet} is the data of a set $S$ and a relation~$\rightarrow$ on~$S$ called the rewrite relation. A rewriting sequence from $a$ to~$b$ is a finite sequence $(u_0,u_1,\cdots u_{n-1},u_n)$ of elements of $S$ such that:
$$a=u_0,$$
$$b=u_n,$$
and for any $0 \leqslant k<n$, we have~$u_k \rightarrow u_{k+1}$. A word rewriting system is the data of an alphabet $A$ and a relation~$\Rightarrow$ on the free monoid~$A^*$ over~$A$. We say that there is a rewriting step from a word~$u$ to a word~$v$ if there are words $a$,~$b$,~$u'$ and~$v'$ such that:
$$u=au'b,$$
$$v=av'b,$$
$$u'\Rightarrow v'.$$
A higher dimensional generalization of such rewriting systems has been introduced by Burroni \cite{Burroni93} under the name of polygraph. An $(n+1)$-polygraph is a rewriting system on the $n$-cells of an $n$-category.
\medskip

To study $\mathcal{AOB}$ from a rewriting point of view, we will need to introduce the rewriting systems presenting monoidal linear categories. The objects giving such rewriting systems will be called linear~$(n,p)$-polygraphs which are linear adaptations of $n$-polygraphs. Linear monoidal categories are a special case of what we will call linear~$(n,p)$-categories. In this language, linear monoidal categories are linear~$(2,2)$-categories with only one 0-cell. They are presented by linear~$(3,2)$-polygraphs. Once those objects are defined, we will introduce the rewriting theory of linear~$(3,2)$-polygraphs. Once we have this theory, we will use it to construct bases for the morphism spaces of $\mathcal{AOB}$.
\medskip

Rewriting can offer constructive proofs by giving presentations of objects with certain properties. For example, two properties studied in rewriting systems are termination and confluence. A rewriting system is terminating if it has no infinite rewriting sequence, in which case all computations end. A rewriting system is confluent if any pair of rewriting sequences with the same source can be completed into a pair of rewriting sequences with the same target, in which case all computations lead to the same result. A rewriting system is said to be convergent if it is terminating and confluent.In the case of word rewriting, the property of convergence, the conjunction of termination and confluence, gives a way to decide the word problem, that is, deciding if two words in $A^*$ are equal in the quotient of $A^*$ by the relation~$\Rightarrow$.

\medskip

What we will do in the case of $\mathcal{AOB}$ is giving a confluent presentation~$\overline{\mathrm{AOB}}$ of this linear~$(2,2)$-category with some others properties. Those properties will prove the bases given in \cite{BCNR} are indeed bases. The linear~$(3,2)$-polygraph~$\overline{\mathrm{AOB}}$ will not be terminating, which will prevent us to prove~$\overline{\mathrm{AOB}}$ is confluent by the use of Newman's lemma, a criterion needing termination to prove confluence from a weaker property called local confluence \cite{Huet}. To prove~$\overline{\mathrm{AOB}}$ is confluent, we will use a more general property called decreasingness introduced in \cite{VO}. We will prove~$\overline{\mathrm{AOB}}$ is decreasing and use the theorem from \cite{VO} stating decreasingness implies confluence.
\medskip

In the first section, we recall first the notions of higher dimensional category theory. Then, we define linear~$(n,p)$-categories, which will be our higher dimensional categories with linear structure. After defining them, we recall in the second section the categorical construction of the category of $n$-polygraphs given in \cite{Metayer08}. We define next the categorical construction of the category of linear~$(n,p)$-polygraphs. We give their main rewriting properties, such than \ref{basis} in the case $(n,p)=(3,2)$ in which~$\mathcal{AOB}$ falls.
\medskip

In the third section we will study the decreasingness property defined in the case of abstract rewriting systems by van Ostroom \cite{VO}. Then, in the last section, we recall from \cite{BCNR} the definition of the linear~$(2,2)$-category~$\mathcal{AOB}$. This will lead us to give two linear~$(3,2)$-polygraphs presenting~$\mathcal{AOB}$. Those linear~$(3,2)$-polygraphs will be called~$\mathrm{AOB}$ and~$\overline{\mathrm{AOB}}$. The first one is a translation of the definition of $\mathcal{AOB}$.
\medskip

The main result of this article, Theorem \ref{thconf} states $\overline{\mathrm{AOB}}$ is confluent. It will be proved with the properties of confluence of critical branchings and decreasingness. This theorem gives us the main result of \cite{BCNR} as an entirely constructive consequence given as corollary \ref{Bru}.

\section*{Acknowledgments}

The author would like to thank Labex Milyon (ANR-10-LABX-0070). This work has been supported by the project \emph{Cathre}, ANR-13BS02-0005-02.

\section{Linear~$(n,p)$-categories}

This section and section 3 will recall the basic notions of $n$-category and~$n$-polygraphs. The rewriting techniques used in this paper will be presented in the last two sections.

\subsection{Preliminaries}

We fix $n$ an integer. We denote by $\mathbf{Set}$ the category of sets.

\begin{definition} An \emph{$n$-graph} in a category~$\mathbf{C}$ is a diagram in $\mathbf{C}$:
\begin{gather*}
\begin{array}{c}
\tikz[scale=0.9]{
\node at (0,0) {$G_0$};
\draw[color=black, ->] (1.5,0.25) -- (0.5,0.25);
\node at (1,0.5) {$s_0$};
\draw[color=black, ->] (1.5,-0.25) -- (0.5,-0.25);
\node at (1,-0.5) {$t_0$};
\node at (2,0) {$G_1$};
\draw[color=black, ->] (3.5,0.25) -- (2.5,0.25);
\node at (3,0.5) {$s_1$};
\draw[color=black, ->] (3.5,-0.25) -- (2.5,-0.25);
\node at (3,-0.5) {$t_1$};
\node at (4,0) {$\cdots$};
\draw[color=black, ->] (5.5,0.25) -- (4.5,0.25);
\node at (5,0.5) {$s_{n-2}$};
\draw[color=black, ->] (5.5,-0.25) -- (4.5,-0.25);
\node at (5,-0.5) {$t_{n-2}$};
\node at (6,0) {$G_{n-1}$};
\draw[color=black, ->] (7.5,0.25) -- (6.5,0.25);
\node at (7,0.5) {$s_{n-1}$};
\draw[color=black, ->] (7.5,-0.25) -- (6.5,-0.25);
\node at (7,-0.5) {$t_{n-1}$};
\node at (8,0) {$G_n$};
} \end{array}
\end{gather*}
such that for any $1 \leqslant k \leqslant n-1$, we have~$s_{k-1} \circ s_k=s_{k-1} \circ t_k$ and~$t_{k-1} \circ s_k=t_{k-1} \circ t_k$. Those relations are called the \emph{globular relations}. We just call an $n$-graph in $\mathbf{Set}$ an $n$-graph.

The elements of $G_k$ are called~\emph{$k$-cells}. The maps $s_k$ and~$t_k$ are respectively called~\emph{$k$-source} and~\emph{$k$-target maps}. For any $l$-cell~$u$ of $G$ with~$l > k+1$, we respectively denote by $s_k(u)$ and~$t_k(u)$ the $k$-cells $(s_k \circ \cdots  \circ s_{l-1})(u)$ and~$(t_k \circ \cdots  \circ t_{l-1})(u)$.

A \emph{morphism of $n$-graphs} $F$ from $G$ to~$G'$ is a collection~$(F_k: G_k \rightarrow G'_k)$ of maps such that, for every~$0 < k \leqslant n$, the following diagrams commute:
\begin{gather*}
\begin{array}{c}
\tikz[scale=0.9]{
\node at (0,0) {$G_{k-1}$};
\draw[color=black, ->] (1.5,0) -- (0.5,0);
\node at (1,0.5) {$s_{k-1}$};
\node at (2,0) {$G_k$};
\draw[color=black, ->] (0,-0.3) -- (0,-1.7);
\node at (-0.5,-1) {$F_{k-1}$};
\draw[color=black, ->] (2,-0.3) -- (2,-1.7);
\node at (2.5,-1) {$F_k$};
\node at (0,-2) {$G'_{k-1}$};
\draw[color=black, ->] (1.5,-2) -- (0.5,-2);
\node at (1,-2.25) {$s'_{k-1}$};
\node at (2,-2) {$G'_k$};
} \end{array}
\qquad
\begin{array}{c}
\tikz[scale=0.9]{
\node at (0,0) {$G_{k-1}$};
\draw[color=black, ->] (1.5,0) -- (0.5,0);
\node at (1,0.5) {$t_{k-1}$};
\node at (2,0) {$G_k$};
\draw[color=black, ->] (0,-0.3) -- (0,-1.7);
\node at (-0.5,-1) {$F_{k-1}$};
\draw[color=black, ->] (2,-0.3) -- (2,-1.7);
\node at (2.5,-1) {$F_k$};
\node at (0,-2) {$G'_{k-1}$};
\draw[color=black, ->] (1.5,-2) -- (0.5,-2);
\node at (1,-2.25) {$t'_{k-1}$};
\node at (2,-2) {$G'_k$};
} \end{array}
\end{gather*}

We call $\mathbf{Grph}_n$ the category of $n$-graphs.
\end{definition}

In particular,~$\mathbf{Grph}_0$ is the category~$\mathbf{Set}$ and~$\mathbf{Grph}_1$ is the category of directed graphs.

We can define \emph{$n$-categories} by enrichment. A 0-category is a set. For~$n \geqslant 1$, an $n$-category is a 1-category enriched in $(n-1)$-categories. We denote by $\mathbf{Cat}_n$ the category of $n$-categories and~$n$-functors. This category has a terminal object $I_n$ with only one $k$-cell for~$0 \leqslant k \leqslant n$.

In an $n$-category, for any $0 \leqslant k<n$, we denote the $k$-composition by $\star_k$. For all $0 \leqslant i<j \leqslant n-1$ the following equality, called \emph{exchange relation}, holds:
\begin{equation}\label{exch} (u \star_i v) \star_j (u' \star_i v')=(u \star_j u') \star_i (v \star_j v'). \end{equation}

By forgetting its units and compositions, an $n$-category is in particular an $n$-graph. We now denote by $\mathcal{U}_n$ the forgetful functor from $\mathbf{Cat}_n$ to~$\mathbf{Grph}_n$.

\subsection{Linear~$(n,p)$-categories}

In this section, we fix $n$ an integer. We will define linear~$(n,p)$-categories by induction on~$p \leqslant n$. We will denote by $\mathbf{Vect}$ the category of vector spaces over a fixed field.

\begin{definition} An internal $n$-category in $\mathbf{Vect}$ is the data of:
\begin{itemize}
\item an $n$-graph in $\mathbf{Vect}$:
\begin{gather*}
\begin{array}{c}
\tikz[scale=0.9]{
\node at (0,0) {$V_0$};
\draw[color=black, ->] (1.5,0.25) -- (0.5,0.25);
\node at (1,0.5) {$s_0$};
\draw[color=black, ->] (1.5,-0.25) -- (0.5,-0.25);
\node at (1,-0.5) {$t_0$};
\node at (2,0) {$V_1$};
\draw[color=black, ->] (3.5,0.25) -- (2.5,0.25);
\node at (3,0.5) {$s_1$};
\draw[color=black, ->] (3.5,-0.25) -- (2.5,-0.25);
\node at (3,-0.5) {$t_1$};
\node at (4,0) {$\cdots$};
\draw[color=black, ->] (5.5,0.25) -- (4.5,0.25);
\node at (5,0.5) {$s_{n-2}$};
\draw[color=black, ->] (5.5,-0.25) -- (4.5,-0.25);
\node at (5,-0.5) {$t_{n-2}$};
\node at (6,0) {$V_{n-1}$};
\draw[color=black, ->] (7.5,0.25) -- (6.5,0.25);
\node at (7,0.5) {$s_{n-1}$};
\draw[color=black, ->] (7.5,-0.25) -- (6.5,-0.25);
\node at (7,-0.5) {$t_{n-1}$};
\node at (8,0) {$V_n$};,
} \end{array}
\end{gather*}
\item for each $0 \leqslant k <l \leqslant n$, a linear unit map $v \mapsto 1_v$ from $V_k$ to~$V_{k+1}$. Linearity of the unit maps gives us the following relation:
\begin{equation}\label{idlin} 1_{\lambda u+ \mu v}=\lambda 1_u+ \mu 1_v\end{equation}
for any scalars $\lambda$ and~$\mu$ and any $k$-cells $u$ and~$v$ such that~$p \leqslant k<n$,
\item for each $0 \leqslant k <l \leqslant n$, a linear composition map $\star_k$ from $V_l \times_{V_k} V_l$ to~$V_k$. Linearity of the unit maps gives us the following properties:
\begin{equation}\label{complin1} (f+g)\star_k (f'+g')=f \star_k f'+g \star_k g',\end{equation}
\begin{equation}\label{complin2} \lambda f \star_k \lambda f'=\lambda (f \star_k f').\end{equation}
for any scalar $\lambda$ and any pairs $(f,f')$ and~$(g,g')$ of $k$-composable $l$-cells such that~$p \leqslant k<l \leqslant n$
\end{itemize}
verifying the axioms of $n$-categories.
\end{definition}

Internal $n$-categories in $\mathbf{Vect}$ are also called~$(n+1)$-vector spaces \cite{BaezCrans04}, see also \cite{KMP}.

\begin{definition} A linear~$(n,0)$-category is an internal $n$-category in $\mathbf{Vect}$. Let us assume linear~$(n,p)$-categories are defined for~$p \geqslant 0$. A linear~$(n+1,p+1)$-category is the data of a set $\mathcal{C}_0$ and:
\begin{itemize}
\item for each $a$ and~$b$ in $\mathcal{C}_0$, a linear~$(n,p)$-category~$\mathcal{C}(a,b)$,
\item for each $a$ in $\mathcal{C}_0$, an identity morphism $i_a$ from the terminal $n$-category~$I_n$ to~$\mathcal{C}(a,a)$,
\item for each $a$,~$b$ and~$c$ in $\mathcal{C}_0$, a bilinear composition morphism $\star^{a,b,c}$ from $\mathcal{C}(a,b) \times \mathcal{C}(b,c)$ to~$\mathcal{C}(a,c)$.
\end{itemize}
such that:
\begin{itemize}
\item $\star^{a,c,d} \circ (\star^{a,b,c} \times id_{\mathcal{C}(c,d)})=\star^{a,b,d} \circ (id_{\mathcal{C}(a,b)} \times \star^{b,c,d})$,
\item $\star^{a,a,b} \circ (i_a \times id_{\mathcal{C}(a,b)}) \circ is_l=id_{\mathcal{C}(a,b)}=\star^{a,b,b} \circ (id_{\mathcal{C}(a,b)} \times i_q) \circ is_r$ where $is_l$ and~$is_r$ respectively denote the canonic isomorphisms from $\mathcal{C}(a,b)$ to~$I_n \times \mathcal{C}(a,b)$ and to~$\mathcal{C}(a,b) \times I_n$.
\end{itemize}
\end{definition}

\begin{remark} In particular, a linear~$(n,p)$-category is an $n$-category. \end{remark}

A \emph{morphism of linear~$(n,p)$-categories} from $\mathcal{C}$ to~$\mathcal{C}'$ is an $n$-functor:
\begin{gather*}
\begin{array}{c}
\tikz[scale=0.9]{
\node at (0,0) {$\mathcal{C}_0$};
\draw[color=black, ->] (1.5,0.25) -- (0.5,0.25);
\node at (1,0.5) {$s_0$};
\draw[color=black, ->] (1.5,-0.25) -- (0.5,-0.25);
\node at (1,-0.5) {$t_0$};
\node at (2,0) {$\mathcal{C}_1$};
\draw[color=black, ->] (3.5,0.25) -- (2.5,0.25);
\node at (3,0.5) {$s_1$};
\draw[color=black, ->] (3.5,-0.25) -- (2.5,-0.25);
\node at (3,-0.5) {$t_1$};
\node at (4,0) {$\cdots$};
\draw[color=black, ->] (5.5,0.25) -- (4.5,0.25);
\node at (5,0.5) {$s_{n-2}$};
\draw[color=black, ->] (5.5,-0.25) -- (4.5,-0.25);
\node at (5,-0.5) {$t_{n-2}$};
\node at (6,0) {$\mathcal{C}_{n-1}$};
\draw[color=black, ->] (7.5,0.25) -- (6.5,0.25);
\node at (7,0.5) {$s_{n-1}$};
\draw[color=black, ->] (7.5,-0.25) -- (6.5,-0.25);
\node at (7,-0.5) {$t_{n-1}$};
\node at (8,0) {$\mathcal{C}_n$};
\node at (-0.5,-1.5) {$F_0$};
\draw[color=black, ->] (0,-0.3) -- (0,-2.7);
\node at (1.5,-1.5) {$F_1$};
\draw[color=black, ->] (2,-0.3) -- (2,-2.7);
\node at (5.3,-1.5) {$F_{n-1}$};
\draw[color=black, ->] (6,-0.3) -- (6,-2.7);
\node at (7.5,-1.5) {$F_n$};
\draw[color=black, ->] (8,-0.3) -- (8,-2.7);
\node at (0,-3) {$\mathcal{C}'_0$};
\draw[color=black, ->] (1.5,-2.75) -- (0.5,-2.75);
\node at (1,-2.5) {$s'_0$};
\draw[color=black, ->] (1.5,-3.25) -- (0.5,-3.25);
\node at (1,-3.5) {$t'_0$};
\node at (2,-3) {$\mathcal{C}'_1$};
\draw[color=black, ->] (3.5,-2.75) -- (2.5,-2.75);
\node at (3,-2.5) {$s'_1$};
\draw[color=black, ->] (3.5,-3.25) -- (2.5,-3.25);
\node at (3,-3.5) {$t'_1$};
\node at (4,-3) {$\cdots$};
\draw[color=black, ->] (5.5,-2.75) -- (4.5,-2.75);
\node at (5,-2.5) {$s'_{n-2}$};
\draw[color=black, ->] (5.5,-3.25) -- (4.5,-3.25);
\node at (5,-3.5) {$t'_{n-2}$};
\node at (6,-3) {$\mathcal{C}'_{n-1}$};
\draw[color=black, ->] (7.5,-2.75) -- (6.5,-2.75);
\node at (7,-2.5) {$s'_{n-1}$};
\draw[color=black, ->] (7.5,-3.25) -- (6.5,-3.25);
\node at (7,-3.5) {$t'_{n-1}$};
\node at (8,-3) {$\mathcal{C}'_n$};
} \end{array}
\end{gather*}
such that for any $p \leqslant k \leqslant n$, the map $F_k$ is linear. We denote by $\mathbf{LinCat}_{n,p}$ the category of linear~$(n,p)$-categories. We denote by $\mathcal{U}_{n,p}$ the forgetful functor from $\mathbf{LinCat}_{n,p}$ to~$\mathbf{Grph}_n$.

\begin{example} A linear~$(1,1)$-category is just a linear category or a category enriched in vector spaces as introduced by Mitchell  \cite{Mitchell72}.
\end{example}

\begin{remark} Let $\mathcal{C}$ be a linear~$(n,p)$-category with~$0 \leqslant p<n$. The underlying~$(n-1)$-category to~$\mathcal{C}$ is a linear~$(n-1,p)$-category. Indeed, there is an internal $(n+p-1)$-graph in the category of vector spaces given by:
\begin{gather*}
\begin{array}{c}
\tikz[scale=0.9]{
\node at (0,0) {$\mathcal{C}_p$};
\draw[color=black, ->] (1.5,0.25) -- (0.5,0.25);
\node at (1,0.5) {$s_p$};
\draw[color=black, ->] (1.5,-0.25) -- (0.5,-0.25);
\node at (1,-0.5) {$t_p$};
\node at (2,0) {$\mathcal{C}_{p+1}$};
\draw[color=black, ->] (3.5,0.25) -- (2.5,0.25);
\node at (3,0.5) {$s_{p+1}$};
\draw[color=black, ->] (3.5,-0.25) -- (2.5,-0.25);
\node at (3,-0.5) {$t_{p+1}$};
\node at (4,0) {$\cdots$};
\draw[color=black, ->] (5.5,0.25) -- (4.5,0.25);
\node at (5,0.5) {$s_{n-3}$};
\draw[color=black, ->] (5.5,-0.25) -- (4.5,-0.25);
\node at (5,-0.5) {$t_{n-3}$};
\node at (6,0) {$\mathcal{C}_{n-2}$};
\draw[color=black, ->] (7.5,0.25) -- (6.5,0.25);
\node at (7,0.5) {$s_{n-2}$};
\draw[color=black, ->] (7.5,-0.25) -- (6.5,-0.25);
\node at (7,-0.5) {$t_{n-2}$};
\node at (8,0) {$\mathcal{C}_{n-1}$};
} \end{array}
\end{gather*}
which can be completed into an $(n-1)$-category:
\begin{gather*}
\begin{array}{c}
\tikz[scale=0.9]{
\node at (-8,0) {$\mathcal{C}_0$};
\draw[color=black, ->] (-6.5,0.25) -- (-7.5,0.25);
\node at (-7,0.5) {$s_0$};
\draw[color=black, ->] (-6.5,-0.25) -- (-7.5,-0.25);
\node at (-7,-0.5) {$t_0$};
\node at (-6,0) {$\mathcal{C}_1$};
\draw[color=black, ->] (-4.5,0.25) -- (-5.5,0.25);
\node at (-5,0.5) {$s_1$};
\draw[color=black, ->] (-4.5,-0.25) -- (-5.5,-0.25);
\node at (-5,-0.5) {$t_1$};
\node at (-4,0) {$\cdots$};
\draw[color=black, ->] (-2.5,0.25) -- (-3.5,0.25);
\node at (-3,0.5) {$s_{p-2}$};
\draw[color=black, ->] (-2.5,-0.25) -- (-3.5,-0.25);
\node at (-3,-0.5) {$t_{p-2}$};
\node at (-2,0) {$\mathcal{C}_{p-1}$};
\draw[color=black, ->] (-0.5,0.25) -- (-1.5,0.25);
\node at (-1,0.5) {$s_{p-1}$};
\draw[color=black, ->] (-0.5,-0.25) -- (-1.5,-0.25);
\node at (-1,-0.5) {$t_{p-1}$};
\node at (0,0) {$\mathcal{C}_p$};
\draw[color=black, ->] (1.5,0.25) -- (0.5,0.25);
\node at (1,0.5) {$s_p$};
\draw[color=black, ->] (1.5,-0.25) -- (0.5,-0.25);
\node at (1,-0.5) {$t_p$};
\node at (2,0) {$\mathcal{C}_{p+1}$};
\draw[color=black, ->] (3.5,0.25) -- (2.5,0.25);
\node at (3,0.5) {$s_{p+1}$};
\draw[color=black, ->] (3.5,-0.25) -- (2.5,-0.25);
\node at (3,-0.5) {$t_{p+1}$};
\node at (4,0) {$\cdots$};
\draw[color=black, ->] (5.5,0.25) -- (4.5,0.25);
\node at (5,0.5) {$s_{n-3}$};
\draw[color=black, ->] (5.5,-0.25) -- (4.5,-0.25);
\node at (5,-0.5) {$t_{n-3}$};
\node at (6,0) {$\mathcal{C}_{n-2}$};
\draw[color=black, ->] (7.5,0.25) -- (6.5,0.25);
\node at (7,0.5) {$s_{n-2}$};
\draw[color=black, ->] (7.5,-0.25) -- (6.5,-0.25);
\node at (7,-0.5) {$t_{n-2}$};
\node at (8,0) {$\mathcal{C}_{n-1}$};
} \end{array}
\end{gather*}
The source maps, target maps and composition maps of this $(n-1)$-category meet all the requirements of a linear~$(n-1,p)$-category. \end{remark}

\section{Polygraphs}

\subsection{$n$-Polygraphs}

The notion of polygraph was introduced by Burroni \cite{Burroni93}. It was introduced independently by Street under the name of computad \cite{Street87} as system of generators and relations for higher dimensional categories. Let us recall the constructions of the categories of $n$-categories with a globular extension and~$n$-polygraphs from \cite{Metayer08}.

\begin{definition} The category~$\mathbf{Cat}_n^+$ of $n$-categories with a globular extension is defined by the following pullback diagram:
\begin{gather*}
\begin{array}{c}
\tikz[scale=0.9]{
\node at (0,0) {$\mathbf{Cat}_n^+$};
\draw[color=black, ->] (0.5,0) -- (2,0);
\node at (2.9,0) {$\mathbf{Grph}_{n+1}$};
\draw[color=black, ->] (2.7,-0.5) -- (2.7,-2);
\node at (2.7,-2.5) {$\mathbf{Grph}_n$};
\draw[color=black, ->] (0,-0.5) -- (0,-2);
\node at (0,-2.5) {$\mathbf{Cat}_n$};
\draw[color=black, ->] (0.5,-2.5) -- (2,-2.5);
\node at (2.7,-2.5) {$\mathbf{Grph}_n$};
\draw[color=black] (0.3,-0.5) -- (0.55,-0.5) -- (0.55,-0.25);
\node at (1.25,-3) {$\mathcal{U}_n$};
\node at (3.2,-1.25) {$\mathcal{U}_n^G$};
} \end{array}
\end{gather*}
where $\mathcal{U}_n^G$ is the functor from $\mathbf{Grph}_{n+1}$ to~$\mathbf{Grph}_n$ associating to each $(n+1)$-graph its underlying~$n$-graph by eliminating the $(n+1)$-cells.
\end{definition}

In a diagrammatic way, a globular extension of an $n$-category~$\mathcal{C}$ is the data of a set $\Gamma$ and of two maps $s_n$ and~$t_n$ from $\Gamma$ to~$\mathcal{C}_n$ forming an $(n+1)$-graph:
\begin{gather*}
\begin{array}{c}
\tikz[scale=0.9]{
\node at (0,0) {$\mathcal{C}_0$};
\draw[color=black, ->] (1.5,0.25) -- (0.5,0.25);
\node at (1,0.5) {$s_0$};
\draw[color=black, ->] (1.5,-0.25) -- (0.5,-0.25);
\node at (1,-0.5) {$t_0$};
\node at (2,0) {$\mathcal{C}_1$};
\draw[color=black, ->] (3.5,0.25) -- (2.5,0.25);
\node at (3,0.5) {$s_1$};
\draw[color=black, ->] (3.5,-0.25) -- (2.5,-0.25);
\node at (3,-0.5) {$t_1$};
\node at (4,0) {$\cdots$};
\draw[color=black, ->] (5.5,0.25) -- (4.5,0.25);
\node at (5,0.5) {$s_{n-2}$};
\draw[color=black, ->] (5.5,-0.25) -- (4.5,-0.25);
\node at (5,-0.5) {$t_{n-2}$};
\node at (6,0) {$\mathcal{C}_{n-1}$};
\draw[color=black, ->] (7.5,0.25) -- (6.5,0.25);
\node at (7,0.5) {$s_{n-1}$};
\draw[color=black, ->] (7.5,-0.25) -- (6.5,-0.25);
\node at (7,-0.5) {$t_{n-1}$};
\node at (8,0) {$\mathcal{C}_n$};
\draw[color=black, ->] (9.5,0.25) -- (8.5,0.25);
\node at (9,0.5) {$s_n$};
\draw[color=black, ->] (9.5,-0.25) -- (8.5,-0.25);
\node at (9,-0.5) {$t_n$};
\node at (10,0) {$\Gamma$};
} \end{array}
\end{gather*}
A morphism of $n$-categories with globular extension from $(\mathcal{C},\Gamma)$ to~$(\mathcal{C}',\Gamma')$ is the data of an $n$-functor~$F$ from $\mathcal{C}$ to~$\mathcal{C}'$ and a map $\phi$ from $\Gamma$ to~$\Gamma'$ such that~$(F,\phi)$ makes a morphism of $(n+1)$-graphs.

\begin{definition} Let $(\mathcal{C},\Gamma)$ be an $n$-category with globular extension. The free~$(n+1)$-category over~$(\mathcal{C},\Gamma)$ is the $(n+1)$-category which underlying~$n$-category is $\mathcal{C}$ and which~$(n+1)$-cells are the compositions of elements of $\Gamma$ and elements of the form $1_u$ where $u$ is in $\mathcal{C}_n$. The free functor from $\mathbf{Cat}_n^+$ to~$\mathbf{Grph}_{n+1}$ is denoted by $\mathcal{F}_{n+1}^W$.
\end{definition}


\begin{definition} $\mathbf{Pol}_0$ is the category of sets and the functor~$\mathcal{F}_0$ from $\mathbf{Pol}_0$ to~$\mathbf{Cat}_0$ is the identity functor. Let us assume the category~$\mathbf{Pol}_n$ of $n$-polygraphs and the functor~$\mathcal{F}_n$ from $\mathbf{Pol}_n$ to~$\mathbf{Cat}_n$ are defined. The category~$\mathbf{Pol}_{n+1}$ is defined by the following pullback diagram:
\begin{gather*}
\begin{array}{c}
\tikz[scale=0.9]{
\node at (0,0) {$\mathbf{Pol}_{n+1}$};
\draw[color=black, ->] (0.7,0) -- (5,0);
\node at (5.9,0) {$\mathbf{Grph}_{n+1}$};
\draw[color=black, ->] (5.7,-0.5) -- (5.7,-2);
\node at (5.7,-2.5) {$\mathbf{Grph}_n$};
\draw[color=black, ->] (0,-0.5) -- (0,-2);
\node at (0,-2.5) {$\mathbf{Pol}_n$};
\draw[color=black, ->] (0.5,-2.5) -- (2,-2.5);
\node at (2.5,-2.5) {$\mathbf{Cat}_n$};
\draw[color=black, ->] (3,-2.5) -- (5,-2.5);
\draw[color=black] (0.3,-0.5) -- (0.55,-0.5) -- (0.55,-0.25);
\node at (4,-3) {$\mathcal{U}_n$};
\node at (1,-3) {$\mathcal{F}_n$};
\node at (6.2,-1.25) {$\mathcal{U}_n^G$};
\node at (-0.5,-1.25) {$\mathcal{U}_n^P$};
\node at (2.85,0.5) {$\mathcal{U}_{n+1}^{GP}$};
} \end{array}
\end{gather*}
We denote by $\mathcal{F}_{n+1}^P$ the unique functor making the following diagram commutative:
\begin{gather*}
\begin{array}{c}
\tikz[scale=0.9]{
\node at (-3,2) {$\mathbf{Pol}_{n+1}$};
\draw[color=black, ->] (-2.3,1.5) -- (-0.6,0.4);
\draw[color=black, ->] (-3,1.5) -- (-3,-2);
\node at (-3,-2.5) {$\mathbf{Pol}_n$};
\draw[color=black, ->] (-2.3,-2.5) -- (-0.5,-2.5);
\node at (-1.4,-3) {$\mathcal{F}_n$};
\draw[color=black, ->] (-2.3,2) -- (2,0.4);
\node at (-1.45,0.45) {$\mathcal{F}_{n+1}^P$};
\node at (0,0) {$\mathbf{Cat}_n^+$};
\draw[color=black, ->] (0.5,0) -- (2,0);
\node at (2.9,0) {$\mathbf{Grph}_{n+1}$};
\draw[color=black, ->] (2.7,-0.5) -- (2.7,-2);
\node at (2.7,-2.5) {$\mathbf{Grph}_n$};
\draw[color=black, ->] (0,-0.5) -- (0,-2);
\node at (0,-2.5) {$\mathbf{Cat}_n$};
\draw[color=black, ->] (0.5,-2.5) -- (2,-2.5);
\node at (2.7,-2.5) {$\mathbf{Grph}_n$};
\draw[color=black] (0.3,-0.5) -- (0.55,-0.5) -- (0.55,-0.25);
\node at (1.25,-3) {$\mathcal{U}_n$};
\node at (3.2,-1.25) {$\mathcal{U}_n^G$};
\node at (-3.5,-0.25) {$\mathcal{U}_n^P$};
\node at (0.5,1.45) {$\mathcal{U}_{n+1}^{GP}$};
} \end{array}
\end{gather*}
The functor~$\mathcal{F}_{n+1}$ is defined as the following composite:
\begin{gather*}
\begin{array}{c}
\tikz[scale=0.9]{
\node at (0,0) {$\mathbf{Pol}_{n+1}$};
\draw[color=black, ->] (0.7,0) -- (2.7,0);
\node at (3.2,0) {$\mathbf{Cat}_n^+$};
\draw[color=black, ->] (3.7,0) -- (5.7,0);
\node at (6.4,0) {$\mathbf{Cat}_{n+1}$};
\node at (1.7,0.5) {$\mathcal{F}_{n+1}^P$};
\node at (4.7,0.5) {$\mathcal{F}_{n+1}^W$};
} \end{array}
\end{gather*}
\end{definition}

A 0-polygraph is a set. The free 0-category over a 0-polygraph is the 0-polygraph itself.

Given an $n$-polygraph~$\Sigma$, we denote by $\Sigma^*$ the free~$n$-category over~$\Sigma$. Let us assume $n$-polygraphs and free~$n$-categories over~$n$-polygraphs are defined. An $(n+1)$-polygraph is the data of an $n$-polygraph~$\Sigma$:
\begin{gather*}
\begin{array}{c}
\tikz[scale=0.5]{
\draw[color=black, ->] (0,0.5) -- (0,2.5);
\node at (0,0) {$\Sigma_0$};
\node at (0,3) {$\Sigma_0^*$};
\draw[color=black, ->] (3.5,0.3) -- (0.5,2.8);
\draw[color=black, ->] (3.5,0.7) -- (0.5,3.2);
\node at (2,1) {$s_0$};
\node at (2,2.5) {$t_0$};
\node at (4,0) {$\Sigma_1$};
\draw[color=black, ->] (4,0.5) -- (4,2.5);
\node at (4,3) {$\Sigma_1^*$};
\draw[color=black, ->] (7.5,0.3) -- (4.5,2.8);
\draw[color=black, ->] (7.5,0.7) -- (4.5,3.2);
\node at (6,1) {$s_1$};
\node at (6,2.5) {$t_1$};
\node at (8,0) {$\cdots$};
\node at (8,3) {$\cdots$};
\draw[color=black, ->] (11.5,0.3) -- (8.5,2.8);
\draw[color=black, ->] (11.5,0.7) -- (8.5,3.2);
\node at (9.2,1) {$s_{n-2}$};
\node at (10.2,2.5) {$t_{n-2}$};
\node at (12.3,0) {$\Sigma_{n-1}$};
\draw[color=black, ->] (12,0.5) -- (12,2.5);
\node at (11.8,3) {$\Sigma_{n-1}^*$};
\draw[color=black, ->] (15.7,0.3) -- (12.7,2.8);
\draw[color=black, ->] (15.7,0.7) -- (12.7,3.2);
\node at (13.4,1) {$s_{n-1}$};
\node at (14.4,2.5) {$t_{n-1}$};
\node at (16.2,0) {$\Sigma_n$};
} \end{array}
\end{gather*}
and a globular extension~$\Sigma_{n+1}$ of the free~$n$-category over~$\Sigma$:
\begin{gather*}
\begin{array}{c}
\tikz[scale=0.5]{
\draw[color=black, ->] (0,0.5) -- (0,2.5);
\node at (0,0) {$\Sigma_0$};
\node at (0,3) {$\Sigma_0^*$};
\draw[color=black, ->] (3.5,0.3) -- (0.5,2.8);
\draw[color=black, ->] (3.5,0.7) -- (0.5,3.2);
\node at (2,1) {$s_0$};
\node at (2,2.5) {$t_0$};
\node at (4,0) {$\Sigma_1$};
\draw[color=black, ->] (4,0.5) -- (4,2.5);
\node at (4,3) {$\Sigma_1^*$};
\draw[color=black, ->] (7.5,0.3) -- (4.5,2.8);
\draw[color=black, ->] (7.5,0.7) -- (4.5,3.2);
\node at (6,1) {$s_1$};
\node at (6,2.5) {$t_1$};
\node at (8,0) {$\cdots$};
\node at (8,3) {$\cdots$};
\draw[color=black, ->] (11.5,0.3) -- (8.5,2.8);
\draw[color=black, ->] (11.5,0.7) -- (8.5,3.2);
\node at (9.2,1) {$s_{n-2}$};
\node at (10.2,2.5) {$t_{n-2}$};
\node at (12.3,0) {$\Sigma_{n-1}$};
\draw[color=black, ->] (12,0.5) -- (12,2.5);
\node at (11.8,3) {$\Sigma_{n-1}^*$};
\draw[color=black, ->] (15.7,0.3) -- (12.7,2.8);
\draw[color=black, ->] (15.7,0.7) -- (12.7,3.2);
\node at (13.4,1) {$s_{n-1}$};
\node at (14.4,2.5) {$t_{n-1}$};
\node at (16.2,0) {$\Sigma_n$};
\draw[color=black, ->] (16.2,0.5) -- (16.2,2.5);
\node at (16.2,3) {$\Sigma_n^*$};
\node at (20.5,0) {$\Sigma_{n+1}$};
\draw[color=black, ->] (19.7,0.3) -- (16.7,2.8);
\draw[color=black, ->] (19.7,0.7) -- (16.7,3.2);
\node at (18.2,1) {$s_n$};
\node at (18.2,2.5) {$t_n$};
} \end{array}
\end{gather*}

The elements of $\Sigma_k$ are called~\emph{$k$-cells}.

\subsection{Linear~$(n,p)$-polygraphs}

In this section, we introduce linear~$(n,p)$-polygraphs.

\begin{definition} The category~$\mathbf{LinCat}^+_{n,p}$ of linear~$(n,p)$-categories with a globular extension is defined by the following pullback diagram:
\begin{gather*}
\begin{array}{c}
\tikz[scale=0.9]{
\node at (-0.6,0) {$\mathbf{LinCat}^+_{n,p}$};
\draw[color=black, ->] (0.5,0) -- (2,0);
\node at (2.9,0) {$\mathbf{Grph}_{n+1}$};
\draw[color=black, ->] (2.7,-0.5) -- (2.7,-2);
\node at (2.7,-2.5) {$\mathbf{Grph}_n$};
\draw[color=black, ->] (0,-0.5) -- (0,-2);
\node at (-0.6,-2.5) {$\mathbf{LinCat}_{n,p}$};
\draw[color=black, ->] (0.5,-2.5) -- (2,-2.5);
\node at (2.7,-2.5) {$\mathbf{Grph}_n$};
\draw[color=black] (0.3,-0.5) -- (0.55,-0.5) -- (0.55,-0.25);
\node at (1.25,-3) {$\mathcal{U}_{n,p}$};
\node at (3.2,-1.25) {$\mathcal{U}_n^G$};
} \end{array}
\end{gather*}
\end{definition}

\begin{definition} The free linear~$(0,0)$-category~$\Sigma^\ell$ over the 0-polygraph~$\Sigma$ is the vector space spanned by $\Sigma$. For~$n>0$, the free linear~$(n,n)$-category over the $n$-polygraph~$\Sigma$ is the linear~$(n,n)$-category~$\Sigma^\ell$ such that~$\Sigma^\ell_k=\Sigma^*_k$ for~$0 \leqslant k<n$ and~$\Sigma^\ell_n(u,v)$ is the vector space spanned by $\Sigma^*(u,v)$ for each $(n-1)$-cells $u$ and~$v$.
\end{definition}

\begin{definition} $\mathbf{LinPol}_{n,n}$ is the category of $n$-polygraphs and the functor~$\mathcal{F}_{n,n}$ from $\mathbf{LinPol}_{n,n}$ to~$\mathbf{LinCat}_{n,n}$ is the free functor from $\mathbf{LinPol}_{n,n}$ to~$\mathbf{LinCat}_{n,n}$. Let us assume the category~$\mathbf{LinPol}_{n,p}$ of linear~$(n,p)$-polygraphs and the functor~$\mathcal{F}_{n,p}$ from $\mathbf{LinPol}_{n,p}$ to~$\mathbf{LinCat}_{n,p}$ are defined. The category~$\mathbf{LinPol}_{n+1,p}$ is defined by the following pullback diagram:
\begin{gather*}
\begin{array}{c}
\tikz[scale=0.9]{
\node at (-0.5,0) {$\mathbf{LinPol}_{n+1,p}$};
\draw[color=black, ->] (0.7,0) -- (5,0);
\node at (5.9,0) {$\mathbf{Grph}_{n+1}$};
\draw[color=black, ->] (5.7,-0.5) -- (5.7,-2);
\node at (5.7,-2.5) {$\mathbf{Grph}_n$};
\draw[color=black, ->] (0,-0.5) -- (0,-2);
\node at (-0.5,-2.5) {$\mathbf{LinPol}_{n,p}$};
\draw[color=black, ->] (0.5,-2.5) -- (1.6,-2.5);
\node at (2.5,-2.5) {$\mathbf{LinCat}_{n,p}$};
\draw[color=black, ->] (3.4,-2.5) -- (5,-2.5);
\draw[color=black] (0.3,-0.5) -- (0.55,-0.5) -- (0.55,-0.25);
\node at (4,-3) {$\mathcal{U}_{n,p}$};
\node at (1,-3) {$\mathcal{F}_{n,p}$};
\node at (6.2,-1.25) {$\mathcal{U}_n^G$};
\node at (-0.5,-1.25) {$\mathcal{U}_{n,p}^P$};
\node at (2.85,0.5) {$\mathcal{U}_{n+1,p}^{GP}$};
} \end{array}
\end{gather*}
We denote by $\mathcal{F}_{n+1,p}^P$ the unique functor making the following diagram commutative:
\begin{gather*}
\begin{array}{c}
\tikz[scale=0.9]{
\node at (-0.6,0) {$\mathbf{LinCat}^+_{n,p}$};
\draw[color=black, ->] (0.5,0) -- (2,0);
\node at (2.9,0) {$\mathbf{Grph}_{n+1}$};
\draw[color=black, ->] (2.7,-0.5) -- (2.7,-2);
\node at (2.7,-2.5) {$\mathbf{Grph}_n$};
\draw[color=black, ->] (0,-0.5) -- (0,-2);
\node at (-0.6,-2.5) {$\mathbf{LinCat}_{n,p}$};
\draw[color=black, ->] (0.5,-2.5) -- (2,-2.5);
\node at (2.7,-2.5) {$\mathbf{Grph}_n$};
\draw[color=black] (0.3,-0.5) -- (0.55,-0.5) -- (0.55,-0.25);
\node at (1.25,-3) {$\mathcal{U}_{n,p}$};
\node at (3.2,-1.25) {$\mathcal{U}_n^G$};
\node at (-4,2) {$\mathbf{LinPol}_{n+1,p}$};
\draw[color=black, ->] (-2.6,1.5) -- (-0.6,0.4);
\draw[color=black, ->] (-4,1.5) -- (-4,-2);
\node at (-4,-2.5) {$\mathbf{LinPol}_{n,p}$};
\draw[color=black, ->] (-2.8,2) -- (2,0.4);
\node at (-2,0.6) {$\mathcal{F}_{n+1,p}^P$};
\draw[color=black, ->] (-3.1,-2.5) -- (-1.5,-2.5);
\node at (-2.4,-3) {$\mathcal{F}_{n,p}$};
\node at (-4.5,-0.25) {$\mathcal{U}_{n,p}^P$};
\node at (0.5,1.45) {$\mathcal{U}_{n+1,p}^{GP}$};
} \end{array}
\end{gather*}
The functor~$\mathcal{F}_{n+1,p}$ is defined as the following composite:
\begin{gather*}
\begin{array}{c}
\tikz[scale=0.9]{
\node at (-1,0) {$\mathbf{LinPol}_{n+1,p}$};
\draw[color=black, ->] (0.2,0) -- (2.2,0);
\node at (3.2,0) {$\mathbf{LinCat}_{n,p}^+$};
\draw[color=black, ->] (4,0) -- (6.2,0);
\node at (7.4,0) {$\mathbf{LinCat}_{n+1,p}$};
\node at (1.3,0.5) {$\mathcal{F}_{n+1,p}^P$};
\node at (5.1,0.5) {$\mathcal{F}_{n+1,p}^W$};
} \end{array}
\end{gather*}
\end{definition}

\begin{definition} Let $0 \leqslant p \leqslant n$. The free linear~$(n+1,p)$-category over a linear~$(n,p)$-category with globular extension~$(\mathcal{C},\Gamma)$ is the linear~$(n+1,p)$-category having~$\mathcal{C}$ as an underlying linear~$(n,p)$-category and which~$(n+1)$-cells are defined this way:
\begin{itemize}
\item we construct the set $A_1$ of all $(n+1)$-cells of the form $1_{u_1} \star_n \cdots  \star_0 \alpha \star_0 \cdots  \star_n 1_{u_{2n}}$ where each $u_k$ is in $\mathcal{C}_n$ and~$\alpha$ is in $\Gamma$,
\item we define the set of all formal $n$-compositions of elements of $A_1$ quotiented by the exchange relations (\ref{exch}) to obtain a set $A_2$,
\item if~$p=0$, the $(n+1)$-cells of the free linear~$(n+1,p)$-category are the linear combinations of elements of $A_2$ quotiented by Relations (\ref{idlin}), (\ref{complin1}) and (\ref{complin2}). if~$p>0$, the $(n+1)$-cells of the free linear~$(n+1,p)$-category are the linear combinations of $(p-1)$-composable elements of $A_2$ quotiented by Relations (\ref{idlin}), (\ref{complin1}) and (\ref{complin2}).
\end{itemize}
\end{definition}

A linear~$(n,n)$-polygraph is an $n$-polygraph. Given an $n$-polygraph~$\Sigma$, we denote by $\Sigma^*$ the free~$n$-category over~$\Sigma$. Let us assume linear~$(n,p)$-polygraphs are defined. a linear~$(n+1,p)$-polygraph  is the data of a linear~$(n,p)$-polygraph~$\Sigma$:
\begin{gather*}
\begin{array}{c}
\tikz[scale=0.5]{
\draw[color=black, ->] (0,0.5) -- (0,2.5);
\node at (0,0) {$\Sigma_0$};
\node at (0,3) {$\Sigma_0^*$};
\draw[color=black, ->] (3.5,0.3) -- (0.5,2.8);
\draw[color=black, ->] (3.5,0.7) -- (0.5,3.2);
\node at (2,1) {$s_0$};
\node at (2,2.5) {$t_0$};
\node at (4,0) {$\Sigma_1$};
\draw[color=black, ->] (4,0.5) -- (4,2.5);
\node at (4,3) {$\Sigma_1^*$};
\draw[color=black, ->] (7.5,0.3) -- (4.5,2.8);
\draw[color=black, ->] (7.5,0.7) -- (4.5,3.2);
\node at (6,1) {$s_1$};
\node at (6,2.5) {$t_1$};
\node at (8,0) {$\cdots$};
\node at (8,3) {$\cdots$};
\draw[color=black, ->] (11.5,0.3) -- (8.5,2.8);
\draw[color=black, ->] (11.5,0.7) -- (8.5,3.2);
\node at (9.2,1) {$s_{n-2}$};
\node at (10.2,2.5) {$t_{n-2}$};
\node at (12.3,0) {$\Sigma_{n-1}$};
\draw[color=black, ->] (12,0.5) -- (12,2.5);
\node at (11.8,3) {$\Sigma_{n-1}^\ell$};
\draw[color=black, ->] (15.7,0.3) -- (12.7,2.8);
\draw[color=black, ->] (15.7,0.7) -- (12.7,3.2);
\node at (13.4,1) {$s_{n-1}$};
\node at (14.4,2.5) {$t_{n-1}$};
\node at (16.2,0) {$\Sigma_n$};
} \end{array}
\end{gather*}
and a globular extension~$\Sigma_{n+1}$ of the free~$(n,p)$-category~$\Sigma^\ell$ over~$\Sigma$:
\begin{gather*}
\begin{array}{c}
\tikz[scale=0.5]{
\draw[color=black, ->] (0,0.5) -- (0,2.5);
\node at (0,0) {$\Sigma_0$};
\node at (0,3) {$\Sigma_0^*$};
\draw[color=black, ->] (3.5,0.3) -- (0.5,2.8);
\draw[color=black, ->] (3.5,0.7) -- (0.5,3.2);
\node at (2,1) {$s_0$};
\node at (2,2.5) {$t_0$};
\node at (4,0) {$\Sigma_1$};
\draw[color=black, ->] (4,0.5) -- (4,2.5);
\node at (4,3) {$\Sigma_1^*$};
\draw[color=black, ->] (7.5,0.3) -- (4.5,2.8);
\draw[color=black, ->] (7.5,0.7) -- (4.5,3.2);
\node at (6,1) {$s_1$};
\node at (6,2.5) {$t_1$};
\node at (8,0) {$\cdots$};
\node at (8,3) {$\cdots$};
\draw[color=black, ->] (11.5,0.3) -- (8.5,2.8);
\draw[color=black, ->] (11.5,0.7) -- (8.5,3.2);
\node at (9.2,1) {$s_{n-2}$};
\node at (10.2,2.5) {$t_{n-2}$};
\node at (12.3,0) {$\Sigma_{n-1}$};
\draw[color=black, ->] (12,0.5) -- (12,2.5);
\node at (11.8,3) {$\Sigma_{n-1}^\ell$};
\draw[color=black, ->] (15.7,0.3) -- (12.7,2.8);
\draw[color=black, ->] (15.7,0.7) -- (12.7,3.2);
\node at (13.4,1) {$s_{n-1}$};
\node at (14.4,2.5) {$t_{n-1}$};
\node at (16.2,0) {$\Sigma_n$};
\draw[color=black, ->] (16.2,0.5) -- (16.2,2.5);
\node at (16.2,3) {$\Sigma_n^\ell$};
\node at (20.5,0) {$\Sigma_{n+1}$};
\draw[color=black, ->] (19.7,0.3) -- (16.7,2.8);
\draw[color=black, ->] (19.7,0.7) -- (16.7,3.2);
\node at (18.2,1) {$s_n$};
\node at (18.2,2.5) {$t_n$};
} \end{array}
\end{gather*}

\section{Linear rewriting}

\subsection{Higher dimensional monomials}

We will consider~$n>0$ for this section.

In 2-categories, 2-cells can be represented as planar diagrams with an upper boundary and a lower boundary. The upper boundary will correspond to the 1-source of the 2-cell and the lower boundary will correspond to its 1-target. A generating 2-cell with 1-source $a$ and 1-target $b$ will be pictured as follows:
\begin{gather*}
\begin{array}{c}
\tikz[scale=0.5]{
\draw[color=black] (-0.5,-0.5) -- (-0.5,0.5) -- (0.5,0.5) -- (0.5,-0.5) -- (-0.5,-0.5);
\draw[color=black] (0.4,-0.5) -- (0.4,-1);
\draw[color=black] (-0.4,-0.5) -- (-0.4,-1);
\draw[color=black] (0.4,0.5) -- (0.4,1);
\draw[color=black] (-0.4,0.5) -- (-0.4,1);
\node at (0,0.75) {...};
\node at (0,-0.75) {...};
\node at (0,1.4) {a};
\node at (0,-1.4) {b};
} \end{array}
\end{gather*}
The 0-composition~$\alpha \star_0 \beta$ will be represented by horizontal concatenation
\begin{gather*}
\begin{array}{c}
\tikz[scale=0.5]{
\draw[color=black] (-0.5,-0.5) -- (-0.5,0.5) -- (0.5,0.5) -- (0.5,-0.5) -- (-0.5,-0.5);
\draw[color=black] (0.4,-0.5) -- (0.4,-1);
\draw[color=black] (-0.4,-0.5) -- (-0.4,-1);
\draw[color=black] (0.4,0.5) -- (0.4,1);
\draw[color=black] (-0.4,0.5) -- (-0.4,1);
\node at (0,0.75) {...};
\node at (0,-0.75) {...};
\node at (0,0) {$\alpha$};
\draw[color=black] (1,-0.5) -- (1,0.5) -- (2,0.5) -- (2,-0.5) -- (1,-0.5);
\draw[color=black] (1.9,-0.5) -- (1.9,-1);
\draw[color=black] (1.1,-0.5) -- (1.1,-1);
\draw[color=black] (1.9,0.5) -- (1.9,1);
\draw[color=black] (1.1,0.5) -- (1.1,1);
\node at (1.5,0.75) {...};
\node at (1.5,-0.75) {...};
\node at (1.5,0) {$\beta$};
} \end{array}
\end{gather*}
and the 1-composition~$\alpha \star_1 \beta$ will be represented by vertical concatenation
\begin{gather*}
\begin{array}{c}
\tikz[scale=0.5]{
\draw[color=black] (-0.5,-0.5) -- (-0.5,0.5) -- (0.5,0.5) -- (0.5,-0.5) -- (-0.5,-0.5);
\draw[color=black] (0.4,-0.5) -- (0.4,-1);
\draw[color=black] (-0.4,-0.5) -- (-0.4,-1);
\draw[color=black] (0.4,0.5) -- (0.4,1);
\draw[color=black] (-0.4,0.5) -- (-0.4,1);
\node at (0,0.75) {...};
\node at (0,0) {$\alpha$};
\draw[color=black] (-0.5,-2.5) -- (-0.5,-1.5) -- (0.5,-1.5) -- (0.5,-2.5) -- (-0.5,-2.5);
\draw[color=black] (0.4,-2.5) -- (0.4,-3);
\draw[color=black] (-0.4,-2.5) -- (-0.4,-3);
\draw[color=black] (0.4,-1.5) -- (0.4,-1);
\draw[color=black] (-0.4,-1.5) -- (-0.4,-1);
\node at (0,-1) {...};
\node at (0,-2.75) {...};
\node at (0,-2) {$\beta$};
} \end{array}
\end{gather*}
The exchange relation is diagramatically represented by:
\begin{gather*}
\begin{array}{c}
\tikz[scale=0.5]{
\draw[color=black] (-0.5,-0.5) -- (-0.5,0.5) -- (0.5,0.5) -- (0.5,-0.5) -- (-0.5,-0.5);
\draw[color=black] (0.4,-0.5) -- (0.4,-2);
\draw[color=black] (-0.4,-0.5) -- (-0.4,-2);
\draw[color=black] (0.4,0.5) -- (0.4,1);
\draw[color=black] (-0.4,0.5) -- (-0.4,1);
\node at (0,0.75) {...};
\node at (0,-1.25) {...};
\node at (0,0) {$\alpha$};
\draw[color=black] (1,-1.5) -- (1,-0.5) -- (2,-0.5) -- (2,-1.5) -- (1,-1.5);
\draw[color=black] (1.9,-1.5) -- (1.9,-2);
\draw[color=black] (1.1,-1.5) -- (1.1,-2);
\draw[color=black] (1.9,-0.5) -- (1.9,1);
\draw[color=black] (1.1,-0.5) -- (1.1,1);
\node at (1.5,0.25) {...};
\node at (1.5,-1.75) {...};
\node at (1.5,-1) {$\beta$};
} \end{array}
=
\begin{array}{c}
\tikz[scale=0.5]{
\draw[color=black] (-0.5,-0.5) -- (-0.5,0.5) -- (0.5,0.5) -- (0.5,-0.5) -- (-0.5,-0.5);
\draw[color=black] (0.4,-0.5) -- (0.4,-1);
\draw[color=black] (-0.4,-0.5) -- (-0.4,-1);
\draw[color=black] (0.4,0.5) -- (0.4,2);
\draw[color=black] (-0.4,0.5) -- (-0.4,2);
\node at (0,1.25) {...};
\node at (0,-0.75) {...};
\node at (0,0) {$\alpha$};
\draw[color=black] (1,0.5) -- (1,1.5) -- (2,1.5) -- (2,0.5) -- (1,0.5);
\draw[color=black] (1.9,0.5) -- (1.9,-1);
\draw[color=black] (1.1,0.5) -- (1.1,-1);
\draw[color=black] (1.9,1.5) -- (1.9,2);
\draw[color=black] (1.1,1.5) -- (1.1,2);
\node at (1.5,1.75) {...};
\node at (1.5,-0.25) {...};
\node at (1.5,1) {$\beta$};
} \end{array}
\end{gather*}
If a 2-cell $\alpha$ verifies $s_1(\alpha)=t_1(\alpha)=a$, we use for any $k \in \mathbb{N}$ the following notations:
\begin{equation}\label{convention}
\begin{array}{c}
\tikz[scale=0.5]{
\draw[color=black] (-0.5,-0.5) -- (-0.5,0.5) -- (0.5,0.5) -- (0.5,-0.5) -- (-0.5,-0.5);
\draw[color=black] (0.4,-0.5) -- (0.4,-1);
\draw[color=black] (-0.4,-0.5) -- (-0.4,-1);
\draw[color=black] (0.4,0.5) -- (0.4,1);
\draw[color=black] (-0.4,0.5) -- (-0.4,1);
\node at (0,0.75) {...};
\node at (0,-0.75) {...};
\node at (0,0) {$\alpha$};
\node at (0.9,0.6) {0};
} \end{array}
=1_a~, \qquad
\begin{array}{c}
\tikz[scale=0.5]{
\draw[color=black] (-0.5,-0.5) -- (-0.5,0.5) -- (0.5,0.5) -- (0.5,-0.5) -- (-0.5,-0.5);
\draw[color=black] (0.4,-0.5) -- (0.4,-1);
\draw[color=black] (-0.4,-0.5) -- (-0.4,-1);
\draw[color=black] (0.4,0.5) -- (0.4,1);
\draw[color=black] (-0.4,0.5) -- (-0.4,1);
\node at (0,0.75) {...};
\node at (0,-0.75) {...};
\node at (0,0) {$\alpha$};
\node at (1.6,0.6) {$k+1$};
} \end{array}
=
\begin{array}{c}
\tikz[scale=0.5]{
\draw[color=black] (-0.5,-0.5) -- (-0.5,0.5) -- (0.5,0.5) -- (0.5,-0.5) -- (-0.5,-0.5);
\draw[color=black] (0.4,-0.5) -- (0.4,-1);
\draw[color=black] (-0.4,-0.5) -- (-0.4,-1);
\draw[color=black] (0.4,0.5) -- (0.4,1);
\draw[color=black] (-0.4,0.5) -- (-0.4,1);
\node at (0,0.75) {...};
\node at (0,0) {$\alpha$};
\draw[color=black] (-0.5,-2.5) -- (-0.5,-1.5) -- (0.5,-1.5) -- (0.5,-2.5) -- (-0.5,-2.5);
\draw[color=black] (0.4,-2.5) -- (0.4,-3);
\draw[color=black] (-0.4,-2.5) -- (-0.4,-3);
\draw[color=black] (0.4,-1.5) -- (0.4,-1);
\draw[color=black] (-0.4,-1.5) -- (-0.4,-1);
\node at (0,-1) {...};
\node at (0,-2.75) {...};
\node at (0,-2) {$\alpha$};
\node at (0.9,0.6) {$k$};
} \end{array}
\end{equation}

In linear~$(2,2)$-categories, linear combinations of such diagrams will be used to represent 2-cells. Such a representation will be used later.

\begin{example} Let $\mathcal{C}$ be a linear~$(2,2)$-category with only one 0-cell, one 1-cell~$x$ from and~$\alpha$ a 2-cell from $x$ to~$xx$. By representing~$1_x$ this way:
\[
\begin{tikzpicture} \begin{scope} [ x = 10pt, y = 10pt, join = round, cap = round ] \draw (0.00,0.50)--(0.00,0.00) ;  \draw (0.00,0.00)--(0.00,-0.50) ; \end{scope} \end{tikzpicture}
\]
we can represent $\alpha$ by:
\[
\begin{tikzpicture} \begin{scope} [ x = 10pt, y = 10pt, join = round, cap = round ] \draw (0.50,0.75)--(0.50,0.50) ; \draw [fill = lightgray] (0.50,0.50)--(1.00,0.00)--(0.00,0.00)--cycle ; \draw (0.00,0.00)--(0.00,-0.25) (1.00,0.00)--(1.00,-0.25) ; \end{scope} \end{tikzpicture}
\]
Then, the following diagrams:
\[
\begin{tikzpicture} \begin{scope} [ x = 10pt, y = 10pt, join = round, cap = round ] \draw (0.50,0.75)--(0.50,0.50) (2.50,0.75)--(2.50,0.50) ; \draw [fill = lightgray] (0.50,0.50)--(1.00,0.00)--(0.00,0.00)--cycle ; \draw [fill = lightgray] (2.50,0.50)--(3.00,0.00)--(2.00,0.00)--cycle ; \draw (0.00,0.00)--(0.00,-0.25) (1.00,0.00)--(1.00,-0.25) (2.00,0.00)--(2.00,-0.25) (3.00,0.00)--(3.00,-0.25) ; \end{scope} \end{tikzpicture}~, \qquad \begin{tikzpicture} \begin{scope} [ x = 10pt, y = 10pt, join = round, cap = round ] \draw (0.75,1.25)--(0.75,1.00) ; \draw [fill = lightgray] (0.75,1.00)--(1.50,0.50)--(0.00,0.50)--cycle ; \draw (0.00,0.50)--(0.00,0.00) (1.50,0.50)--(1.50,0.25) ;  \draw [fill = lightgray] (1.50,0.25)--(2.00,-0.25)--(1.00,-0.25)--cycle ; \draw (0.00,0.00)--(0.00,-0.50) (1.00,-0.25)--(1.00,-0.50) (2.00,-0.25)--(2.00,-0.50) ; \end{scope} \end{tikzpicture}+\begin{tikzpicture} \begin{scope} [ x = 10pt, y = 10pt, join = round, cap = round ] \draw (1.25,1.50)--(1.25,1.25) ; \draw [fill = lightgray] (1.25,1.25)--(2.00,0.75)--(0.50,0.75)--cycle ; \draw (0.50,0.75)--(0.50,0.50) (2.00,0.75)--(2.00,0.25) ; \draw [fill = lightgray] (0.50,0.50)--(1.00,0.00)--(0.00,0.00)--cycle ;  \draw (0.00,0.00)--(0.00,-0.25) (1.00,0.00)--(1.00,-0.25) (2.00,0.25)--(2.00,-0.25) ; \end{scope} \end{tikzpicture}
\]
respectively correspond to~$\alpha \star_0 \alpha$ and~$\alpha \star_1 (1_x \star_0 \alpha)+\alpha \star_1 (\alpha  \star_0 1_x)$.
\end{example}

\begin{definition} Let $\Sigma$ be a linear~$(n,n-1)$-polygraph. A monomial of $\Sigma$ is a $(n-1)$-cell of the free~$(n-1)$-category~$\Sigma_{(n-1)}^*$. We say $\Sigma$ is \emph{left-monomial} if all elements of $\Sigma_n$ have monomial source.
\end{definition}

\begin{remark} Let $\Sigma$ be a linear~$(n,n-1)$-polygraph. Any $n$-cell of $\Sigma^\ell$ has a unique decomposition as a linear combination of monomials.
\end{remark}

\begin{definition} Let $\Sigma$ be an $n$-polygraph. A monomial order on~$\Sigma$ is a well-founded order~$\prec$ on~$\Sigma_n^*$ such that:
\begin{itemize}
\item for any $0 \leqslant k<n$ and any $n$-cells $a$,~$b$,~$u$ and~$v$, we have~$a \star_k u \star_k b \prec a \star_k v \star_k b$ if~$u \prec v$ whenever this makes sense,
\item $\prec$ is strict on~$\Sigma_n^*(a,b)$ for each $(n-1)$-cells $a$ and~$b$.
\end{itemize}
\end{definition}

An $n$-polygraph does not always have a monomial order.

\begin{example} Let $\Sigma$ be a 2-polygraph with only one 0-cell, one 1-cell and two 2-cells represented by:
\begin{gather*}
\begin{array}{c}
\tikz[scale=0.5]{
\draw[color=black] (0,0) arc (0:180:0.5);
\draw[color=black] (0,-0.5) -- (0,0);
\draw[color=black] (-1,-0.5) -- (-1,0);
} \end{array}~, \qquad
\begin{array}{c}
\tikz[scale=0.5]{
\draw[color=black] (0,0) arc (360:180:0.5);
\draw[color=black] (0,0.5) -- (0,0);
\draw[color=black] (-1,0.5) -- (-1,0);
} \end{array}
\end{gather*}

If there is a monomial order~$\prec$ on~$\Sigma$, we have one of the following inequalities:

\begin{gather*}
\begin{array}{c}
\tikz[scale=0.5]{
\draw[color=black] (0,-1) -- (0,1);
\draw[color=black] (1,0) circle (0.5);
} \end{array}
\prec
\begin{array}{c}
\tikz[scale=0.5]{
\draw[color=black] (0,-1) -- (0,1);
\draw[color=black] (-1,0) circle (0.5);
} \end{array}
\qquad
\text{or}
\qquad
\begin{array}{c}
\tikz[scale=0.5]{
\draw[color=black] (0,-1) -- (0,1);
\draw[color=black] (-1,0) circle (0.5);
} \end{array}
\prec
\begin{array}{c}
\tikz[scale=0.5]{
\draw[color=black] (0,-1) -- (0,1);
\draw[color=black] (1,0) circle (0.5);
} \end{array}
\end{gather*}

Let us assume the first inequality is true, the other case being symmetric. Then, we have:

\begin{gather*}
\begin{array}{c}
\tikz[scale=0.5]{
\draw[color=black] (0,-1) -- (0,1);
\draw[color=black] (-2,-1) -- (-2,1);
\draw[color=black] (-1,0) circle (0.5);
\draw[color=black] (0,1) arc (0:180:1);
} \end{array}
\prec
\begin{array}{c}
\tikz[scale=0.5]{
\draw[color=black] (0,-1) -- (0,0);
\draw[color=black] (-1,-1) -- (-1,0);
\draw[color=black] (-2,0) circle (0.5);
\draw[color=black] (0,0) arc (0:180:0.5);
} \end{array}
=
\begin{array}{c}
\tikz[scale=0.5]{
\draw[color=black] (0,-1) -- (0,0);
\draw[color=black] (-1,-1) -- (-1,0);
\draw[color=black] (1,0) circle (0.5);
\draw[color=black] (0,0) arc (0:180:0.5);
} \end{array}
\prec
\begin{array}{c}
\tikz[scale=0.5]{
\draw[color=black] (0,-1) -- (0,1);
\draw[color=black] (-2,-1) -- (-2,1);
\draw[color=black] (-1,0) circle (0.5);
\draw[color=black] (0,1) arc (0:180:1);
} \end{array}
\end{gather*}
which contradicts the existence of a monomial order~$\Sigma_2^*$.
\end{example}

\subsection{Rewriting in linear~$(2,2)$-categories}

We recall that a linear~$(2,2)$-category is a category enriched in linear categories. We will explicit the rewriting systems arising from linear~$(3,2)$-polygraphs. A similar theory exists in the case of $n$-polygraphs \cite{GuiraudMalbos09}. The rewriting theory of linear~$(2,2)$-categories we will present is a linear adaptation of the case of 2-categories.

Let $\Sigma$ be a linear~$(3,2)$-polygraph. The \emph{congruence} generated by $\Sigma$ is the equivalence relation~$\equiv$ on~$\Sigma^\ell_2$ such that~$u \equiv v$ if there is a 3-cell~$\alpha$ in $\Sigma^\ell$ such that~$s_2(\alpha)=u$ and~$t_2(\alpha)=v$. Note that in a linear~$(3,2)$-category, all 3-cells are invertible. if~$\alpha$ is a 3-cell from a 2-cell~$u$ to a 2-cell~$v$, the 3-cell~$1_v+1_u-\alpha$ has 2-source $v$ and 2-target $u$. Let $\Sigma$ be a linear~$(3,2)$-polygraph. A linear~$(2,2)$-category is \emph{presented} by $\Sigma$ if it is isomorphic to the quotient of $\Sigma_2^\ell$ by the congruence generated by $\Sigma$.

We introduce next the notions of rewriting step of a left-monomial linear~$(3,2)$-polygraph to define branchings, termination and conflence in this context. We will fix $\Sigma$ a left-monomial linear~$(3,2)$-polygraph for the remainder of this section.

\begin{definition} A \emph{rewriting step} of $\Sigma$ is a 3-cell of $\Sigma^\ell$ of the form:

$$\lambda m_1 \star_1 (m_2 \star_0 s_2(\alpha) \star_0 m_3) \star_1 m_4 +u \Rrightarrow \lambda m_1 \star_1 (m_2 \star_0 s_2(\alpha) \star_0 m_3) \star_1 m_4 +u$$

\begin{gather*}
\lambda
\begin{array}{c}
\tikz[scale=0.9]{
\draw[color=black] (-2,-1) -- (2,-1) -- (2,-2) -- (-2,-2) -- (-2,-1);
\draw[color=black] (-2,1) -- (2,1) -- (2,2) -- (-2,2) -- (-2,1);
\draw[color=black] (-1.9,-2) -- (-1.9,-2.5);
\draw[color=black] (1.9,-2) -- (1.9,-2.5);
\draw[color=black] (-1.9,2) -- (-1.9,2.5);
\draw[color=black] (1.9,2) -- (1.9,2.5);
\draw[color=black] (-0.5,-0.5) -- (0.5,-0.5) -- (0.5,0.5) -- (-0.5,0.5) -- (-0.5,-0.5);
\draw[color=black]  (-0.4,0.5) -- (-0.4,1);
\draw[color=black]  (0.4,0.5) -- (0.4,1);
\draw[color=black]  (-0.4,-0.5) -- (-0.4,-1);
\draw[color=black]  (0.4,-0.5) -- (0.4,-1);
\draw[color=black] (-2,-0.5) -- (-1,-0.5) -- (-1,0.5) -- (-2,0.5) -- (-2,-0.5);
\draw[color=black]  (-1.9,0.5) -- (-1.9,1);
\draw[color=black]  (-1.1,0.5) -- (-1.1,1);
\draw[color=black]  (-1.9,-0.5) -- (-1.9,-1);
\draw[color=black]  (-1.1,-0.5) -- (-1.1,-1);
\draw[color=black] (2,-0.5) -- (1,-0.5) -- (1,0.5) -- (2,0.5) -- (2,-0.5);
\draw[color=black]  (1.9,0.5) -- (1.9,1);
\draw[color=black]  (1.1,0.5) -- (1.1,1);
\draw[color=black]  (1.9,-0.5) -- (1.9,-1);
\draw[color=black]  (1.1,-0.5) -- (1.1,-1);
\node at (0,0) {$s_2(\alpha)$};
\node at (0,0.75) {$\cdots$};
\node at (0,-0.75) {$\cdots$};
\node at (1.5,0) {$m_3$};
\node at (1.5,0.75) {$\cdots$};
\node at (1.5,-0.75) {$\cdots$};
\node at (-1.5,0) {$m_2$};
\node at (-1.5,0.75) {$\cdots$};
\node at (-1.5,-0.75) {$\cdots$};
\node at (0,1.5) {$m_1$};
\node at (0,2.25) {$\cdots$};
\node at (0,-1.5) {$m_4$};
\node at (0,-2.25) {$\cdots$};
} \end{array}
+u
\Rrightarrow
\lambda
\begin{array}{c}
\tikz[scale=0.9]{
\draw[color=black] (-2,-1) -- (2,-1) -- (2,-2) -- (-2,-2) -- (-2,-1);
\draw[color=black] (-2,1) -- (2,1) -- (2,2) -- (-2,2) -- (-2,1);
\draw[color=black] (-1.9,-2) -- (-1.9,-2.5);
\draw[color=black] (1.9,-2) -- (1.9,-2.5);
\draw[color=black] (-1.9,2) -- (-1.9,2.5);
\draw[color=black] (1.9,2) -- (1.9,2.5);
\draw[color=black] (-0.5,-0.5) -- (0.5,-0.5) -- (0.5,0.5) -- (-0.5,0.5) -- (-0.5,-0.5);
\draw[color=black]  (-0.4,0.5) -- (-0.4,1);
\draw[color=black]  (0.4,0.5) -- (0.4,1);
\draw[color=black]  (-0.4,-0.5) -- (-0.4,-1);
\draw[color=black]  (0.4,-0.5) -- (0.4,-1);
\draw[color=black] (-2,-0.5) -- (-1,-0.5) -- (-1,0.5) -- (-2,0.5) -- (-2,-0.5);
\draw[color=black]  (-1.9,0.5) -- (-1.9,1);
\draw[color=black]  (-1.1,0.5) -- (-1.1,1);
\draw[color=black]  (-1.9,-0.5) -- (-1.9,-1);
\draw[color=black]  (-1.1,-0.5) -- (-1.1,-1);
\draw[color=black] (2,-0.5) -- (1,-0.5) -- (1,0.5) -- (2,0.5) -- (2,-0.5);
\draw[color=black]  (1.9,0.5) -- (1.9,1);
\draw[color=black]  (1.1,0.5) -- (1.1,1);
\draw[color=black]  (1.9,-0.5) -- (1.9,-1);
\draw[color=black]  (1.1,-0.5) -- (1.1,-1);
\node at (0,0) {$t_2(\alpha)$};
\node at (0,0.75) {$\cdots$};
\node at (0,-0.75) {$\cdots$};
\node at (1.5,0) {$m_3$};
\node at (1.5,0.75) {$\cdots$};
\node at (1.5,-0.75) {$\cdots$};
\node at (-1.5,0) {$m_2$};
\node at (-1.5,0.75) {$\cdots$};
\node at (-1.5,-0.75) {$\cdots$};
\node at (0,1.5) {$m_1$};
\node at (0,2.25) {$\cdots$};
\node at (0,-1.5) {$m_4$};
\node at (0,-2.25) {$\cdots$};
} \end{array}
+u
\end{gather*}
where $\alpha$ is in $\Sigma_3$, the $m_i$ are monomials,~$\lambda$ is a nonzero scalar and~$u$ is a 2-cell such that the monomial~$\lambda m_2 \star_1 (m_1 \alpha m_4) \star_1 m_3$ does not appear in the monomial decomposition of $u$. A rewriting sequence of $\Sigma$ is a finite or infinite sequence:
$$u_0 \Rrightarrow \cdots  \Rrightarrow u_n \Rrightarrow \cdots $$
of rewriting steps of $\Sigma$.
\end{definition}

For any 2-cells $u$ and~$v$, we say $u$ rewrites into~$v$ if there is a non-degenerate rewriting sequence from $u$ to~$v$. A 2-cell is a \emph{normal form} if it can not be rewritten. We say $\Sigma$ is normalizing if each 2-cell rewrites into a normal form or is a normal form.

A \emph{branching} of $\Sigma$ is a pair of rewriting sequences of $\Sigma$ with the same 2-source:

\begin{gather*}
\begin{array}{c}
\tikz[scale=0.9]{
\node at (0,0) {$u_0$};
\draw[color=black] (-0.1,0.3) arc (180:90:1.1);
\draw[color=black] (0,0.3) arc (180:90:1);
\draw[color=black] (0.1,0.3) arc (180:90:0.9);
\draw[color=black] (1,1.3) -- (1.1,1.3);
\draw[color=black] (0.8,1.1) -- (1.1,1.3) -- (0.8,1.5);
\node at (2.6,1.3) {$\cdots  \Rrightarrow u_n \Rrightarrow \cdots$};
\draw[color=black] (-0.1,-0.3) arc (180:270:1.1);
\draw[color=black] (0,-0.3) arc (180:270:1);
\draw[color=black] (0.1,-0.3) arc (180:270:0.9);
\draw[color=black] (1,-1.3) -- (1.1,-1.3);
\draw[color=black] (0.8,-1.1) -- (1.1,-1.3) -- (0.8,-1.5);
\node at (2.6,-1.3) {$\cdots  \Rrightarrow u'_n \Rrightarrow \cdots$};
} \end{array}
\end{gather*}

A \emph{local branching} of $\Sigma$ is a pair of rewriting  steps of $\Sigma$ with the same 2-source.

A branching~$(\alpha,\beta)$ is \emph{confluent} if it can be completed
\begin{gather*}
\begin{array}{c}
\tikz[scale=0.9]{
\node at (0,0) {$u$};
\draw[color=black] (-0.1,0.3) arc (180:90:1.1);
\draw[color=black] (0,0.3) arc (180:90:1);
\draw[color=black] (0.1,0.3) arc (180:90:0.9);
\draw[color=black] (1,1.3) -- (1.1,1.3);
\draw[color=black] (0.8,1.1) -- (1.1,1.3) -- (0.8,1.5);
\node at (1.5,1.3) {$u'$};
\draw[color=black] (-0.1,-0.3) arc (180:270:1.1);
\draw[color=black] (0,-0.3) arc (180:270:1);
\draw[color=black] (0.1,-0.3) arc (180:270:0.9);
\draw[color=black] (1,-1.3) -- (1.1,-1.3);
\draw[color=black] (0.8,-1.1) -- (1.1,-1.3) -- (0.8,-1.5);
\node at (1.5,-1.3) {$u''$};
\node at (-0.3,1) {$\alpha$};
\node at (-0.3,-1) {$\beta$};
\draw[color=black] (2,1.4) arc (90:0:1);
\draw[color=black] (2,1.3) arc (90:0:0.9);
\draw[color=black] (2,1.2) arc (90:0:0.8);
\draw[color=black] (2.9,0.4) -- (2.9,0.3);
\draw[color=black] (2.7,0.5) -- (2.9,0.3) -- (3.1,0.5);
\node at (2.9,0) {$v$};
\draw[color=black] (2,-1.4) arc (270:360:1);
\draw[color=black] (2,-1.3) arc (270:360:0.9);
\draw[color=black] (2,-1.2) arc (270:360:0.8);
\draw[color=black] (2.9,-0.4) -- (2.9,-0.3);
\draw[color=black] (2.7,-0.5) -- (2.9,-0.3) -- (3.1,-0.5);
} \end{array}
\end{gather*}
into a branching made of rewriting sequences with the same 2-target. A linear~$(3,2)$-polygraph is \emph{confluent at} a 2-cell~$u \in \Sigma_2^\ell$ if all its branchings of source $u$ are confluent. A linear~$(3,2)$-polygraph is confluent if all its branchings are confluent. A linear~$(3,2)$-polygraph is \emph{locally confluent at} a 2-cell~$u \in \Sigma_2^\ell$ if all its local branchings of source $u$ are confluent. A linear~$(3,2)$-polygraph is \emph{locally confluent} if all its local branchings are confluent.

The linear~$(3,2)$-polygraph $\Sigma$ is \emph{terminating} if it has no infinite rewriting sequence.

The linear~$(3,2)$-polygraph $\Sigma$ is \emph{convergent} if it is terminating and confluent.

\begin{proposition}[Noetherian induction principle] Let $\Sigma$ be a terminating left-monomial  linear~$(3,2)$-polygraph. Let $P$ be a property on the 2-cells of $\Sigma^\ell$ such that for any $u \in \Sigma_2^\ell$, if for all $v$ such that~$u$ rewrites into~$v$, the property $P(v)$ is true,~$P(u)$ is true. Then,~$P(u)$ is true for any $u \in \Sigma_2^\ell$.
\end{proposition}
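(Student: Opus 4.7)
The plan is to argue by contradiction using the termination hypothesis to forbid an infinite descending chain of $2$-cells at which $P$ fails. The hypothesis on $P$ is contrapositively equivalent to: if $P(u)$ is false, then there exists some $v$ into which $u$ rewrites such that $P(v)$ is again false. This is exactly the well-founded induction principle, once we identify the rewriting relation as a well-founded relation on $\Sigma_2^\ell$.

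First, suppose for contradiction that the set $F = \{\, u \in \Sigma_2^\ell \mid P(u) \text{ is false}\,\}$ is nonempty, and pick $u_0 \in F$. By the hypothesis, since $P(u_0)$ is false, it cannot be the case that $P(v)$ holds for every $v$ into which $u_0$ rewrites; hence there exists $u_1 \in F$ with $u_0$ rewriting into $u_1$. Iterating this argument (using dependent choice), one constructs a sequence $(u_k)_{k \in \mathbb{N}}$ of elements of $F$ such that, for each $k$, $u_k$ rewrites into $u_{k+1}$.

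Next, by the definition recalled just before the proposition, for each $k$ the fact that $u_k$ rewrites into $u_{k+1}$ means there exists a non-degenerate rewriting sequence
\[
u_k = w_{k,0} \Rrightarrow w_{k,1} \Rrightarrow \cdots \Rrightarrow w_{k,n_k} = u_{k+1},
\]
with $n_k \geqslant 1$. Concatenating these finite sequences in order produces an infinite rewriting sequence of $\Sigma$ starting at $u_0$, contradicting the termination hypothesis. Hence $F$ is empty and $P(u)$ holds for every $u \in \Sigma_2^\ell$.

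The argument is essentially a standard well-founded induction, so there is no real obstacle beyond checking that the rewriting relation restricted to $\Sigma_2^\ell$ is well-founded, which is exactly the content of termination. Note that the left-monomial assumption is not directly used in this step; it is inherited from the setting in which rewriting steps are defined. I would also remark that the same argument works verbatim for any terminating abstract rewriting system, so this proposition is a purely formal consequence of the termination assumption together with the definition of \emph{rewrites into}.
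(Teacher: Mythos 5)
Your proof is correct and follows essentially the same route as the paper's: assume $P$ fails somewhere, use the contrapositive of the hypothesis to produce a chain of $2$-cells at which $P$ fails, and concatenate the resulting non-degenerate rewriting sequences into an infinite one contradicting termination. The extra care you take in spelling out the concatenation of the finite sequences (each of length at least $1$) is a welcome clarification of a step the paper leaves implicit.
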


\begin{proof} Let us assume $P(u_0)$ is false for some $u_0 \in \Sigma_2^\ell$. So, there exists $u_1 \in \Sigma_2^\ell$ such that~$u_0$ rewrites into~$u_1$ and~$P(u_1)$ is false. This gives us an infinite rewriting sequence:
$$u_0 \Rrightarrow \cdots  \Rrightarrow u_n \Rrightarrow \cdots $$
where $P(u_n)$ is false for each $n \in \mathbb{N}$. The existence of this rewriting sequence contradicts the termination hypothesis. Noetherian induction principle is thus proved.
\end{proof}

Termination of a rewriting system implies equivalence between confluence and local confluence \cite{Newman42}. This result is called Newman's lemma. We have the same result in the case of linear~$(3,2)$-polygraphs.

\begin{lemma} Let $\Sigma$ be a terminating left-monomial linear~$(3,2)$-polygraph. Then $\Sigma$ is confluent if and only if~$\Sigma$ if locally confluent.
\end{lemma}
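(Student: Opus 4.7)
The forward direction is immediate, since every local branching is in particular a branching; so confluence implies local confluence.

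For the converse, assume $\Sigma$ is locally confluent. The plan is to prove confluence by Noetherian induction, using the principle established above, with the property
\[ P(u) : \text{every branching with 2-source } u \text{ is confluent.} \]
By that principle, it suffices to show that if $P(v)$ holds for every $v$ such that $u$ rewrites into $v$, then $P(u)$ holds. So fix a branching
\[ u \odfll{\alpha} v_1 \qquad \text{and} \qquad u \odfll{\beta} v_2 \]
in $\Sigma^\ell$, where $\alpha$ and $\beta$ are rewriting sequences. If either $\alpha$ or $\beta$ is trivial (length zero) the branching is already confluent, so we may assume both begin with a genuine rewriting step; factor them as
\[ u \Rrightarrow u_1 \Rrightarrow^* v_1 \qquad \text{and} \qquad u \Rrightarrow u_2 \Rrightarrow^* v_2. \]

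The first step is to close the initial pair of rewriting steps using the local confluence hypothesis applied to the local branching $(u \Rrightarrow u_1, u \Rrightarrow u_2)$: this yields a 2-cell $w$ with $u_1 \Rrightarrow^* w$ and $u_2 \Rrightarrow^* w$. The second step applies the induction hypothesis at $u_1$, which is legitimate because $u$ rewrites into $u_1$: the branching $(u_1 \Rrightarrow^* v_1,\; u_1 \Rrightarrow^* w)$ is confluent, providing $w'$ with $v_1 \Rrightarrow^* w'$ and $w \Rrightarrow^* w'$. The third step applies the induction hypothesis at $u_2$, again legitimate because $u \Rrightarrow u_2$: the branching $(u_2 \Rrightarrow^* v_2,\; u_2 \Rrightarrow^* w \Rrightarrow^* w')$ is confluent, giving $w''$ with $v_2 \Rrightarrow^* w''$ and $w' \Rrightarrow^* w''$. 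Concatenating, $v_1 \Rrightarrow^* w' \Rrightarrow^* w''$ and $v_2 \Rrightarrow^* w''$, so the original branching is confluent. This establishes $P(u)$ and completes the induction.

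The only potential subtlety, compared to the classical Newman lemma for abstract rewriting systems, is to check that the Noetherian induction principle above really does allow the hypothesis to be invoked at every intermediate 2-cell that arises (here at $u_1$ and $u_2$), and that ``confluence of a branching'' is a genuine property of the 2-cell $u$ alone, independent of the particular sequences chosen. Both points are formal: the induction principle was stated for arbitrary properties on 2-cells of $\Sigma^\ell$, and $P(u)$ quantifies over all branchings with source $u$. The linear structure plays no role in the argument beyond the fact that the rewrite relation lives on $\Sigma_2^\ell$ and that rewriting steps, branchings, and confluence have already been defined in this setting; so the classical diagrammatic closure argument transfers verbatim.
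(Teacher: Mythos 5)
Your proof is correct and follows essentially the same route as the paper: Noetherian induction on the terminating rewrite relation, closing the initial local branching with local confluence and then applying the induction hypothesis twice at the intermediate targets of the first steps. The paper's confluence diagram encodes exactly the construction of your $w$, $w'$, $w''$.
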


\begin{proof} Confluence implies local confluence. We have to prove the converse implication, assuming~$\Sigma$ is terminating. Let us assume $\Sigma$ is locally confluent and let $u$ be in $\Sigma_2^\ell$. if~$u$ is a normal form, all branchings with source $u$ are confluent because there is no branching with source $u$. if~$u$ is not a normal form, and if for any $v$ such that~$u$ rewrites into~$v$ all branchings with source $v$ are confluent, any non-trivial branching~$(\alpha,\beta)$ is confluent by the following confluence diagram:
\begin{gather*}
\begin{array}{c}
\tikz[scale=0.9]{
\node at (0,0) {$u$};
\draw[color=black] (0.2,0.2) -- (1.3,1.3);
\draw[color=black] (0.1,0.3) -- (1.1,1.3);
\draw[color=black] (0.3,0.1) -- (1.3,1.1);
\draw[color=black] (1,1.3) -- (1.3,1.3) -- (1.3,1);
\node at (1.6,1.4) {$v$};
\node at (0.5,1) {$\alpha_0$};
\draw[color=black] (1.8,1.8) -- (2.8,2.8);
\draw[color=black] (1.7,1.9) -- (2.6,2.8);
\draw[color=black] (1.9,1.7) -- (2.8,2.6);
\draw[color=black] (2.5,2.8) -- (2.8,2.8) -- (2.8,2.5);
\node at (3.1,3) {$t_2(\alpha)$};
\draw[color=black] (0.2,-0.2) -- (1.3,-1.3);
\draw[color=black] (0.1,-0.3) -- (1.1,-1.3);
\draw[color=black] (0.3,-0.1) -- (1.3,-1.1);
\draw[color=black] (1,-1.3) -- (1.3,-1.3) -- (1.3,-1);
\node at (1.6,-1.4) {$w$};
\node at (0.5,-1) {$\beta_0$};
\draw[color=black] (1.8,-1.8) -- (2.8,-2.8);
\draw[color=black] (1.7,-1.9) -- (2.6,-2.8);
\draw[color=black] (1.9,-1.7) -- (2.8,-2.6);
\draw[color=black] (2.5,-2.8) -- (2.8,-2.8) -- (2.8,-2.5);
\node at (3.1,-3) {$t_2(\beta)$};
\draw[color=black] (2.9,0.2) -- (1.8,1.3);
\draw[color=black] (2.9,0.4) -- (1.9,1.4);
\draw[color=black] (2.7,0.2) -- (1.7,1.2);
\draw[color=black] (2.9,0.5) -- (2.9,0.2) -- (2.6,0.2);
\draw[color=black] (2.9,-0.2) -- (1.8,-1.3);
\draw[color=black] (2.9,-0.4) -- (1.9,-1.4);
\draw[color=black] (2.7,-0.2) -- (1.7,-1.2);
\draw[color=black] (2.9,-0.5) -- (2.9,-0.2) -- (2.6,-0.2);
\node at (3.1,0) {$u'$};
\draw[color=black] (4.5,1.7) -- (3.5,2.8);
\draw[color=black] (4.5,1.9) -- (3.6,2.9);
\draw[color=black] (4.3,1.7) -- (3.4,2.7);
\draw[color=black] (4.5,2) -- (4.5,1.7) -- (4.2,1.7);
\draw[color=black] (3.3,0.2) -- (4.4,1.3);
\draw[color=black] (3.2,0.3) -- (4.2,1.3);
\draw[color=black] (3.4,0.1) -- (4.4,1.1);
\draw[color=black] (4.1,1.3) -- (4.4,1.3) -- (4.4,1);
\node at (4.7,1.4) {$v'$};
\draw[color=black] (5,1.2) -- (6,0.2);
\draw[color=black] (5.1,1.3) -- (6,0.4);
\draw[color=black] (4.9,1.1) -- (5.8,0.2);
\draw[color=black] (5.7,0.2) -- (6,0.2) -- (6,0.5);
\draw[color=black] (5.8,-0.2) -- (3.5,-2.5);
\draw[color=black] (6,-0.2) -- (3.6,-2.6);
\draw[color=black] (6,-0.4) -- (3.7,-2.7);
\draw[color=black] (5.7,-0.2) -- (6,-0.2) -- (6,-0.5);
\node at (6.2,0) {$\hat{u}$};
} \end{array}
\end{gather*}

where $\alpha_0$ and~$\beta_0$ are respectively the first rewriting step of $\alpha$ and~$\beta$.
\end{proof}

This proof for linear~$(3,2)$-polygraphs by Noetherian induction works like in the case of abstract rewriting systems \cite{Huet}.

\begin{definition} An aspherical branching of $\Sigma$ is a branching made up of two identical rewriting steps. A Peiffer branching is a branching of the form:
$$t_2(\alpha) \star_1 s_2(\beta)+h \Lleftarrow s_2(\alpha) \star_1 s_2(\beta)+h \Rrightarrow s_2(\alpha) \star_1 t_2(\beta)+h$$
where $\alpha$ and~$\beta$ are rewriting steps of $\Sigma$ with monomial source. Note that all branchings of the form:
$$t_2(\alpha) \star_1 s_2(\beta)+h \Lleftarrow s_2(\alpha) \star_0 s_2(\beta)+h \Rrightarrow s_2(\alpha) \star_1 t_2(\beta)+h$$
are also Peiffer branchings beacause of the relation:
$$s_2(\alpha) \star_0 s_2(\beta)=(s_2(\alpha) \star_0 1_{s_1(\beta)}) \star_1 (1_{s_1(\alpha)} \star_0 s_2(\beta))$$

An \emph{additive branching} is a branching  of the form:
$$t_2(\alpha) + s_2(\beta) \Lleftarrow s_2(\alpha) + s_2(\beta) \Rrightarrow s_2(\alpha) + t_2(\beta)$$
where $\alpha$ and~$\beta$ are rewriting steps of $\Sigma$. An overlapping branching is a branching that is not aspherical, Peiffer or additive. \end{definition}

\subsubsection{Notation}\label{notaCP} Let us partition $\Sigma_3$ in multiple families and assume an overlapping branching $(\alpha ,\beta )$ of $\Sigma$ is confluent. If $\alpha$ is in the family $i$ and $\beta$ is in the family $j$, we denote:
$$u \Rrightarrow^i_j v$$
where $u$ is the source of $(\alpha ,\beta )$ and $v$ is a 2-cell attained after completion of $(\alpha ,\beta )$.

\begin{definition} Let $\sqsubseteq$ be the order on the monomials of $\Sigma$ such that we have $f \sqsubseteq g$ if we have~$g=m_1 \star_1 (m_2 \star_0 f \star_0 m_3) \star_1 m_4$ for some monomials $m_i$. A \emph{critical branching} is an overlapping branching with monomial source such that its source is minimal for~$\sqsubseteq$. \end{definition}

\begin{definition} A 3-cell of $\Sigma^\ell$ is \emph{elementary} if it is of the form $\lambda m_1 \star_1 (m_2 \star_0 \alpha \star_0 m_3) \star_1 m_4+u$ where $\alpha$ is in $\Sigma_3$, the $m_i$ are monomials,~$\lambda$ is a nonzero scalar, and~$u$ is a 2-cell. \end{definition}

\begin{example} Let $A$ and~$B$ be two monomials of $\Sigma^\ell$ such that there is a rewriting step from $A$ to~$B$. Then, there is a 3-cell of $\Sigma^\ell$ from $2A$ to~$A+B$ which is not a rewriting step. But this 3-cell is elementary. \end{example}

\begin{lemma}\label{AP} Let $\alpha$ be an elementary 3-cell. Then, there exists two rewriting sequences $\beta$ and~$\gamma$ of length at most 1 such that~$\alpha=\beta \star_2 \gamma^{-1}$. \end{lemma}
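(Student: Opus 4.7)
Write $\alpha$ in the form $\lambda\tilde{\alpha}_{0}+1_{u}$, where $\tilde{\alpha}_{0}=m_{1}\star_{1}(m_{2}\star_{0}\alpha_{0}\star_{0}m_{3})\star_{1}m_{4}$ for some generator $\alpha_{0}\in\Sigma_{3}$, scalar $\lambda\neq0$, monomials $m_{i}$, and a 2-cell $u$. Let $f=s_{2}(\tilde{\alpha}_{0})$ and $g=t_{2}(\tilde{\alpha}_{0})$; since $\Sigma$ is left-monomial, $f$ is a monomial, so by the uniqueness remark following the definition of a left-monomial polygraph we decompose $u=\mu f+u'$ with $f$ absent from the monomial decomposition of $u'$. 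If $\mu=0$ then $\alpha$ itself already satisfies the definition of a rewriting step; take $\beta:=\alpha$ (length one) and $\gamma:=1_{t_{2}(\alpha)}$ (length zero), so $\alpha=\beta\star_{2}\gamma^{-1}$ trivially.

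Assume henceforth $\mu\neq0$, so that $s_{2}(\alpha)=(\lambda+\mu)f+u'$ and $t_{2}(\alpha)=\mu f+(\lambda g+u')$. The idea is to produce rewriting steps $\beta$ and $\gamma$ sharing the common target $(\lambda+\mu)g+u'$. When $\lambda+\mu\neq0$, let $\beta$ be the rewriting step $(\lambda+\mu)f+u'\Rrightarrow(\lambda+\mu)g+u'$ of coefficient $\lambda+\mu$, and let $\gamma$ be the rewriting step $\mu f+(\lambda g+u')\Rrightarrow(\lambda+\mu)g+u'$ of coefficient $\mu$, noting that the source of $\gamma$ is precisely $t_{2}(\alpha)$. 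When $\lambda+\mu=0$, set $\beta:=1_{u'}$ (length zero) and keep the same $\gamma$, for which $(\lambda+\mu)g+u'=u'$. In either situation $t_{2}(\beta)=t_{2}(\gamma)$, so $\beta\star_{2}\gamma^{-1}$ is well defined and its boundaries agree with those of $\alpha$ by construction.

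Upgrading this from agreement of boundaries to equality of 3-cells is a direct computation in $\Sigma^{\ell}$: using the formula $\gamma^{-1}=1_{t_{2}(\gamma)}+1_{s_{2}(\gamma)}-\gamma$ recalled before the definition of a rewriting step, together with bilinearity of $\star_{2}$ and the linearity relations (\ref{idlin})--(\ref{complin2}), the whiskered generator $\tilde{\alpha}_{0}$ appears in $\beta\star_{2}\gamma^{-1}$ with signed total coefficient $(\lambda+\mu)-\mu=\lambda$, while the identity summands recombine through $1_{\mu f+u'}=\mu\,1_{f}+1_{u'}$ into $1_{u}$; this recovers $\lambda\tilde{\alpha}_{0}+1_{u}=\alpha$. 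The delicate point I expect to spend time on is checking that the rewriting step $\gamma$ is legitimate: this boils down to verifying that $f$ is absent from the 2-cell $\lambda g+u'$, which combines the already-arranged condition $f\notin u'$ with the standard non-degeneracy convention that the target of a 3-cell generator does not reintroduce its source as a monomial summand.
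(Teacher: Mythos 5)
Your construction is essentially the paper's own proof: both decompose the elementary 3-cell into its whiskered-generator part and an identity part, isolate the coefficient $\mu$ of the source monomial inside the identity part, and produce two rewriting steps (or identities) $\beta$ and $\gamma$ sharing the common target $(\lambda+\mu)g+u'$, with the degenerate cases handled by identities. The one caveat you flag at the end --- that the source monomial must not reappear in the target of the generator for $\gamma$ to be a legitimate rewriting step --- is left implicit in the paper's proof as well, so your version is, if anything, slightly more careful.
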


\begin{proof} Let us write $\alpha=\alpha'+g$ where $\alpha'$ is a rewriting step from a 2-cell~$u$ to a 2-cell~$f$ and~$g$ is a 2-cell. Let us write $g=\lambda u+h$ where $f$ does not appear in the monomial decomposition of $h$. Then,~$(\lambda+1)u+h$ rewrites into~$(\lambda+1)f+h$ either by an identity or a rewriting step and~$u+g$ rewrites into~$(\lambda+1)u+h$ either by an identity or a rewriting step.
\end{proof}

\begin{definition} Let $\Sigma$ be a left-monomial linear~$(3,2)$-polygraph. The \emph{rewrite order} of $\Sigma$ is the relation~$\preccurlyeq'_{\Sigma}$ on~$\Sigma_2^{\ell}$ defined by $v \preccurlyeq'_{\Sigma} u$ if~$u$ rewrites into~$v$ or~$u=v$. The \emph{canonical rewrite order} of $\Sigma$ is the minimal binary relation~$\preccurlyeq_{\Sigma}$ such that:
\begin{itemize}
\item if~$v \preccurlyeq'_{\Sigma} u$, then $v \preccurlyeq_{\Sigma} u$,
\item if~$v \preccurlyeq_{\Sigma} u$,~$v' \preccurlyeq_{\Sigma} u'$, and~$u$ and~$u'$ do not have any common monomial in their decomposition, then $v+v' \preccurlyeq_{\Sigma} u+u'$.
\end{itemize}
The \emph{strict canonical rewrite order} of $\Sigma$ is the strict order~$\prec_{\Sigma}$ defined for any 2-cells $u$ and~$v$ by $v \prec_{\Sigma} u$ if we have~$v \preccurlyeq_{\Sigma} u$ but not $u \preccurlyeq_{\Sigma} v$. The \emph{semistrict rewrite order} of $\Sigma$ is the binary relation~$\prec'_{\Sigma}$ on~$\Sigma_2^{\ell}$ defined by $v \prec'_{\Sigma} u$ if~$u$ rewrites into~$v$. \end{definition}

\begin{remark} if~$\Sigma$ is terminating, those relations are order relations and~$\prec_{\Sigma}$ is well-founded. In general, only~$\prec_{\Sigma}$ is an order. \end{remark}

\begin{lemma}\label{ing} Let $f$ be a 2-cell of a left-monomial linear~$(3,2)$-polygraph~$\Sigma$. Let us assume that all critical branchings of $\Sigma$ are confluent and~$\Sigma$ is locally confluent at every 2-cell~$g$ such that~$g \prec_{\Sigma} f$. Then,~$\Sigma$ is locally confluent at~$f$. \end{lemma}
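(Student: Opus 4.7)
The plan is to prove local confluence at $f$ by classifying any local branching $(\alpha, \beta)$ with $2$-source $f$ into one of the four types mentioned just before the statement: aspherical, Peiffer, additive, or overlapping. Aspherical branchings are trivially confluent since $\alpha = \beta$. So the real work is the other three cases.

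For Peiffer and additive branchings, the two active sub-monomials $M_a$ and $M_b$ (on which $\alpha$ and $\beta$ act) occupy disjoint positions in the decomposition of $f$, either because they sit in disjoint contexts under $\star_0$ and $\star_1$ (Peiffer case) or because they are distinct monomial summands (additive case). The natural confluence diagram is obtained by applying $\beta$ after $\alpha$ and vice versa. The subtlety here is that the target $t_2(\alpha)$ may contain $s_2(\beta)$ in its monomial decomposition with a new coefficient (since Peiffer/additive are defined at the level of monomials, not of linear combinations), so the required second rewrite may not literally be a rewriting step on the nose but rather an elementary $3$-cell. I would invoke Lemma \ref{AP} to split this elementary $3$-cell as a composite of at most two rewriting steps, and close the resulting diagram using the inductive hypothesis that $\Sigma$ is locally confluent at every $g \prec_\Sigma f$, since both intermediate $2$-cells $t_2(\alpha) + \text{rest}$ and $s_2(\alpha) + t_2(\beta)$ satisfy $g \prec_\Sigma f$.

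For an overlapping branching, I would first reduce to a critical one. By definition of $\sqsubseteq$, the $2$-source $f$ of $(\alpha, \beta)$ can be written as $m_1 \star_1 (m_2 \star_0 c \star_0 m_3) \star_1 m_4 + u$ where $c$ is a monomial which is $\sqsubseteq$-minimal and carries an overlapping branching $(\alpha_0, \beta_0)$ coming from the same underlying pair of $3$-cells as $(\alpha, \beta)$. This $(\alpha_0, \beta_0)$ is critical, hence confluent by hypothesis. I would transport this confluence diagram through the context $m_1 \star_1 (m_2 \star_0 - \star_0 m_3) \star_1 m_4$ using linearity of all compositions, obtaining a confluence diagram for the scaled and contextualised branching $(\alpha, \beta)$ on the monomial part. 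The residual $u$ then has to be added back; as in the Peiffer case, this typically produces elementary $3$-cells rather than rewriting sequences, which I again resolve using Lemma \ref{AP} and the inductive hypothesis applied to all intermediate $2$-cells, each of which is $\prec_\Sigma f$.

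The main obstacle is the last step: transporting critical confluence to the contextualised, linearly perturbed branching. Unlike in non-linear polygraphic rewriting, the completion of a confluence diagram at monomial level does not automatically produce rewriting sequences in $\Sigma^\ell$ once the residual term $u$ is reintroduced, because coefficient cancellations may prevent the intermediate $2$-cells from rewriting ``cleanly.'' The crucial leverage is the combination of Lemma \ref{AP}, which guarantees that every elementary $3$-cell produced this way factors through genuine rewriting steps, with the inductive hypothesis on $g \prec_\Sigma f$, which closes any residual diamond in one extra application of local confluence at a strictly smaller $2$-cell.
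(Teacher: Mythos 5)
Your overall strategy matches the paper's: classify the local branching as aspherical, additive, Peiffer, or overlapping; dispose of the aspherical case trivially; in the remaining cases use Lemma \ref{AP} to replace the elementary 3-cells produced by coefficient interference with genuine rewriting steps, and close the diagram with the inductive hypothesis at a smaller 2-cell. The structure is right, but there is one step that does not go through as written.

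The gap is the assertion that ``both intermediate 2-cells \ldots{} satisfy $g \prec_{\Sigma} f$.'' This is not automatic. The lemma carries no termination hypothesis (it feeds into Theorem \ref{CP}, which only assumes that $\prec_{\Sigma}$ is well-founded), so the preorder $\preccurlyeq_{\Sigma}$ may contain cycles: a 2-cell $g$ reached from $f$ by one or two rewriting steps certainly satisfies $g \preccurlyeq_{\Sigma} f$, but nothing forces $f \not\preccurlyeq_{\Sigma} g$, which is what $g \prec_{\Sigma} f$ requires. When strictness fails you cannot invoke the inductive hypothesis at $g$, and your closing move stalls. The paper repairs this with an explicit dichotomy in each of the three non-trivial cases: either the relevant 2-cell (for instance $t_2(\alpha')+t_2(\beta')$ in the additive case) is strictly below $f$, in which case the induction applies exactly as you describe; or it is not, in which case one of the two one-step targets already rewrites into the other (one of $t_2(\alpha')+s_2(\beta') \preccurlyeq'_{\Sigma} s_2(\alpha')+t_2(\beta')$ or the reverse relation holds), and the branching is confluent directly, with no appeal to the induction. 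You need to add this case split.

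A secondary remark on the overlapping case: your plan (reduce to a critical branching by $\sqsubseteq$-minimality, transport its confluence through the context by linearity, reinstate the residual $u$, and repair the resulting elementary 3-cells with Lemma \ref{AP} and the induction) is a workable route, subject to the same strictness caveat. The paper's own conclusion in the non-degenerate overlapping case is different: it uses well-foundedness of $\prec_{\Sigma}$ to extract a monomial $m$ of $t_2(\alpha)-t_2(\beta)$ that rewrites into a linear combination of the remaining monomials and occurs in only one of the two targets, and concludes confluence from there. Either argument is acceptable once the degenerate case is treated.
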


\begin{proof} A local branching with source $f$ is either an aspherical branching, an overlapping branching, an additive branching or a Peiffer branching. Aspherical branchings are always confluent. Let $(\alpha,\beta)$ be an additive branching with source $f$. Let us write $f=s_2(\alpha')+s_2(\beta')$ where $\alpha'$ is the rewriting step~$\alpha-s_2(\beta)$ and~$\beta'$ is the rewriting step~$\beta-s_2(\alpha)$. If we do not have~$f \prec_{\Sigma} t_2(\alpha')+t_2(\beta')$, we have either~$t_2(\alpha')+s_2(\beta') \preccurlyeq'_{\Sigma} s_2(\alpha')+t_2(\beta')$ or~$s_2(\alpha')+t_2(\beta') \preccurlyeq'_{\Sigma} t_2(\alpha')+s_2(\beta')$, and~$(\alpha,\beta)$ is confluent. Else, we have the following confluence diagram:
\begin{gather*}
\begin{array}{c}
\tikz[scale=0.9]{
\node at (0,0) {$f$};
\node at (0.5,1) {$\alpha$};
\node at (0.5,-1) {$\beta$};
\draw[color=black] (1.4,1) -- (1.4,1.4) -- (1,1.4);
\draw[color=black] (0.2,0.2) to (1.4,1.4);
\draw[color=black] (0.3,0.1) to (1.4,1.2);
\draw[color=black] (0.1,0.3) to (1.2,1.4);
\node at (2.9,1.7) {$t_2(\alpha')+s_2(\beta')$};
\draw[color=black] (1.4,-1) -- (1.4,-1.4) -- (1,-1.4);
\draw[color=black] (0.2,-0.2) to (1.4,-1.4);
\draw[color=black] (0.3,-0.1) to (1.4,-1.2);
\draw[color=black] (0.1,-0.3) to (1.2,-1.4);
\node at (2.9,-1.7) {$s_2(\alpha')+t_2(\beta')$};
\node at (7,0) {$t_2(\alpha')+t_2(\beta')$};
\draw[color=black, dashed] (5.4,-0.5) -- (5.4,-0.1) -- (5,-0.1);
\draw[color=black, dashed] (4.2,-1.3) to (5.4,-0.1);
\draw[color=black, dashed] (4.3,-1.4) to (5.4,-0.3);
\draw[color=black, dashed] (4.1,-1.2) to (5.2,-0.1);
\draw[color=black, dashed] (5.4,0.5) -- (5.4,0.1) -- (5,0.1);
\draw[color=black, dashed] (4.2,1.3) to (5.4,0.1);
\draw[color=black, dashed] (4.3,1.4) to (5.4,0.3);
\draw[color=black, dashed] (4.1,1.2) to (5.2,0.1);
\draw[color=black] (9.3,1.9) -- (9.5,1.7) -- (9.3,1.5);
\draw[color=black] (4.5,1.7) to (9.5,1.7);
\draw[color=black] (4.5,1.8) to (9.4,1.8);
\draw[color=black] (4.5,1.6) to (9.4,1.6);
\node at (9.7,1.7) {$f'$};
\draw[color=black] (9.3,-1.9) -- (9.5,-1.7) -- (9.3,-1.5);
\draw[color=black] (4.5,-1.7) to (9.5,-1.7);
\draw[color=black] (4.5,-1.8) to (9.4,-1.8);
\draw[color=black] (4.5,-1.6) to (9.4,-1.6);
\node at (9.7,-1.7) {$f''$};
\draw[color=black] (9.5,1) -- (9.5,1.4) -- (9.1,1.4);
\draw[color=black] (8.3,0.2) to (9.5,1.4);
\draw[color=black] (8.4,0.1) to (9.5,1.2);
\draw[color=black] (8.2,0.3) to (9.3,1.4);
\draw[color=black] (9.5,-1) -- (9.5,-1.4) -- (9.1,-1.4);
\draw[color=black] (8.3,-0.2) to (9.5,-1.4);
\draw[color=black] (8.4,-0.1) to (9.5,-1.2);
\draw[color=black] (8.2,-0.3) to (9.3,-1.4);
\draw[color=black] (11.4,-0.7) -- (11.4,-0.3) -- (11,-0.3);
\draw[color=black] (10.2,-1.5) to (11.4,-0.3);
\draw[color=black] (10.3,-1.6) to (11.4,-0.5);
\draw[color=black] (10.1,-1.4) to (11.2,-0.3);
\draw[color=black] (11.4,0.7) -- (11.4,0.3) -- (11,0.3);
\draw[color=black] (10.2,1.5) to (11.4,0.3);
\draw[color=black] (10.3,1.6) to (11.4,0.5);
\draw[color=black] (10.1,1.4) to (11.2,0.3);
\node at (11.7,0) {$\overline{f}$};
\node at (11,1.2) {$a$};
\node at (11,-1.2) {$b$};
} \end{array}
\end{gather*}
Here, the dotted 3-cells are elementary, the 3-cells $a$ and~$b$ are rewriting sequences, and the other 3-cells are rewriting steps or identities, whose existence are guaranteed by Lemma \ref{AP}.

Assume there is a Peiffer branching with source $f$. Let us write $f=u \star_1 v+h$ where $u$ and~$v$ are respectively monomial sources of rewriting steps $\alpha$ and~$\beta$. If we do not have~$f \prec_{\Sigma} t_2(\alpha) \star_1 t_2(\beta)+h$, we have either~$u \star_1 \beta +h \preccurlyeq'_{\Sigma} t_2(\alpha) \star_1 v+h$ or~$t_2(\alpha) \star_1 v+h \preccurlyeq'_{\Sigma} u \star_1 \beta +h$, and the Peiffer branching is confluent. Else, we have the following confluence diagram:
\begin{gather*}
\begin{array}{c}
\tikz[scale=0.9]{
\node at (0,0) {$f$};
\node at (-0.5,1) {$\alpha \star_1 v+h$};
\node at (-0.5,-1) {$u \star_1 \beta +h$};
\draw[color=black] (1.4,1) -- (1.4,1.4) -- (1,1.4);
\draw[color=black] (0.2,0.2) to (1.4,1.4);
\draw[color=black] (0.3,0.1) to (1.4,1.2);
\draw[color=black] (0.1,0.3) to (1.2,1.4);
\node at (2.9,1.7) {$t_2(\alpha) \star_1 v+h$};
\draw[color=black] (1.4,-1) -- (1.4,-1.4) -- (1,-1.4);
\draw[color=black] (0.2,-0.2) to (1.4,-1.4);
\draw[color=black] (0.3,-0.1) to (1.4,-1.2);
\draw[color=black] (0.1,-0.3) to (1.2,-1.4);
\node at (2.9,-1.7) {$u \star_1 t_2(\beta)+h$};
\node at (7.2,0) {$t_2(\alpha) \star_1 t_2(\beta)+h$};
\draw[color=black, dashed] (5.4,-0.5) -- (5.4,-0.1) -- (5,-0.1);
\draw[color=black, dashed] (4.2,-1.3) to (5.4,-0.1);
\draw[color=black, dashed] (4.3,-1.4) to (5.4,-0.3);
\draw[color=black, dashed] (4.1,-1.2) to (5.2,-0.1);
\draw[color=black, dashed] (5.4,0.5) -- (5.4,0.1) -- (5,0.1);
\draw[color=black, dashed] (4.2,1.3) to (5.4,0.1);
\draw[color=black, dashed] (4.3,1.4) to (5.4,0.3);
\draw[color=black, dashed] (4.1,1.2) to (5.2,0.1);
\draw[color=black] (10.3,1.9) -- (10.5,1.7) -- (10.3,1.5);
\draw[color=black] (4.5,1.7) to (10.5,1.7);
\draw[color=black] (4.5,1.8) to (10.4,1.8);
\draw[color=black] (4.5,1.6) to (10.4,1.6);
\node at (10.7,1.7) {$f'$};
\draw[color=black] (10.3,-1.9) -- (10.5,-1.7) -- (10.3,-1.5);
\draw[color=black] (4.5,-1.7) to (10.5,-1.7);
\draw[color=black] (4.5,-1.8) to (10.4,-1.8);
\draw[color=black] (4.5,-1.6) to (10.4,-1.6);
\node at (10.7,-1.7) {$f''$};
\draw[color=black] (10.5,1) -- (10.5,1.4) -- (10.1,1.4);
\draw[color=black] (9.3,0.2) to (10.5,1.4);
\draw[color=black] (9.4,0.1) to (10.5,1.2);
\draw[color=black] (9.2,0.3) to (10.3,1.4);
\draw[color=black] (10.5,-1) -- (10.5,-1.4) -- (10.1,-1.4);
\draw[color=black] (9.3,-0.2) to (10.5,-1.4);
\draw[color=black] (9.4,-0.1) to (10.5,-1.2);
\draw[color=black] (9.2,-0.3) to (10.3,-1.4);
\draw[color=black] (12.4,-0.7) -- (12.4,-0.3) -- (12,-0.3);
\draw[color=black] (11.2,-1.5) to (12.4,-0.3);
\draw[color=black] (11.3,-1.6) to (12.4,-0.5);
\draw[color=black] (11.1,-1.4) to (12.2,-0.3);
\draw[color=black] (12.4,0.7) -- (12.4,0.3) -- (12,0.3);
\draw[color=black] (11.2,1.5) to (12.4,0.3);
\draw[color=black] (11.3,1.6) to (12.4,0.5);
\draw[color=black] (11.1,1.4) to (12.2,0.3);
\node at (12.7,0) {$\overline{f}$};
\node at (12,1.2) {$a$};
\node at (12,-1.2) {$b$};
} \end{array}
\end{gather*}
Assume there is an overlapping branching with source $f$. Let us write $f=\lambda m_1 (m_2 \star_1 u \star_1 m_3) m_4+h$ where the $m_i$ are monomials,~$u$ is the source of a critical branching and~$\lambda$ is a nonzero scalar. The overlapping branching can be written $(\alpha +h, \beta +h)$ where $(\alpha , \beta)$ is confluent by hypothesis to a common target $g$. if~$t_2(\alpha)=t_2(\beta)$ or if we do not have~$f \prec_{\Sigma} g+h$, we have either~$t_2(\alpha)+h \preccurlyeq'_{\Sigma} t_2(\beta)+h$ or~$t_2(\beta)+h \preccurlyeq'_{\Sigma} t_2(\alpha)+h$, and the overlapping branching is confluent. Else, because $\prec_{\Sigma}$ is well-founded, there is a monomial~$m$ of $t_2(\alpha)-t_2(\beta)$ such that:
\begin{itemize}
\item $m$ can be rewritten into a linear combination of the others monomials of $t_2(\alpha)-t_2(\beta)$,
\item $m$ only appears in $t_2(\alpha)$ or~$t_2(\beta)$.
\end{itemize}
This makes the branching~$(\alpha +h, \beta +h)$ confluent and concludes the last case of local branching.
\end{proof}

\begin{theorem}\label{CP} Let $\Sigma$ be a left-monomial linear~$(3,2)$-polygraph such that~$\prec_{\Sigma}$ is well-founded. Then,~$\Sigma$ is locally confluent if and only if its critical branchings are confluent. \end{theorem}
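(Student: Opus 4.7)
The plan is straightforward given the preceding lemmas. The direction $(\Rightarrow)$ is immediate: every critical branching is by definition an overlapping branching, which is one of the four classes of local branching (aspherical, Peiffer, additive, overlapping). If $\Sigma$ is locally confluent, every local branching is confluent, so in particular every critical branching is confluent.

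For the converse $(\Leftarrow)$, my strategy is well-founded induction on the strict canonical rewrite order $\prec_{\Sigma}$, which is well-founded by hypothesis. The induction statement is: for every 2-cell $f \in \Sigma_2^{\ell}$, the polygraph $\Sigma$ is locally confluent at $f$. The inductive step is precisely Lemma \ref{ing}: assuming confluence of all critical branchings (the standing hypothesis) and local confluence of $\Sigma$ at every $g \prec_{\Sigma} f$ (the induction hypothesis), Lemma \ref{ing} concludes that $\Sigma$ is locally confluent at $f$. The base case, namely $f$ minimal for $\prec_{\Sigma}$ (in particular any normal form), is vacuous since there are no rewriting steps out of $f$ and hence no local branchings to check. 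Well-founded induction then yields local confluence at every 2-cell, i.e.\ local confluence of $\Sigma$.

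The main conceptual point is to notice that it is well-foundedness of $\prec_{\Sigma}$, rather than termination of $\Sigma$ itself, that enables the induction: this is crucial because in the intended application to $\overline{\mathrm{AOB}}$ termination fails, whereas $\prec_{\Sigma}$ remains well-founded. Once this is recognised, no further rewriting-theoretic analysis is needed at the level of the theorem, since Lemma \ref{ing} has already performed the case distinction over the four types of local branching and absorbed the use of critical-branching confluence into the overlapping case via the minimal context decomposition $f = \lambda m_1 \star_1 (m_2 \star_0 u \star_0 m_3) \star_1 m_4 + h$ with $u$ the source of a critical branching. The only obstacle to watch for is to ensure that the side 2-cells appearing in the confluence diagrams of Lemma \ref{ing} are genuinely $\prec_{\Sigma}$-below $f$ so that the induction hypothesis applies, which follows directly from the closure properties built into the definition of $\preccurlyeq_{\Sigma}$.
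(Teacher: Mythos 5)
Your proof is correct and follows the paper's argument exactly: the forward direction is the same observation that critical branchings are local branchings, and the converse is well-founded induction on $\prec_{\Sigma}$ with Lemma \ref{ing} supplying the inductive step. One small imprecision: a 2-cell that is minimal for $\prec_{\Sigma}$ need \emph{not} be a normal form when $\Sigma$ fails to terminate — a rewriting loop $f \Rrightarrow v \Rrightarrow f$ makes $f$ and $v$ incomparable for the strict order, so $f$ can be minimal while still admitting rewriting steps, and such loops actually occur in the intended application to $\overline{\mathrm{AOB}}$. This does not harm the argument, because well-founded induction needs no separate base case: Lemma \ref{ing} applied to a minimal $f$ has a vacuously true hypothesis and still yields local confluence at $f$, since the branch of its proof that invokes the hypothesis is only entered when some 2-cell lies strictly below $f$.
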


\begin{proof} if~$\Sigma$ is locally confluent, then its critical branchings are confluent. Conversely, let us assume that all critical branchings of $\Sigma$ are confluent. Induction on~$\prec_{\Sigma}$ can be used to prove that~$\Sigma$ is confluent. Indeed,~$\Sigma$ is locally confluent at every minimal 2-cell for~$\prec_{\Sigma}$ and lemma \ref{ing} concludes the proof.
\end{proof}

\begin{lemma}\label{ker} Let $\Sigma$ be a confluent left-monomial linear~$(3,2)$-polygraph. Let $\mathcal{C}$ be the linear~$(2,2)$-category presented by $\Sigma$. Then, for any 1-cells $u$ and~$v$ of $\mathcal{C}$ with same 0-source and 0-target, the linear map $\tau$ from $\Sigma_2^\ell(u,v)$ to~$\mathcal{C}(u,v)$ sending each 2-cell to its congruence class has for kernel the subspace of $\Sigma_2^\ell(u,v)$ made of all 2-cells having~$0$ as a normal form. \end{lemma}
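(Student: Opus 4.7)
The plan is to prove the two inclusions separately, the whole argument hinging on the observation that the $2$-cell $0$ is a normal form. This second observation is essentially by inspection of the definition of rewriting step: a rewriting step has source of the form $\lambda m_1 \star_1 (m_2 \star_0 s_2(\alpha) \star_0 m_3) \star_1 m_4 + u$ with $\lambda$ a nonzero scalar and the distinguished monomial $m_1 \star_1 (m_2 \star_0 s_2(\alpha) \star_0 m_3) \star_1 m_4$ not appearing in the monomial decomposition of $u$. Since the monomial decomposition of $0$ is empty, no such presentation exists, so $0$ admits no outgoing rewriting step.

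For the inclusion $\supseteq$, if $f \in \Sigma_2^{\ell}(u,v)$ has $0$ as a normal form then either $f = 0$, which is trivial, or there is a non-degenerate rewriting sequence $f \Rrightarrow^{*} 0$. Composing the corresponding rewriting steps in $\Sigma^{\ell}$ yields a $3$-cell from $f$ to $0$, hence $f \equiv 0$ and $\tau(f) = 0$.

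For the reverse inclusion $\subseteq$, suppose $\tau(f) = 0$, i.e.\ $f \equiv 0$. By definition of the congruence there is a $3$-cell $\alpha$ in $\Sigma^{\ell}$ from $f$ to $0$. Decomposing $\alpha$ into a $2$-composition of elementary $3$-cells (using the construction of the free linear $(3,2)$-category) and then applying Lemma \ref{AP} to each elementary factor, we obtain a zigzag
\[
f = f_0 \;\longleftrightarrow\; f_1 \;\longleftrightarrow\; \cdots \;\longleftrightarrow\; f_N = 0,
\]
where each arrow is either a rewriting step or the inverse of one. I then want to establish the Church--Rosser property for $\Sigma$: any two $2$-cells connected by such a zigzag share a common reduct. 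This is proved from confluence by the classical induction on $N$, where the inductive step is exactly the confluence diagram applied at the peak $f_k \Llleftarrow f_{k-1} \Rrightarrow f_{k+1}$ (or is trivial when $f_{k-1} \Rrightarrow f_k \Rrightarrow f_{k+1}$ or $f_{k-1} \Llleftarrow f_k \Llleftarrow f_{k+1}$). Once Church--Rosser is in place, $f$ and $0$ have a common reduct $h$; but $0$ is a normal form, so $h = 0$ and $f \Rrightarrow^{*} 0$, which means $0$ is a normal form of $f$.

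The only real obstacle is the bridge between the algebraic relation $\equiv$ and the combinatorial rewriting relation $\Rrightarrow$: a generic $3$-cell witnessing $f \equiv 0$ is built from elementary $3$-cells whose sources are arbitrary $2$-cells, not necessarily the sources of rewriting steps in the strict sense. Lemma \ref{AP} is exactly what removes this obstacle by factoring every elementary $3$-cell as a rewriting step followed by an inverse rewriting step, after which the proof reduces to the standard Church--Rosser argument from confluence and the elementary fact that $0$ is irreducible.
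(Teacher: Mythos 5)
Your proof is correct and follows the same route as the paper, whose own argument is just the terse assertion that confluence forces any $2$-cell congruent to $0$ to rewrite into $0$; you have simply made explicit the standard justification (decomposition into elementary $3$-cells, Lemma \ref{AP} to turn each into a one-step zigzag, and the Church--Rosser property derived from confluence, together with the irreducibility of $0$). No gap.
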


\begin{proof} $\Sigma$ being left-monomial,~$0$ is a normal form. If a 2-cell rewrites into~$0$, it is congruent to~$0$. Converesly, let us assume a non-zero 2-cell~$f$ is in $Ker(\tau)$. Confluence of $\Sigma$ makes $f$ rewrite into 0. This concludes the proof. \end{proof}

\begin{proposition}\label{basis} Let $\Sigma$ be a confluent and normalizing left-monomial linear~$(3,2)$-polygraph. Let $\mathcal{C}$ be the linear~$(2,2)$-category presented by $\Sigma$. Then, for any 1-cells $u$ and~$v$ of $\mathcal{C}$ with same 0-source and 0-target, the set of monomials of $\Sigma$ in normal form with 1-source $u$ and 1-target $v$ gives a basis of $\mathcal{C}(u,v)$. \end{proposition}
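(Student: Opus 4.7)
The plan is to split the claim into two parts --- spanning and linear independence --- and derive each from the normalizing hypothesis, confluence, and Lemma~\ref{ker}.

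First, for spanning, I would start with an arbitrary class $[f] \in \mathcal{C}(u,v)$ represented by some $f \in \Sigma_2^\ell(u,v)$. Since $\Sigma$ is normalizing, $f$ rewrites to a normal form $\hat f$, and $[f] = [\hat f]$ because all $3$-cells of the free linear~$(3,2)$-category are invertible. The crucial step is to observe that when $\hat f = \sum_i \lambda_i m_i$ is decomposed uniquely into distinct monomials with nonzero scalars, each $m_i$ must itself be a normal form monomial: otherwise some $m_{i_0}$ would contain a redex $m_{i_0} = p_1 \star_1 (p_2 \star_0 s_2(\alpha) \star_0 p_3) \star_1 p_4$, and taking $u = \sum_{i \neq i_0} \lambda_i m_i$ as the context (which automatically does not contain $m_{i_0}$ in its monomial decomposition) would yield a valid rewriting step applied to $\hat f$, contradicting normality. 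Then $[f] = \sum_i \lambda_i [m_i]$ exhibits $[f]$ in the span of classes of normal form monomials.

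Second, for linear independence, I would suppose $\sum_i \lambda_i [m_i] = 0$ in $\mathcal{C}(u,v)$ for distinct normal form monomials $m_i$, set $g = \sum_i \lambda_i m_i$, and apply Lemma~\ref{ker} to conclude that $g$ admits $0$ as a normal form. The key observation here is the reverse of the one above: $g$ is itself already a normal form, because any rewriting step applied to $g$ would select one monomial $\lambda_{i_0} m_{i_0}$ from its decomposition as the source, forcing $m_{i_0}$ to contain a redex and contradicting the hypothesis. Confluence then yields uniqueness of normal forms by the standard joining argument, so the unique normal form of $g$ is $g$ itself; combined with $0$ being a normal form of $g$, this gives $g = 0$, and distinctness of the $m_i$ forces all $\lambda_i = 0$.

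The main subtlety, and really the only non-routine step, is these two parallel observations linking ``being a normal form'' with ``having a monomial decomposition consisting of normal form monomials''. Both rely on the precise shape of a rewriting step in this linear setting, which isolates a single scalar multiple of a monomial as its source and uses the remaining monomials as the context $u$, with the automatic side condition that the isolated monomial is absent from $u$. Once this is unpacked, both halves reduce cleanly to Lemma~\ref{ker} together with the Church--Rosser-style uniqueness of normal forms under confluence.
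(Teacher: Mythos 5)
Your proof is correct and follows essentially the same route as the paper: normalization plus the observation that a normal form decomposes into normal-form monomials gives spanning, and confluence via Lemma~\ref{ker} gives linear independence. The paper's own proof is just a terser version of this; your careful unpacking of why ``$\hat f$ is a normal form'' forces each monomial of its decomposition to be a normal form (and conversely why a combination of normal-form monomials cannot be rewritten) supplies exactly the detail the paper leaves implicit.
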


\begin{proof} Let us fix two 1-cells $u$ and~$v$ of $\mathcal{C}$ with same 0-source and 0-target. Every 2-cell of $\Sigma^\ell$ with 1-source $u$ and 1-target $v$ has a normal form because $\Sigma$ is terminating. Each normal form is a linear combination of monomials in normal form because $\Sigma$ is left-monomial. So, the family of monomials in normal form is generating.
The family of monomials in normal forms is free because $\Sigma$ is confluent, which allows us to use lemma \ref{ker}. This concludes the proof. \end{proof}

\subsection{Confluence by decreasingness}

We fix in this section~$\Sigma$ a left-monomial linear~$(3,2)$-polygraph.

\begin{definition}\label{decr} Let $R$ be the set of all rewriting steps of $\Sigma$. We say $\Sigma$ is decreasing if there is a well-founded order~$\prec$ on a partition~$\mathcal{P}$ of $R$ such that, for every~$I$ and~$J$ in $\mathcal{P}$, every local branching~$(i,j)$ with~$i \in I$ and~$j \in J$ can be completed into a confluence diagram
\begin{gather*}
\begin{array}{c}
\tikz[scale=0.9]{
\node at (-0.5,1.5) {$j$};
\draw (-0.1,3) -- (-0.1,0.1);
\draw (0,3) -- (0,0);
\draw (0.1,3) -- (0.1,0.1);
\draw (-0.2,0.2) -- (0,0) -- (0.2,0.2);
\node at (1.6,3.5) {$i$};
\draw (0.2,2.9) -- (3.1,2.9);
\draw (0.2,3) -- (3.2,3);
\draw (0.2,3.1) -- (3.1,3.1);
\draw (3,2.8) -- (3.2,3) -- (3,3.2);
\node at (3.9,2.5) {$a_I$};
\draw (3.3,3) -- (3.3,2.2);
\draw (3.4,3) -- (3.4,2.1);
\draw (3.5,3) -- (3.5,2.2);
\draw (3.2,2.3) -- (3.4,2.1) -- (3.6,2.3);
\node at (3.9,1.5) {$b_J$};
\draw (3.3,2) -- (3.3,1.2);
\draw (3.4,2) -- (3.4,1.1);
\draw (3.5,2) -- (3.5,1.2);
\draw (3.2,1.3) -- (3.4,1.1) -- (3.6,1.3);
\node at (3.9,0.5) {$c$};
\draw (3.3,1) -- (3.3,0.1);
\draw (3.4,1) -- (3.4,0);
\draw (3.5,1) -- (3.5,0.1);
\draw (3.2,0.2) -- (3.4,0) -- (3.6,0.2);
\node at (0.6,-0.5) {$a_J$};
\draw (0.2,-0.1) -- (1,-0.1);
\draw (0.2,0) -- (1.1,0);
\draw (0.2,0.1) -- (1,0.1);
\draw (0.9,-0.2) -- (1.1,0) -- (0.9,0.2);
\node at (1.6,-0.5) {$b_I$};
\draw (1.2,-0.1) -- (2,-0.1);
\draw (1.2,0) -- (2.1,0);
\draw (1.2,0.1) -- (2,0.1);
\draw (1.9,-0.2) -- (2.1,0) -- (1.9,0.2);
\node at (2.6,-0.5) {$c'$};
\draw (2.2,-0.1) -- (2.9,-0.1);
\draw (2.2,0) -- (3,0);
\draw (2.2,0.1) -- (2.9,0.1);
\draw (2.8,-0.2) -- (3,0) -- (2.8,0.2);
} \end{array}
\end{gather*}
such that:
\begin{itemize}
\item $a_I$ is a rewriting sequence such that for each rewriting step~$k$ in $a_I$, there exists $K$ in $\mathcal{P}$ such that~$k \in K$ and~$K \prec I$.
\item $a_J$ is a rewriting sequence such that for each rewriting step~$k$ in $a_J$, there exists $K$ in $\mathcal{P}$ such that~$k \in K$ and~$K \prec J$.
\item $b_I$ is either an identity or an element of $I$.
\item $b_J$ is either an identity or an element of $J$.
\item $c$ and~$c'$ are rewriting sequences such that for each rewriting step~$k$ in $c$ or~$c'$, there exists $K$ in $\mathcal{P}$ such that~$k \in K$ and~$K \prec I$ or~$K \prec J$.
\end{itemize}
\end{definition}

\begin{example}\label{exdecr} Let $\Sigma_{ex}$ be the linear~$(3,2)$-polygraph with only one 0-cell, one 1-cell~$a$, two 2-cells represented by:
\begin{gather*}
\begin{array}{c}
\tikz[scale=0.5]{
\draw[color=black] (0,0) arc (0:180:0.5);
\draw[color=black] (0,-0.5) -- (0,0);
\draw[color=black] (-1,-0.5) -- (-1,0);
} \end{array}~, \qquad
\begin{array}{c}
\tikz[scale=0.5]{
\draw[color=black] (0,0) arc (360:180:0.5);
\draw[color=black] (0,0.5) -- (0,0);
\draw[color=black] (-1,0.5) -- (-1,0);
} \end{array}
\end{gather*}
and the following 3-cell:
\begin{gather*}
\begin{array}{c}
\tikz[scale=0.5]{
\draw[color=black] (0,-1) -- (0,1);
\draw[color=black] (1,0) circle (0.5);
} \end{array}
\Rrightarrow
\begin{array}{c}
\tikz[scale=0.5]{
\draw[color=black] (0,-1) -- (0,1);
\draw[color=black] (-1,0) circle (0.5);
} \end{array}
\end{gather*}
A 2-cell is said in semi-normal form if it cannot be rewritten by using a rewriting step of one of the following forms:
\begin{gather*}
\begin{array}{c}
\tikz[scale=0.5]{
\draw[color=black] (0,-1) -- (0,0);
\draw[color=black] (-1,-1) -- (-1,0);
\draw[color=black] (1,0) circle (0.5);
\draw[color=black] (0,0) arc (0:180:0.5);
} \end{array}
\Rrightarrow
\begin{array}{c}
\tikz[scale=0.5]{
\draw[color=black] (0,-1) -- (0,1);
\draw[color=black] (-2,-1) -- (-2,1);
\draw[color=black] (-1,0) circle (0.5);
\draw[color=black] (0,1) arc (0:180:1);
} \end{array}~,
\qquad
\begin{array}{c}
\tikz[scale=0.5]{
\draw[color=black] (0,1) -- (0,0);
\draw[color=black] (-1,1) -- (-1,0);
\draw[color=black] (1,0) circle (0.5);
\draw[color=black] (0,0) arc (360:180:0.5);
} \end{array}
\Rrightarrow
\begin{array}{c}
\tikz[scale=0.5]{
\draw[color=black] (0,-1) -- (0,1);
\draw[color=black] (-2,-1) -- (-2,1);
\draw[color=black] (-1,0) circle (0.5);
\draw[color=black] (0,-1) arc (360:180:1);
} \end{array}
\end{gather*}
We give a partition~$\{ P_n|n \in \mathbb{N} \}$ of the set of rewriting steps of $\Sigma_{ex}$  such that, for any rewriting step~$\alpha$, we have~$\alpha \in P_n$ if the shortest rewriting sequence from $t_2(\alpha)$ to a semi-normal form is of length~$n$. This is a partition of the set of rewriting steps of $\Sigma_{ex}$ because every 2-cell rewrites into a semi-normal form. We define on~$\{ P_n|n \in \mathbb{N} \}$ the order~$\prec$ by $P_n \prec P_m$ if~$n<m$. With this well-ordered partition given,~$\Sigma_{ex}$ is decreasing. This translates the fact any 2-cell has a unique semi-normal form.
\end{example}

The following result is proved in \cite{VO} in the case of abstract rewriting systems. The proof can be adapted to the case of linear~$(3,2)$-polygraphs.

\begin{theorem}\label{VO} Let $\Sigma$ be a left-monomial libear $(3,2)$-polygraph. if~$\Sigma$ is decreasing, then $\Sigma$ is confluent. \end{theorem}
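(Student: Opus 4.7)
The plan is to adapt van Oostrom's original proof of the decreasingness theorem from abstract rewriting systems to the present linear setting. Decreasingness provides a well-founded strictly ordered partition $(\mathcal{P},\prec)$ of the set of rewriting steps of $\Sigma$; the multiset extension $\prec_{\mathrm{mul}}$ of $\prec$ on finite multisets of elements of $\mathcal{P}$ is then well-founded by the classical theorem of Dershowitz--Manna. To each rewriting sequence $\sigma$ of $\Sigma$ I associate its label $\ell(\sigma)$, the finite multiset of classes of its constituent rewriting steps, and to a branching $(\alpha,\beta)$ I associate the multiset sum $\ell(\alpha,\beta) = \ell(\alpha)\cup\ell(\beta)$. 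Confluence will be proved by well-founded induction over $\prec_{\mathrm{mul}}$ applied to $\ell(\alpha,\beta)$.

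The base case, in which $\ell(\alpha,\beta) = \emptyset$, is trivial because then $\alpha$ and $\beta$ are identities. For the inductive step I decompose $\alpha = \alpha_1 \star_2 \alpha'$ and $\beta = \beta_1 \star_2 \beta'$ with $\alpha_1 \in I$ and $\beta_1 \in J$ the leading rewriting steps. The local branching $(\alpha_1,\beta_1)$ is completed by Definition \ref{decr} into a diamond whose two sides factor as $a_I \star_2 b_J \star_2 c$ and $a_J \star_2 b_I \star_2 c'$, with the size control on each factor spelled out there. Combining these sides with the tails $\alpha'$ and $\beta'$ produces finitely many new branchings whose leading portions all satisfy the key bookkeeping identity: every label appearing in $a_I, c$ (resp.\ $a_J, c'$) belongs to some class strictly below $I$ or $J$, while $b_I$ (resp.\ $b_J$) contributes at most one step in $I$ (resp.\ $J$) which replaces the already-counted $\alpha_1$ (resp.\ $\beta_1$). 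Thus each resulting sub-branching has a label strictly smaller than $\ell(\alpha,\beta)$ for $\prec_{\mathrm{mul}}$, the induction hypothesis applies, and pasting the joins supplied by it yields a common target for $\alpha$ and $\beta$.

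The adaptation to the linear $(3,2)$-setting is essentially notational. A rewriting step in $\Sigma$, as defined in the previous subsection, modifies a single monomial inside a 2-cell and leaves the ``parasitic'' summand $u$ untouched; hence two rewriting steps applied to the same 2-cell interact either on the same monomial (producing a branching analogous to an abstract one) or on disjoint monomials (producing an additive or Peiffer-type branching that is confluent in one step, with trivial label contribution). The 3-composition $\star_2$ and Lemma \ref{AP} ensure that all 3-cells relevant to joining branchings decompose into rewriting steps to which the partition $\mathcal{P}$ assigns labels, so the multiset-label machinery applies uniformly.

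The main obstacle is the careful bookkeeping that the multiset of labels strictly decreases when one stacks the decreasingness diamond onto an arbitrarily long tail. The standard trick, already present in van Oostrom's paper \cite{VO}, is to prove a strengthened statement in which one controls not merely the total multiset but also the ``leading'' label of each side, so that the decomposition into pieces with smaller leading labels and pieces below them in $\prec$ drops the whole multiset rather than inflating it. Once this invariant is set up, the induction closes routinely, and confluence of $\Sigma$ follows.
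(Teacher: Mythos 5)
Your overall strategy coincides with the paper's: well-founded induction on a label-derived measure of a branching, resolving the leading local branching $(\alpha_1,\beta_1)$ by the decreasingness diagram, resolving the two remaining corners by the induction hypothesis, and pasting (this is exactly the paper's D/IH1/IH2 decomposition, with the same trivial base case).

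There is, however, a genuine gap in the induction measure. You take $\ell(\sigma)$ to be the \emph{plain} multiset of the classes of all steps of $\sigma$ and order branchings by $\prec_{\mathrm{mul}}$ applied to $\ell(\alpha)+\ell(\beta)$. This measure does not decrease when you pass from $(\alpha,\beta)$ to the sub-branchings obtained by inserting the decreasing diagram: the sequences $a_I$ and $c$ may contain arbitrarily many steps whose classes are $\prec J$ but not $\prec I$, and in the sub-branching $(a_I\star_2 b_J\star_2 c,\ \alpha')$ the only labels removed relative to $\ell(\alpha,\beta)$ are $I$ and $\ell(\beta')$, which need not dominate those new labels; so the new multiset need not be $\prec_{\mathrm{mul}}$-smaller. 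Worse, no strengthening of the inductive statement can rescue the plain multiset: the completed side $\alpha_1\star_2 a_I\star_2 b_J\star_2 c$ already has a strictly \emph{larger} plain multiset than $\{I,J\}$ whenever $a_I$ or $c$ is nonempty, since elements are only added. The fix --- which is where essentially all the content of the proof lies, and which you only gesture at as ``the standard trick'' --- is to change the measure itself to the filtered weight $|I\sigma|=|I|+|\sigma^{(I)}|$ (later labels dominated by an earlier one are discarded), to strengthen the inductive claim so that the two completed sides $\alpha\star_2\tau$ and $\beta\star_2\sigma$ have weight $\preccurlyeq'$ that of the branching (the paper's conditions (\ref{cnf2}) and (\ref{cnf3})), and then to verify the two inequalities that make the induction close: $|(\delta_1,\gamma_2)|\prec'|(\delta_0,\gamma_1\star_2\gamma_2)|$ when $\gamma_1$ is not an identity, and the pasting property for stacked confluence diagrams. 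As written, your proposal identifies the obstacle but supplies neither the correct measure nor these verifications, so the induction does not go through.
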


\begin{proof} Let us assume $\Sigma$ is decreasing for a partition~$\mathcal{P}$ of the set of its rewriting steps and an order~$\prec$ on~$\mathcal{P}$. We introduce the following map $|\cdots  |$ from the free monoid~$\mathcal{P}^*$ over~$\mathcal{P}$ to~$\mathbb{N}^{\mathcal{P}}$:
\begin{itemize}
\item if~$\epsilon$ is the empty word of $\mathcal{P}^*$, for every~$K$ in $\mathcal{P}$, we have~$|\epsilon |(K)=0$,
\item for every~$I$ in $\mathcal{P}$ and every~$K$ in $\mathcal{P}$, we have~$|I|(I)=1$ and~$|I|(K)=0$ if~$I \neq K$,
\item for every~$I$ in $\mathcal{P}$ and every word~$\sigma$ of $\mathcal{P}^*$, we have~$|I\sigma|=|I|+|\sigma^{(I)}|$ where $\sigma^{(I)}$ denotes the word~$\sigma$ without the letters $J$ such that~$J \prec I$.
\end{itemize}
We remark that for every words $\sigma_1$ and~$\sigma_2$ of $\mathcal{P}^*$, we have:
$$|\sigma_1\sigma_2|=|\sigma_1|+|\sigma_2|^{(\sigma_1)},$$
where:
$$|\sigma_2|^{(\sigma_1)}(K)=\begin{cases} 0 &\mbox{if there exists $I$ in $\mathcal{P}$ such that~$K \prec I$ and~$|\sigma_1|(I) \neq 0$} \\
|\sigma_2|(K) & \mbox{otherwise}\end{cases} $$

Then, we extend~$|\cdots  |$ to the set of finite rewriting sequences of $\Sigma$ by defining~$|r_1\cdots r_n|=|K_1\cdots K_n|$ for every rewriting sequence
\begin{gather*}
\begin{array}{c}
\tikz[scale=0.5]{
\node at (0,0) {$u_0$};
\node at (1,0) {$\Rrightarrow$};
\node at (1,0.5) {$r_1$};
\node at (2,0) {$\cdots$};
\node at (3,0) {$\Rrightarrow$};
\node at (3,0.5) {$r_n$};
\node at (4,0) {$u_n$};
} \end{array}
\end{gather*}
such that for each $1\leqslant i \leqslant n$ we have~$r_i \in K_i$.

We extend finally~$|\cdots  |$ to the set of branchings of $\Sigma$ made of finite rewriting sequences by defining~$|(\alpha,\beta)|=|\alpha|+|\beta|$ for every finite rewriting sequences $\alpha$ and~$\beta$.

We now define a strict order~$\prec'$ on~$\mathbb{N}^{\mathcal{P}}$. For any $M$ and~$N$ in $\mathbb{N}^{\mathcal{P}}$, we define $M\prec'N$ if there exist~$X$,~$Y$ and~$Z$ in $\mathbb{N}^{\mathcal{P}}$ such that:
\begin{itemize}
\item $M=Z+X$,
\item $N=Z+Y$,
\item $Y$ is not zero,
\item for every~$I$ in $\mathcal{P}$ such that~$M(I)\neq 0$, there exists $J$ in $\mathcal{P}$ such that~$N(J)\neq 0$ and~$I\prec J$.
\end{itemize}
The order~$\prec'$ is well-founded because $\prec$ is. We call $\preccurlyeq'$ the symmetric closure of $\prec'$.

To prove that~$\Sigma$ is confluent, it is sufficient to prove that every branching~$(\alpha,\beta)$ made of finite rewriting sequences can be completed into a confluence diagram $(\alpha \star_2\tau,\beta \star_2 \sigma)$
\begin{gather*}
\begin{array}{c}
\tikz[scale=0.9]{
\draw[color=black] (0,0) -- (0,-1.1);
\draw[color=black] (-0.1,0) -- (-0.1,-1);
\draw[color=black] (0.1,0) -- (0.1,-1);
\draw[color=black] (-0.2,-0.9) -- (0,-1.1) -- (0.2,-0.9);
\node at (-0.4,-0.55) {$\beta$};
\draw[color=black] (0.1,0.2) -- (1.2,0.2);
\draw[color=black] (0.1,0.1) -- (1.1,0.1);
\draw[color=black] (0.1,0.3) -- (1.1,0.3);
\draw[color=black] (1,0) -- (1.2,0.2) -- (1,0.4);
\node at (0.65,0.5) {$\alpha$};
\draw[color=black] (1.3,0) -- (1.3,-1.1);
\draw[color=black] (1.2,0) -- (1.2,-1);
\draw[color=black] (1.4,0) -- (1.4,-1);
\draw[color=black] (1.1,-0.9) -- (1.3,-1.1) -- (1.5,-0.9);
\node at (1.7,-0.55) {$\tau$};
\draw[color=black] (0.1,-1.2) -- (1.2,-1.2);
\draw[color=black] (0.1,-1.3) -- (1.1,-1.3);
\draw[color=black] (0.1,-1.1) -- (1.1,-1.1);
\draw[color=black] (1,-1.4) -- (1.2,-1.2) -- (1,-1);
\node at (0.65,-1.6) {$\sigma$};
} \end{array}
\end{gather*}
such that:
\begin{equation}\label{cnf2} |\alpha \star_2\tau| \preccurlyeq' |(\alpha,\beta)|, \end{equation}
\begin{equation}\label{cnf3} |\beta \star_2\sigma| \preccurlyeq' |(\alpha,\beta)|. \end{equation}
We will prove this fact by induction on~$|(\alpha,\beta)|$. This is trivial when $|(\alpha,\beta)|$ is minimal because this is the case of a trivial branching made of two identities.

Let us now assume that for each branching~$(\alpha',\beta')$ made of finite rewriting sequences such that~$|(\alpha',\beta')|\prec'|(\alpha,\beta)|$, we can complete $(\alpha',\beta')$ into a confluence diagram verifying (\ref{cnf2}) and (\ref{cnf3}). For every diagram of the following form:
\begin{gather*}
\begin{array}{c}
\tikz[scale=0.9]{
\draw[color=black] (0,0) -- (0,-1.1);
\draw[color=black] (-0.1,0) -- (-0.1,-1);
\draw[color=black] (0.1,0) -- (0.1,-1);
\draw[color=black] (-0.2,-0.9) -- (0,-1.1) -- (0.2,-0.9);
\node at (-0.4,-0.55) {$\delta_0$};
\draw[color=black] (0.1,0.2) -- (1.2,0.2);
\draw[color=black] (0.1,0.1) -- (1.1,0.1);
\draw[color=black] (0.1,0.3) -- (1.1,0.3);
\draw[color=black] (1,0) -- (1.2,0.2) -- (1,0.4);
\node at (0.65,0.5) {$\gamma_1$};
\draw[color=black] (1.3,0) -- (1.3,-1.1);
\draw[color=black] (1.2,0) -- (1.2,-1);
\draw[color=black] (1.4,0) -- (1.4,-1);
\draw[color=black] (1.1,-0.9) -- (1.3,-1.1) -- (1.5,-0.9);
\node at (1.7,-0.55) {$\delta_1$};
\draw[color=black] (0.1,-1.2) -- (1.2,-1.2);
\draw[color=black] (0.1,-1.3) -- (1.1,-1.3);
\draw[color=black] (0.1,-1.1) -- (1.1,-1.1);
\draw[color=black] (1,-1.4) -- (1.2,-1.2) -- (1,-1);
\node at (0.65,-1.6) {$\tau$};
\draw[color=black] (1.4,0.2) -- (2.5,0.2);
\draw[color=black] (1.4,0.1) -- (2.4,0.1);
\draw[color=black] (1.4,0.3) -- (2.4,0.3);
\draw[color=black] (2.3,0) -- (2.5,0.2) -- (2.3,0.4);
\node at (1.95,0.5) {$\gamma_2$};
} \end{array}
\end{gather*}
such that~$|\gamma_1 \star_2 \delta_1| \preccurlyeq' |(\delta_0,\gamma_1)|$, we have~$|(\delta_1,\gamma_2)|\prec'|(\delta_0,\gamma_1 \star_2 \gamma_2)|$ if~$\gamma_1$ is not an identity. Indeed:
$$|(\delta_1,\gamma_2)|\prec' |\gamma_1|+|(\delta_1,\gamma_2)|^{(\gamma_1)}=|\gamma_1 \star_2 \delta_1|+|\gamma_2|^{(\gamma_1)} \preccurlyeq' |(\delta_0,\gamma_1)|+|\gamma_2|^{(\gamma_1)} = |(\delta_0,\gamma_1 \star_2 \gamma_2)|.$$

Let us also remark that for every diagram of the form:
\begin{gather*}
\begin{array}{c}
\tikz[scale=0.9]{
\draw[color=black] (0,0) -- (0,-1.1);
\draw[color=black] (-0.1,0) -- (-0.1,-1);
\draw[color=black] (0.1,0) -- (0.1,-1);
\draw[color=black] (-0.2,-0.9) -- (0,-1.1) -- (0.2,-0.9);
\node at (-0.4,-0.55) {$\delta_0$};
\draw[color=black] (0.1,0.2) -- (1.2,0.2);
\draw[color=black] (0.1,0.1) -- (1.1,0.1);
\draw[color=black] (0.1,0.3) -- (1.1,0.3);
\draw[color=black] (1,0) -- (1.2,0.2) -- (1,0.4);
\node at (0.65,0.5) {$\gamma_1$};
\draw[color=black] (1.3,0) -- (1.3,-1.1);
\draw[color=black] (1.2,0) -- (1.2,-1);
\draw[color=black] (1.4,0) -- (1.4,-1);
\draw[color=black] (1.1,-0.9) -- (1.3,-1.1) -- (1.5,-0.9);
\node at (1.7,-0.55) {$\delta_1$};
\draw[color=black] (0.1,-1.2) -- (1.2,-1.2);
\draw[color=black] (0.1,-1.3) -- (1.1,-1.3);
\draw[color=black] (0.1,-1.1) -- (1.1,-1.1);
\draw[color=black] (1,-1.4) -- (1.2,-1.2) -- (1,-1);
\node at (0.65,-1.6) {$\tau_1$};
\draw[color=black] (1.4,0.2) -- (2.5,0.2);
\draw[color=black] (1.4,0.1) -- (2.4,0.1);
\draw[color=black] (1.4,0.3) -- (2.4,0.3);
\draw[color=black] (2.3,0) -- (2.5,0.2) -- (2.3,0.4);
\node at (1.95,0.5) {$\gamma_2$};
\draw[color=black] (2.6,0) -- (2.6,-1.1);
\draw[color=black] (2.5,0) -- (2.5,-1);
\draw[color=black] (2.7,0) -- (2.7,-1);
\draw[color=black] (2.4,-0.9) -- (2.6,-1.1) -- (2.8,-0.9);
\node at (3,-0.55) {$\delta_2$};
\draw[color=black] (1.4,-1.2) -- (2.5,-1.2);
\draw[color=black] (1.4,-1.3) -- (2.4,-1.3);
\draw[color=black] (1.4,-1.1) -- (2.4,-1.1);
\draw[color=black] (2.3,-1.4) -- (2.5,-1.2) -- (2.3,-1);
\node at (1.95,-1.6) {$\tau_2$};
} \end{array}
\end{gather*}
such that:
\begin{itemize}
\item $|\delta_0 \star_2 \tau_1| \preccurlyeq' |(\delta_0,\gamma_1)|$ and~$|\gamma_1 \star_2 \delta_1| \preccurlyeq' |(\delta_0,\gamma_1)|$,
\item $|\delta_1 \star_2 \tau_2| \preccurlyeq' |(\delta_1,\gamma_2)|$ and~$|\gamma_2 \star_2 \delta_2| \preccurlyeq' |(\delta_1,\gamma_2)|$
\end{itemize}
We have the \emph{pasting property} $|\delta_0 \star_2 \tau_1 \star_2 \tau_2| \preccurlyeq' |(\delta_0,\gamma_1 \star_2 \gamma_2)|$ and~$|\gamma_1 \star_2 \gamma_2 \star_2 \delta_2| \preccurlyeq' |(\delta_0,\gamma_1 \star_2 \gamma_2)|$. Indeed:
$$|\delta_0 \star_2 \tau_1 \star_2 \tau_2|=|\delta_0 \star_2 \tau_1|+|\tau_2|^{(\delta_0 \star_2 \tau_1)} \preccurlyeq' |(\delta_0,\gamma_1)|+|\tau_2|^{(\delta_0 \star_2 \tau_1)(\gamma_1)} \preccurlyeq' |(\delta_0,\gamma_1 \star_2 \gamma_2)|,$$
$$|\gamma_1 \star_2 \gamma_2 \star_2 \delta_2|=|\gamma_1 \star_2 \gamma_2|+|\delta_2|^{(\gamma_1)(\gamma_2)} \preccurlyeq' |\gamma_1 \star_2 \gamma_2|+|\delta_2|^{(\gamma_2)} \preccurlyeq' |(\delta_0,\gamma_1 \star_2 \gamma_2)|.$$

To prove~$(\alpha,\beta)$ is confluent when $\alpha$ and~$\beta$ have nonzero length, we consider~$\alpha_0$ the first step of $\alpha$ and~$\beta_0$ the first step of $\beta$. We have then the following confluence diagram:
\begin{gather*}
\begin{array}{c}
\tikz[scale=0.9]{
\draw[color=black] (0.2,0.2) -- (1.3,1.3);
\draw[color=black] (0.1,0.3) -- (1.1,1.3);
\draw[color=black] (0.3,0.1) -- (1.3,1.1);
\draw[color=black] (1,1.3) -- (1.3,1.3) -- (1.3,1);
\node at (3.15,1.4) {IH1};
\node at (0.5,1) {$\alpha_0$};
\draw[color=black] (1.8,1.8) -- (2.8,2.8);
\draw[color=black] (1.7,1.9) -- (2.6,2.8);
\draw[color=black] (1.9,1.7) -- (2.8,2.6);
\draw[color=black] (2.5,2.8) -- (2.8,2.8) -- (2.8,2.5);
\node at (3.1,3) {$t_2(\alpha)$};
\draw[color=black] (0.2,-0.2) -- (1.3,-1.3);
\draw[color=black] (0.1,-0.3) -- (1.1,-1.3);
\draw[color=black] (0.3,-0.1) -- (1.3,-1.1);
\draw[color=black] (1,-1.3) -- (1.3,-1.3) -- (1.3,-1);
\node at (0.5,-1) {$\beta_0$};
\draw[color=black] (1.8,-1.8) -- (2.8,-2.8);
\draw[color=black] (1.7,-1.9) -- (2.6,-2.8);
\draw[color=black] (1.9,-1.7) -- (2.8,-2.6);
\draw[color=black] (2.5,-2.8) -- (2.8,-2.8) -- (2.8,-2.5);
\node at (3.1,-3) {$t_2(\beta)$};
\draw[color=black] (2.9,0.2) -- (1.8,1.3);
\draw[color=black] (2.9,0.4) -- (1.9,1.4);
\draw[color=black] (2.7,0.2) -- (1.7,1.2);
\draw[color=black] (2.9,0.5) -- (2.9,0.2) -- (2.6,0.2);
\draw[color=black] (2.9,-0.2) -- (1.8,-1.3);
\draw[color=black] (2.9,-0.4) -- (1.9,-1.4);
\draw[color=black] (2.7,-0.2) -- (1.7,-1.2);
\draw[color=black] (2.9,-0.5) -- (2.9,-0.2) -- (2.6,-0.2);
\node at (1.55,0) {D};
\draw[color=black] (4.5,1.7) -- (3.5,2.8);
\draw[color=black] (4.5,1.9) -- (3.6,2.9);
\draw[color=black] (4.3,1.7) -- (3.4,2.7);
\draw[color=black] (4.5,2) -- (4.5,1.7) -- (4.2,1.7);
\draw[color=black] (3.3,0.2) -- (4.4,1.3);
\draw[color=black] (3.2,0.3) -- (4.2,1.3);
\draw[color=black] (3.4,0.1) -- (4.4,1.1);
\draw[color=black] (4.1,1.3) -- (4.4,1.3) -- (4.4,1);
\draw[color=black] (5,1.2) -- (6,0.2);
\draw[color=black] (5.1,1.3) -- (6,0.4);
\draw[color=black] (4.9,1.1) -- (5.8,0.2);
\draw[color=black] (5.7,0.2) -- (6,0.2) -- (6,0.5);
\draw[color=black] (5.8,-0.2) -- (3.5,-2.5);
\draw[color=black] (6,-0.2) -- (3.6,-2.6);
\draw[color=black] (6,-0.4) -- (3.7,-2.7);
\draw[color=black] (5.7,-0.2) -- (6,-0.2) -- (6,-0.5);
\node at (3.9,-0.8) {IH2};
} \end{array}
\end{gather*}
Where D verifies (\ref{cnf2}) and (\ref{cnf3}) by decreasingness of $\Sigma$, the diagram IH1 exists because of the induction hypothesis and verifies (\ref{cnf2}) and (\ref{cnf3}) by the pasting property. Finally, the induction hypothesis allows us to construct IH2. The pasting property proves all the diagram verifies (\ref{cnf2}) and (\ref{cnf3}). This proves in particular by well-founded induction~$\Sigma$ is confluent. \end{proof}

\begin{example} The linear~$(3,2)$-polygraph~$\Sigma_{ex}$ from \ref{exdecr} is decreasing and is therefore confluent. Because every 2-cell rewrites into a semi-normal form and the semi-normal form of a 2-cell is unique, we easily conclude $\Sigma_{ex}$ is indeed confluent.
\end{example}


\section{The affine oriented Brauer category}

\subsection{Dotted oriented Brauer diagrams with bubbles}

After recalling the definition of the affine oriented Brauer category~$\mathcal{AOB}$ from \cite{BCNR}, we will present it by a linear~$(3,2)$-polygraph.

\subsubsection{Dotted oriented Brauer diagrams with bubbles}

A \emph{dotted oriented Brauer diagram with bubbles} is a planar diagram such that:
\begin{itemize}
\item edges are oriented,
\item edges are either bubbles or have a boundary as source and target,
\item each edge is decorated with an arbitrary number of dots not allowed to pass through the crossings.
\end{itemize}

\subsubsection{Equivalence of dotted oriented Brauer diagrams with bubbles}\label{equivdiag}

Two dotted oriented Brauer diagrams are \emph{equivalent} if one can be transformed into the other with isotopies and Reidemeister moves. A description of those moves can be found in \cite{Turaev}. An isotopy can move a dot along an edge but cannot make a dot go through a crossing. A dotted oriented Brauer diagram is \emph{normally ordered} if:
\begin{itemize}
\item all bubbles are clockwise,
\item all bubbles are in the leftmost side region,
\item all dots are either on a bubble or a segment pointing toward a boundary.
\end{itemize}

\begin{example} Here is an example of dotted oriented Brauer diagram:
\begin{gather*}
\begin{array}{c}
\tikz[scale=0.5]{
\draw[color=black, ->] (0,0) -- (0,2);
\draw[color=black, ->] (0.5,0) arc (180:0:0.5);
\draw[color=black, ->] (-0.5,2) arc (180:360:1);
\draw[color=black, ->] (-1,1) arc (360:0:0.5);
\draw[thick] plot[mark=*] (-2,1);
\draw[thick] plot[mark=*] (0,0.75);
} \end{array}
\end{gather*}
This diagram is not normally ordered. The following one is normally ordered:
\begin{gather*}
\begin{array}{c}
\tikz[scale=0.5]{
\draw[color=black, ->] (0,0) -- (0,2);
\draw[color=black, ->] (0.5,0) arc (180:0:0.5);
\draw[color=black, ->] (-0.5,2) arc (180:360:1);
\draw[color=black, ->] (-1,1) arc (360:0:0.5);
\draw[thick] plot[mark=*] (-2,1);
\draw[thick] plot[mark=*] (0,1.6);
} \end{array}
\end{gather*}
Those diagrams are not equivalent.
\end{example}

Dotted oriented Brauer diagrams with bubbles will be described as 2-cells of a 2-category with vertical and horizontal concatenation respectively standing as 1-composition and 0-composition.

\begin{definition}\label{defAOB} The affine oriented Brauer category~$\mathcal{AOB}$ is the linear~$(2,2)$-category with one 0-cell, two generating 1-cells and with 2-cells from $a$ to~$b$ given by linear combinations of dotted oriented Brauer diagrams with bubbles with source $a$ and target $b$ subject to the following relations:
\begin{itemize}
\item invariance by equivalence given in \ref{equivdiag},
\item
\begin{gather*}
\begin{array}{c}
\tikz[scale=0.5]{
\draw[color=black, ->] (0,0) -- (1,2);
\draw[color=black, ->] (1,0) -- (0,2);
\draw[thick] plot[mark=*] (0.75,0.5);
} \end{array}
=
\begin{array}{c}
\tikz[scale=0.5]{
\draw[color=black, ->] (0,0) -- (1,2);
\draw[color=black, ->] (1,0) -- (0,2);
\draw[thick] plot[mark=*] (0.25,1.5);
} \end{array}
+
\begin{array}{c}
\tikz[scale=0.5]{
\draw[color=black, ->] (0,0) -- (0,2);
\draw[color=black, ->] (1,0) -- (1,2);
} \end{array}
\end{gather*}
\end{itemize}
\end{definition}

\subsubsection{An equational presentation of~$\mathcal{AOB}$} There is a presentation by generators and relations of the linear $(2,2)$-category~$\mathcal{AOB}$ given in \cite{BCNR}. The category~$\mathcal{AOB}$ is the linear~$(2,2)$-category with only one 0-cell, two generating 1-cells $\wedge$ and~$\vee$ and five generating 2-cells
$$1 \odfl{c} \wedge \vee, \qquad \vee \wedge \odfl{d} 1, \qquad \wedge \wedge \odfl{s} \wedge \wedge, \qquad \wedge \vee \odfl{t} \vee \wedge,  \qquad \wedge \odfl{x} \wedge$$
respectively represented by:
\begin{gather*}
\begin{array}{c}
\tikz[scale=0.5]{
\draw[color=black, ->] (0,0) -- (0,0.25);
\draw[color=black] (0,0) -- (0,0.5);
\draw[color=black, ->] (1,0.5) -- (1,0.25);
\draw[color=black] (1,0) -- (1,0.5);
\draw[color=black] (1,0.5) arc (0:180:0.5);
} \end{array}
~, \qquad
\begin{array}{c}
\tikz[scale=0.5]{
\draw[color=black, ->] (0,0.5) -- (0,0.25);
\draw[color=black] (0,0) -- (0,0.5);
\draw[color=black, ->] (1,0) -- (1,0.25);
\draw[color=black] (1,0) -- (1,0.5);
\draw[color=black] (1,0) arc (360:180:0.5);
} \end{array}
~, \qquad
\begin{array}{c}
\tikz[scale=0.5]{
\draw[color=black] (0,0) -- (0,0.5) -- (1,1.5) -- (1,2);
\draw[color=black] (1,0) -- (1,0.5) -- (0,1.5) -- (0,2);
\draw[color=black, ->] (0,0) -- (0,0.25);
\draw[color=black, ->] (1,1.5) -- (1,1.75);
\draw[color=black, ->] (1,0) -- (1,0.25);
\draw[color=black, ->] (0,1.5) -- (0,1.75);
} \end{array}
~, \qquad
\begin{array}{c}
\tikz[scale=0.5]{
\draw[color=black] (0,0) -- (0,0.5) -- (1,1.5) -- (1,2);
\draw[color=black] (1,0) -- (1,0.5) -- (0,1.5) -- (0,2);
\draw[color=black, ->] (0,0.5) -- (0,0.25);
\draw[color=black, ->] (1,2) -- (1,1.75);
\draw[color=black, ->] (1,0) -- (1,0.25);
\draw[color=black, ->] (0,1.5) -- (0,1.75);
} \end{array}
~, \qquad
\begin{array}{c}
\tikz[scale=0.5]{
\draw[color=black, ->] (0,0) -- (0,0.5);
\draw[color=black, ->] (0,0.5) -- (0,1.5);
\draw[color=black] (0,1.5) -- (0,2);
\draw[thick] plot[mark=*] (0,1);
} \end{array}
\end{gather*}
subject to the following relations: 
\begin{gather*}
\begin{array}{c}
\tikz[scale=0.5]{
\draw[color=black] (-1,0) -- (-1,1);
\draw[color=black] (1,1) -- (1,2);
\draw[color=black] (-1,1) arc (180:0:0.5);
\draw[color=black] (1,1) arc (360:180:0.5);
\draw[color=black, ->] (-1,0) -- (-1,0.5);
\draw[color=black, ->] (1,1) -- (1,1.5);
\draw[color=black, ->] (0,1.01) -- (0,1);
} \end{array}
=
\begin{array}{c}
\tikz[scale=0.5]{
\draw[color=black] (0,0) -- (0,2);
\draw[color=black, ->] (0,0) -- (0,1);
} \end{array}
\qquad
\begin{array}{c}
\tikz[scale=0.5]{
\draw[color=black] (-1,0) -- (-1,-1);
\draw[color=black] (1,-1) -- (1,-2);
\draw[color=black] (-1,-1) arc (180:360:0.5);
\draw[color=black] (1,-1) arc (0:180:0.5);
\draw[color=black, ->] (-1,0) -- (-1,-0.5);
\draw[color=black, ->] (1,-1) -- (1,-1.5);
\draw[color=black, ->] (0,-1.01) -- (0,-1);
} \end{array}
=
\begin{array}{c}
\tikz[scale=0.5]{
\draw[color=black] (0,0) -- (0,-2);
\draw[color=black, ->] (0,0) -- (0,-1);
} \end{array} \\
\begin{array}{c}
\tikz[scale=0.5]{
\draw[color=black, ->] (0,0) -- (0,0.25);
\draw[color=black, ->] (1,0) -- (1,0.25);
\draw[color=black, ->] (0,0.25) -- (0,0.5) -- (1,1.5) -- (1,1.75);
\draw[color=black, ->] (1,0.25) -- (1,0.5) -- (0,1.5) -- (0,1.75);
\draw[color=black, ->] (0,1.75) -- (0,2) -- (1,3) -- (1,3.25);
\draw[color=black, ->] (1,1.75) -- (1,2) -- (0,3) -- (0,3.25);
\draw[color=black] (1,3.5) -- (1,3.25);
\draw[color=black] (0,3.5) -- (0,3.25);
} \end{array}
=
\begin{array}{c}
\tikz[scale=0.5]{
\draw[color=black, ->] (0,0) -- (0,1.75);
\draw[color=black] (0,1.75) -- (0,3.5);
\draw[color=black, ->] (1,0) -- (1,1.75);
\draw[color=black] (1,1.75) -- (1,3.5);
} \end{array}
\qquad
\begin{array}{c}
\tikz[scale=0.5]{
\draw[color=black] (-1,0) -- (-1,0.5) -- (0,1.5) -- (0,2) -- (1,3) -- (1,5);
\draw[color=black, ->] (-1,0) -- (-1,0.25);
\draw[color=black, ->] (0,1.5) -- (0,1.75);
\draw[color=black, ->] (1,3) -- (1,4);
\draw[color=black] (0,0) -- (0,0.5) -- (-1,1.5) -- (-1,3.5) -- (0,4.5) -- (0,5);
\draw[color=black, ->] (0,0) -- (0,0.25);
\draw[color=black, ->] (-1,1.5) -- (-1,2.5);
\draw[color=black, ->] (0,4.5) -- (0,4.75);
\draw[color=black] (1,0) -- (1,2) -- (0,3) -- (0,3.5) -- (-1,4.5) -- (-1,5);
\draw[color=black, ->] (1,0) -- (1,1);
\draw[color=black, ->] (0,3) -- (0,3.25);
\draw[color=black, ->] (-1,4.5) -- (-1,4.75);
} \end{array}
=
\begin{array}{c}
\tikz[scale=0.5]{
\draw[color=black] (1,0) -- (1,0.5) -- (0,1.5) -- (0,2) -- (-1,3) -- (-1,5);
\draw[color=black, ->] (1,0) -- (1,0.25);
\draw[color=black, ->] (0,1.5) -- (0,1.75);
\draw[color=black, ->] (-1,3) -- (-1,4);
\draw[color=black] (0,0) -- (0,0.5) -- (1,1.5) -- (1,3.5) -- (0,4.5) -- (0,5);
\draw[color=black, ->] (0,0) -- (0,0.25);
\draw[color=black, ->] (1,1.5) -- (1,2.5);
\draw[color=black, ->] (0,4.5) -- (0,4.75);
\draw[color=black] (-1,0) -- (-1,2) -- (0,3) -- (0,3.5) -- (1,4.5) -- (1,5);
\draw[color=black, ->] (-1,0) -- (-1,1);
\draw[color=black, ->] (0,3) -- (0,3.25);
\draw[color=black, ->] (1,4.5) -- (1,4.75);
} \end{array}
\qquad
\begin{array}{c}
\tikz[scale=0.5]{
\draw[color=black] (0,0) -- (0,1) -- (1,2) -- (1,3);
\draw[color=black] (1,0) -- (1,1) -- (0,2) -- (0,3);
\draw[color=black, ->] (1,0) -- (1,0.25);
\draw[thick] plot[mark=*] (1,0.5);
\draw[color=black, ->] (0,0) -- (0,0.5);
\draw[color=black, ->] (1,0.5) -- (1,0.8);
\draw[color=black, ->] (0,2) -- (0,2.5);
\draw[color=black, ->] (1,2) -- (1,2.5);
} \end{array}
=
\begin{array}{c}
\tikz[scale=0.5]{
\draw[color=black] (0,0) -- (0,1) -- (1,2) -- (1,3);
\draw[color=black] (1,0) -- (1,1) -- (0,2) -- (0,3);
\draw[color=black, ->] (0,2.5) -- (0,2.8);
\draw[thick] plot[mark=*] (0,2.5);
\draw[color=black, ->] (0,0) -- (0,0.5);
\draw[color=black, ->] (1,0) -- (1,0.5);
\draw[color=black, ->] (0,2) -- (0,2.25);
\draw[color=black, ->] (1,2) -- (1,2.5);
} \end{array}
+
\begin{array}{c}
\tikz[scale=0.5]{
\draw[color=black] (0,0) -- (0,3);
\draw[color=black] (1,0) -- (1,3);
\draw[color=black, ->] (0,0) -- (0,1.5);
\draw[color=black, ->] (1,0) -- (1,1.5);
} \end{array} \\
\begin{array}{c}
\tikz[scale=0.5]{
\draw[color=black] (0,0) -- (0,2) -- (-1,3) -- (-1,5);
\draw[color=black] (1,0) -- (1,3.5);
\draw[color=black] (-1,1.5) -- (-1,2) -- (0,3) -- (0,3.5);
\draw[color=black] (-2,5) -- (-2,1.5);
\draw[color=black, ->] (0,0) -- (0,1);
\draw[color=black, ->] (-1,3.5) -- (-1,4.25);
\draw[color=black, ->] (-1,1.5) -- (-1,1.75);
\draw[color=black, ->] (0,3) -- (0,3.25);
\draw[color=black, ->] (-2,5) -- (-2,3.25);
\draw[color=black, ->] (1,3.5) -- (1,1.75);
\draw[color=black] (0,-1.5) -- (0,-1) -- (1,0);
\draw[color=black] (1,-1.5) -- (1,-1) -- (0,0);
\draw[color=black, ->] (0,-1) -- (0,-1.25);
\draw[color=black, ->] (1,-1.5) -- (1,-1.25);
\draw[color=black] (1,3.5) arc (0:180:0.5);
\draw[color=black] (-1,1.5) arc (360:180:0.5);
} \end{array}
=
\begin{array}{c}
\tikz[scale=0.5]{
\draw[color=black] (0,0) -- (0,1);
\draw[color=black] (1,0) -- (1,1);
\draw[color=black, ->] (0,1) -- (0,0.5);
\draw[color=black, ->] (1,0) -- (1,0.5);
} \end{array}
\qquad
\begin{array}{c}
\tikz[scale=0.5]{
\draw[color=black] (0,0) -- (0,2) -- (-1,3) -- (-1,5);
\draw[color=black] (1,0) -- (1,3.5);
\draw[color=black] (-1,1.5) -- (-1,2) -- (0,3) -- (0,3.5);
\draw[color=black] (-2,5) -- (-2,1.5);
\draw[color=black, ->] (0,0) -- (0,1);
\draw[color=black, ->] (-1,3.5) -- (-1,4.25);
\draw[color=black, ->] (-1,1.5) -- (-1,1.75);
\draw[color=black, ->] (0,3) -- (0,3.25);
\draw[color=black, ->] (-2,5) -- (-2,3.25);
\draw[color=black, ->] (1,3.5) -- (1,1.75);
\draw[color=black] (-2,6.5) -- (-2,6) -- (-1,5);
\draw[color=black] (-1,6.5) -- (-1,6) -- (-2,5);
\draw[color=black, ->] (-2,6) -- (-2,6.25);
\draw[color=black, ->] (-1,6.5) -- (-1,6.25);
\draw[color=black] (1,3.5) arc (0:180:0.5);
\draw[color=black] (-1,1.5) arc (360:180:0.5);
} \end{array}
=
\begin{array}{c}
\tikz[scale=0.5]{
\draw[color=black] (0,0) -- (0,1);
\draw[color=black] (1,0) -- (1,1);
\draw[color=black, ->] (0,0) -- (0,0.5);
\draw[color=black, ->] (1,1) -- (1,0.5);
} \end{array}
\end{gather*}

From this equational presentation of the linear~$(2,2)$-category~$\mathcal{AOB}$, we deduce a linear~$(3,2)$-polygraph presenting~$\mathcal{AOB}$.

\subsubsection{The linear~$(3,2)$-polygraph~$\mathrm{AOB}$} Let us define the linear~$(3,2)$-polygraph~$\mathrm{AOB}$. It is the $(3,2)$-polygraph with only one 0-cell and:
\begin{itemize}
\item $\mathrm{AOB}_1=\{ \wedge ,\vee \}$,
\item $\mathrm{AOB}_2=\big\{~
\begin{array}{c}
\tikz[scale=0.5]{
\draw[color=black, ->] (0,0) -- (0,0.25);
\draw[color=black] (0,0) -- (0,0.5);
\draw[color=black, ->] (1,0.5) -- (1,0.25);
\draw[color=black] (1,0) -- (1,0.5);
\draw[color=black] (1,0.5) arc (0:180:0.5);
} \end{array}
~, \qquad
\begin{array}{c}
\tikz[scale=0.5]{
\draw[color=black, ->] (0,0.5) -- (0,0.25);
\draw[color=black] (0,0) -- (0,0.5);
\draw[color=black, ->] (1,0) -- (1,0.25);
\draw[color=black] (1,0) -- (1,0.5);
\draw[color=black] (1,0) arc (360:180:0.5);
} \end{array}
~, \qquad
\begin{array}{c}
\tikz[scale=0.5]{
\draw[color=black] (0,0) -- (0,0.5) -- (1,1.5) -- (1,2);
\draw[color=black] (1,0) -- (1,0.5) -- (0,1.5) -- (0,2);
\draw[color=black, ->] (0,0) -- (0,0.25);
\draw[color=black, ->] (1,1.5) -- (1,1.75);
\draw[color=black, ->] (1,0) -- (1,0.25);
\draw[color=black, ->] (0,1.5) -- (0,1.75);
} \end{array}
~, \qquad
\begin{array}{c}
\tikz[scale=0.5]{
\draw[color=black] (0,0) -- (0,0.5) -- (1,1.5) -- (1,2);
\draw[color=black] (1,0) -- (1,0.5) -- (0,1.5) -- (0,2);
\draw[color=black, ->] (0,0.5) -- (0,0.25);
\draw[color=black, ->] (1,2) -- (1,1.75);
\draw[color=black, ->] (1,0) -- (1,0.25);
\draw[color=black, ->] (0,1.5) -- (0,1.75);
} \end{array}
~, \qquad
\begin{array}{c}
\tikz[scale=0.5]{
\draw[color=black, ->] (0,0) -- (0,0.5);
\draw[color=black, ->] (0,0.5) -- (0,1.5);
\draw[color=black] (0,1.5) -- (0,2);
\draw[thick] plot[mark=*] (0,1);
} \end{array}~ \big\}$,
\item $\mathrm{AOB}_3$ is made of the following 3-cells:
\begin{gather*}
\begin{array}{c}
\tikz[scale=0.5]{
\draw[color=black] (-1,0) -- (-1,1);
\draw[color=black] (1,1) -- (1,2);
\draw[color=black] (-1,1) arc (180:0:0.5);
\draw[color=black] (1,1) arc (360:180:0.5);
\draw[color=black, ->] (-1,0) -- (-1,0.5);
\draw[color=black, ->] (1,1) -- (1,1.5);
\draw[color=black, ->] (0,1.01) -- (0,1);
} \end{array}
\Rrightarrow
\begin{array}{c}
\tikz[scale=0.5]{
\draw[color=black] (0,0) -- (0,2);
\draw[color=black, ->] (0,0) -- (0,1);
} \end{array}
\qquad
\begin{array}{c}
\tikz[scale=0.5]{
\draw[color=black] (-1,0) -- (-1,-1);
\draw[color=black] (1,-1) -- (1,-2);
\draw[color=black] (-1,-1) arc (180:360:0.5);
\draw[color=black] (1,-1) arc (0:180:0.5);
\draw[color=black, ->] (-1,0) -- (-1,-0.5);
\draw[color=black, ->] (1,-1) -- (1,-1.5);
\draw[color=black, ->] (0,-1.01) -- (0,-1);
} \end{array}
\Rrightarrow
\begin{array}{c}
\tikz[scale=0.5]{
\draw[color=black] (0,0) -- (0,-2);
\draw[color=black, ->] (0,0) -- (0,-1);
} \end{array} \\
\begin{array}{c}
\tikz[scale=0.5]{
\draw[color=black, ->] (0,0) -- (0,0.25);
\draw[color=black, ->] (1,0) -- (1,0.25);
\draw[color=black, ->] (0,0.25) -- (0,0.5) -- (1,1.5) -- (1,1.75);
\draw[color=black, ->] (1,0.25) -- (1,0.5) -- (0,1.5) -- (0,1.75);
\draw[color=black, ->] (0,1.75) -- (0,2) -- (1,3) -- (1,3.25);
\draw[color=black, ->] (1,1.75) -- (1,2) -- (0,3) -- (0,3.25);
\draw[color=black] (1,3.5) -- (1,3.25);
\draw[color=black] (0,3.5) -- (0,3.25);
} \end{array}
\Rrightarrow
\begin{array}{c}
\tikz[scale=0.5]{
\draw[color=black, ->] (0,0) -- (0,1.75);
\draw[color=black] (0,1.75) -- (0,3.5);
\draw[color=black, ->] (1,0) -- (1,1.75);
\draw[color=black] (1,1.75) -- (1,3.5);
} \end{array}
\qquad
\begin{array}{c}
\tikz[scale=0.5]{
\draw[color=black] (-1,0) -- (-1,0.5) -- (0,1.5) -- (0,2) -- (1,3) -- (1,5);
\draw[color=black, ->] (-1,0) -- (-1,0.25);
\draw[color=black, ->] (0,1.5) -- (0,1.75);
\draw[color=black, ->] (1,3) -- (1,4);
\draw[color=black] (0,0) -- (0,0.5) -- (-1,1.5) -- (-1,3.5) -- (0,4.5) -- (0,5);
\draw[color=black, ->] (0,0) -- (0,0.25);
\draw[color=black, ->] (-1,1.5) -- (-1,2.5);
\draw[color=black, ->] (0,4.5) -- (0,4.75);
\draw[color=black] (1,0) -- (1,2) -- (0,3) -- (0,3.5) -- (-1,4.5) -- (-1,5);
\draw[color=black, ->] (1,0) -- (1,1);
\draw[color=black, ->] (0,3) -- (0,3.25);
\draw[color=black, ->] (-1,4.5) -- (-1,4.75);
} \end{array}
\Rrightarrow
\begin{array}{c}
\tikz[scale=0.5]{
\draw[color=black] (1,0) -- (1,0.5) -- (0,1.5) -- (0,2) -- (-1,3) -- (-1,5);
\draw[color=black, ->] (1,0) -- (1,0.25);
\draw[color=black, ->] (0,1.5) -- (0,1.75);
\draw[color=black, ->] (-1,3) -- (-1,4);
\draw[color=black] (0,0) -- (0,0.5) -- (1,1.5) -- (1,3.5) -- (0,4.5) -- (0,5);
\draw[color=black, ->] (0,0) -- (0,0.25);
\draw[color=black, ->] (1,1.5) -- (1,2.5);
\draw[color=black, ->] (0,4.5) -- (0,4.75);
\draw[color=black] (-1,0) -- (-1,2) -- (0,3) -- (0,3.5) -- (1,4.5) -- (1,5);
\draw[color=black, ->] (-1,0) -- (-1,1);
\draw[color=black, ->] (0,3) -- (0,3.25);
\draw[color=black, ->] (1,4.5) -- (1,4.75);
} \end{array}
\qquad
\begin{array}{c}
\tikz[scale=0.5]{
\draw[color=black] (0,0) -- (0,1) -- (1,2) -- (1,3);
\draw[color=black] (1,0) -- (1,1) -- (0,2) -- (0,3);
\draw[color=black, ->] (1,0) -- (1,0.25);
\draw[thick] plot[mark=*] (1,0.5);
\draw[color=black, ->] (0,0) -- (0,0.5);
\draw[color=black, ->] (1,0.5) -- (1,0.8);
\draw[color=black, ->] (0,2) -- (0,2.5);
\draw[color=black, ->] (1,2) -- (1,2.5);
} \end{array}
\Rrightarrow
\begin{array}{c}
\tikz[scale=0.5]{
\draw[color=black] (0,0) -- (0,1) -- (1,2) -- (1,3);
\draw[color=black] (1,0) -- (1,1) -- (0,2) -- (0,3);
\draw[color=black, ->] (0,2.5) -- (0,2.8);
\draw[thick] plot[mark=*] (0,2.5);
\draw[color=black, ->] (0,0) -- (0,0.5);
\draw[color=black, ->] (1,0) -- (1,0.5);
\draw[color=black, ->] (0,2) -- (0,2.25);
\draw[color=black, ->] (1,2) -- (1,2.5);
} \end{array}
+
\begin{array}{c}
\tikz[scale=0.5]{
\draw[color=black] (0,0) -- (0,3);
\draw[color=black] (1,0) -- (1,3);
\draw[color=black, ->] (0,0) -- (0,1.5);
\draw[color=black, ->] (1,0) -- (1,1.5);
} \end{array} \\
\begin{array}{c}
\tikz[scale=0.5]{
\draw[color=black] (0,0) -- (0,2) -- (-1,3) -- (-1,5);
\draw[color=black] (1,0) -- (1,3.5);
\draw[color=black] (-1,1.5) -- (-1,2) -- (0,3) -- (0,3.5);
\draw[color=black] (-2,5) -- (-2,1.5);
\draw[color=black, ->] (0,0) -- (0,1);
\draw[color=black, ->] (-1,3.5) -- (-1,4.25);
\draw[color=black, ->] (-1,1.5) -- (-1,1.75);
\draw[color=black, ->] (0,3) -- (0,3.25);
\draw[color=black, ->] (-2,5) -- (-2,3.25);
\draw[color=black, ->] (1,3.5) -- (1,1.75);
\draw[color=black] (0,-1.5) -- (0,-1) -- (1,0);
\draw[color=black] (1,-1.5) -- (1,-1) -- (0,0);
\draw[color=black, ->] (0,-1) -- (0,-1.25);
\draw[color=black, ->] (1,-1.5) -- (1,-1.25);
\draw[color=black] (1,3.5) arc (0:180:0.5);
\draw[color=black] (-1,1.5) arc (360:180:0.5);
} \end{array}
\Rrightarrow
\begin{array}{c}
\tikz[scale=0.5]{
\draw[color=black] (0,0) -- (0,1);
\draw[color=black] (1,0) -- (1,1);
\draw[color=black, ->] (0,1) -- (0,0.5);
\draw[color=black, ->] (1,0) -- (1,0.5);
} \end{array}
\qquad
\begin{array}{c}
\tikz[scale=0.5]{
\draw[color=black] (0,0) -- (0,2) -- (-1,3) -- (-1,5);
\draw[color=black] (1,0) -- (1,3.5);
\draw[color=black] (-1,1.5) -- (-1,2) -- (0,3) -- (0,3.5);
\draw[color=black] (-2,5) -- (-2,1.5);
\draw[color=black, ->] (0,0) -- (0,1);
\draw[color=black, ->] (-1,3.5) -- (-1,4.25);
\draw[color=black, ->] (-1,1.5) -- (-1,1.75);
\draw[color=black, ->] (0,3) -- (0,3.25);
\draw[color=black, ->] (-2,5) -- (-2,3.25);
\draw[color=black, ->] (1,3.5) -- (1,1.75);
\draw[color=black] (-2,6.5) -- (-2,6) -- (-1,5);
\draw[color=black] (-1,6.5) -- (-1,6) -- (-2,5);
\draw[color=black, ->] (-2,6) -- (-2,6.25);
\draw[color=black, ->] (-1,6.5) -- (-1,6.25);
\draw[color=black] (1,3.5) arc (0:180:0.5);
\draw[color=black] (-1,1.5) arc (360:180:0.5);
} \end{array}
\Rrightarrow
\begin{array}{c}
\tikz[scale=0.5]{
\draw[color=black] (0,0) -- (0,1);
\draw[color=black] (1,0) -- (1,1);
\draw[color=black, ->] (0,0) -- (0,0.5);
\draw[color=black, ->] (1,1) -- (1,0.5);
} \end{array}
\end{gather*}
\end{itemize}

This linear $(3,2)$-polygraph presents the linear~$(2,2)$-category $\mathcal{AOB}$. This presentation is not confluent. For example, the following  critical branching is not confluent:
\begin{gather*}
\begin{array}{c}
\tikz[scale=0.5]{
\draw[color=black] (0,0) -- (0,1);
\draw[color=black] (1,0) -- (1,1);
\draw[color=black, ->] (1,0) -- (1,0.25);
\draw[thick] plot[mark=*] (1,0.5);
\draw[color=black, ->] (0,0) -- (0,0.5);
\draw[color=black, ->] (1,0.5) -- (1,0.8);
} \end{array}
\Lleftarrow
\begin{array}{c}
\tikz[scale=0.5]{
\draw[color=black] (0,0) -- (0,1) -- (1,2) -- (1,2.5) -- (0,3.5) -- (0,4);
\draw[color=black] (1,0) -- (1,1) -- (0,2) -- (0,2.5) -- (1,3.5) -- (1,4);
\draw[color=black, ->] (0,0) -- (0,0.5);
\draw[color=black, ->] (0,2) -- (0,2.25);
\draw[color=black, ->] (0,3.5) -- (0,3.75);
\draw[color=black, ->] (1,0) -- (1,0.25);
\draw[color=black, ->] (1,0.5) -- (1,0.8);
\draw[color=black, ->] (1,2) -- (1,2.25);
\draw[color=black, ->] (1,3.5) -- (1,3.75);
\draw[thick] plot[mark=*] (1,0.5);
} \end{array}
\Rrightarrow
\begin{array}{c}
\tikz[scale=0.5]{
\draw[color=black] (0,0) -- (0,0.5) -- (1,1.5) -- (1,2.5) -- (0,3.5) -- (0,4);
\draw[color=black] (1,0) -- (1,0.5) -- (0,1.5) -- (0,2.5) -- (1,3.5) -- (1,4);
\draw[color=black, ->] (0,0) -- (0,0.25);
\draw[color=black, ->] (1,0) -- (1,0.25);
\draw[color=black, ->] (0,1.5) -- (0,1.75);
\draw[color=black, ->] (0,2) -- (0,2.3);
\draw[color=black, ->] (1,1.5) -- (1,2);
\draw[color=black, ->] (0,3.5) -- (0,3.75);
\draw[color=black, ->] (1,3.5) -- (1,3.75);
\draw[thick] plot[mark=*] (0,2);
} \end{array}
+
\begin{array}{c}
\tikz[scale=0.5]{
\draw[color=black] (0,0) -- (0,0.5) -- (1,1.5) -- (1,2);
\draw[color=black] (1,0) -- (1,0.5) -- (0,1.5) -- (0,2);
\draw[color=black, ->] (0,0) -- (0,0.25);
\draw[color=black, ->] (1,1.5) -- (1,1.75);
\draw[color=black, ->] (1,0) -- (1,0.25);
\draw[color=black, ->] (0,1.5) -- (0,1.75);
} \end{array}
\end{gather*}
We will give a confluent presentation of the linear $(2,2)$-category $\mathcal{AOB}$ in Theorem \ref{thconf}.

\subsection{A confluent presentation of $\mathcal{AOB}$}

We will give a linear~$(3,2)$-polygraph~$\overline{\mathrm{AOB}}$ presenting the linear~$(2,2)$-category~$\mathcal{AOB}$, with more 2-cells than the linear~$(3,2)$-polygraph~$\mathrm{AOB}$. We will prove that~$\overline{\mathrm{AOB}}$ is confluent. The presence of redundant 2-cells can be seen as an addition of redundant generators to give more relations and make our presentation confluent.

\subsubsection{The linear~$(3,2)$-polygraph~$\overline{\mathrm{AOB}}$} The linear~$(3,2)$-polygraph is defined by:

\begin{itemize}
\item $\overline{\mathrm{AOB}}$ has the same 0-cells and 1-cells than the linear~$(3,2)$-polygraph~$\mathrm{AOB}$,
\item $\overline{\mathrm{AOB}}_2=\mathrm{AOB}_2 \cup \big\{~

\end{gather*}

Let us now expand on the first chapter of \cite{Turaev}. This chapter defines first ribbon categories as braided monoidal categories with duals and twist. The twist is a natural transformation $\theta$ from the identity functor to itself satisfying:
$$\theta_{V \otimes W}=b_{W,V} \circ (\theta_V \otimes \theta_W) \circ b_{V,W}$$
for each objects $V$ and $W$, where $b$ denotes the braiding. A ribbon category satisfies the axiom $\theta^*=\theta$.

An example of ribbon category is the category of ribbon tangles on a set $S$ of colors. This strict monoidal category $\mathcal{RIB}_S$ is defined by:
\begin{itemize}
\item the objects of $\mathcal{RIB}_S$ are the words of the free monoid on $S \times \{\vee,\wedge\}$,
\item the morphisms of $\mathcal{RIB}_S$ from a word $u$ to a word $v$ are the oriented tangles of colored ribbons such that the word $u$ is on the upper boundary and the word $v$ is on the lower boundary,
\item two isotopic tangles are equal.
\end{itemize}

In this category, each object is its own dual. The twist corresponds to transversally twisting a ribbon by 360 degrees. Turaev then gives a presentation by generators and relations of $\mathcal{RIB}_S$. The generators are the cups, caps, crossings and twistings of each ribbon colors and directions. The relations are invariance by multiple moves Turaev describes as elementary isotopies and Reidemeister moves.

From the presentation of $\mathcal{RIB}_S$ by generators and relations, we can present the category $\mathcal{RIB}$ of ribbon tangles with only one color by generators and relations. $\mathcal{RIB}$ being a monoidal category, we can describe it as a 2-category with only one 0-cell. The linearization of $\mathcal{RIB}$ is thus a linear $(2,2)$-category with the same 0-cell and 1-cells than $\mathcal{RIB}$. We will call $\mathcal{RIB}_{\mathbb{K}}$ this linear $(2,2)$-category, where $\mathbb{K}$ is our fixed field. Let $\mathcal{OB}$ be the subcategory of $\mathcal{AOB}$ defined by:
\begin{itemize}
\item $\mathcal{OB}_0=\mathcal{AOB}_0$ and $\mathcal{OB}_1=\mathcal{AOB}_1$,
\item $\mathcal{OB}_2$ is made of the oriented Brauer diagrams with bubbles (without dots)
\end{itemize}
The linear 2-functor from $\mathcal{RIB}_{\mathbb{K}}$ to $\mathcal{OB}$ sending each ribbon to a string with the same direction is full as noted in \cite{BCNR}. We derive from this fact a description of elementary isotopies and Reidemeister moves first described in \cite{Turaev} for the linear $(2,2)$-category $\mathcal{AOB}$

\begin{proposition} The linear~$(3,2)$-polygraph~$\overline{\mathrm{AOB}}$ is a presentation of the linear~$(2,2)$-category~$\mathcal{AOB}$. \end{proposition}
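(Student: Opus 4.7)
The plan is to establish that $\overline{\mathrm{AOB}}$ and $\mathrm{AOB}$ present the same linear $(2,2)$-category, leveraging the fact that $\mathrm{AOB}$ is already known to present $\mathcal{AOB}$. The key observation is that every additional 2-cell in $\overline{\mathrm{AOB}}_2$ is expressible in terms of the generators of $\mathrm{AOB}_2$ using cups and caps, and every additional 3-cell in $\overline{\mathrm{AOB}}_3$ is a consequence of the 3-cells of $\mathrm{AOB}$ (and conversely).

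First I would define a morphism $\phi \colon \overline{\mathrm{AOB}}^{\ell} \to \mathrm{AOB}^{\ell}$ of linear $(2,2)$-categories. On 0-cells, 1-cells, and on the generators lying in $\mathrm{AOB}_2$, take $\phi$ to be the identity. On the extra 2-cells (the two reversed cups/caps, the downward-oriented dot, and the two reversed crossings), send each to its canonical composite expression via the original generators: for instance, a reversed crossing is sent to a composite of the ordinary crossing with two cups and two caps, while the downward dot is sent to the composite built from a cup, an upward dotted strand, and a cap. The fact that these composites yield well-defined 2-cells in $\mathrm{AOB}^{\ell}$ is immediate from the definition. Symmetrically, the inclusion of generators gives a map $\iota \colon \mathrm{AOB}^{\ell} \to \overline{\mathrm{AOB}}^{\ell}$.

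Second, I would verify that the congruence generated by $\overline{\mathrm{AOB}}_3$ contains the image under $\iota$ of the congruence generated by $\mathrm{AOB}_3$, and vice versa. In one direction, each of the three 3-cells of $\mathrm{AOB}_3$ appears (up to notational variant) among the isotopy, Reidemeister, and ordering 3-cells of $\overline{\mathrm{AOB}}_3$, so $\iota$ factors through the presented category. In the other direction, each family of 3-cells of $\overline{\mathrm{AOB}}_3$ must be shown to hold in $\mathcal{AOB}$: the isotopy 3-cells $i^0_k$ and $i^1_k$ follow from the zig-zag relations of $\mathrm{AOB}_3$ together with dot-sliding; the Reidemeister 3-cells $r^0_k$, $r^1_k$ through $r^5_k$ follow from the two Reidemeister relations in $\mathrm{AOB}_3$ and isotopy, exactly as in Turaev's treatment of ribbon tangles specialised to $\mathcal{AOB}$ via the full functor from $\mathcal{RIB}_{\mathbb{K}}$ onto the bubble-free subcategory $\mathcal{OB}$; the ordering 3-cells $o^0_k$ follow directly from the defining dot-crossing relation in $\mathrm{AOB}_3$, while the $o^1_k$ and sliding 3-cells are obtained by induction using the recurrences stated in the remark ($u_{n+1,m} = u_{n,m+1} - u_{n,0} \star_1 u_{0,m}$ and its analogues for $v$, $v'$).

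Finally I would conclude by checking that the composites $\iota \circ \phi$ and $\phi \circ \iota$ descend to the identity on the respective presented linear $(2,2)$-categories. On $\mathrm{AOB}^{\ell}/{\equiv}$ this is automatic since $\iota \circ \phi$ fixes each generator of $\mathrm{AOB}_2$; on $\overline{\mathrm{AOB}}^{\ell}/{\equiv}$ one uses the isotopy 3-cells $i^0_k$, $i^1_k$ to show that each extra generator of $\overline{\mathrm{AOB}}_2$ is congruent to its image under $\phi$. Since $\mathrm{AOB}$ presents $\mathcal{AOB}$, this yields the result. The main obstacle will be the bookkeeping verification that the infinite families of ordering 3-cells $o^1_k$ and sliding 3-cells $s^0_k$, $s^1_k$ are all derivable from the finite relations of $\mathrm{AOB}$; this requires setting up the inductive framework based on the recurrences for $u_{n,m}$, $v_{n,m}$, and $v'_{n,m}$, and checking that the polynomial coefficients appearing in the 3-cells match those produced by the induction.
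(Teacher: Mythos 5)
Your argument is correct in outline, but it takes a genuinely different route from the paper. The paper does not pass through the polygraph $\mathrm{AOB}$ at all: it checks soundness directly against the diagrammatic definition of $\mathcal{AOB}$ (every 3-cell of $\overline{\mathrm{AOB}}$ is a relation among dotted oriented Brauer diagrams), and it obtains completeness by observing that the 3-cells of types $i^0$, $r^0$, $r^1$, $r^2$, $r^4$, $r^5$ contain all the elementary isotopies and Reidemeister moves of Turaev's presentation of the ribbon category, hence generate the equivalence of diagrams. You instead run a Tietze-equivalence argument relative to the already-known presentation $\mathrm{AOB}$: expressing the redundant generators as composites of the original ones, checking that the two congruences match, and verifying that the induced functors are mutually inverse. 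Your route has the advantage of making explicit exactly which verifications are needed — in particular you correctly isolate the real work, namely deriving the infinite families of ordering and sliding 3-cells from the finite relations by induction on the recurrences for $u_{n,m}$, $v_{n,m}$, $v'_{n,m}$, a point the paper's proof glosses over entirely. The paper's route is shorter because Turaev's completeness theorem for $\mathcal{RIB}_{\mathbb{K}}$ (transported along the full functor onto $\mathcal{OB}$) disposes of all isotopy and Reidemeister relations at once, but it leaves the treatment of the dotted and bubble relations more implicit. One small correction to your write-up: $\mathrm{AOB}_3$ has seven 3-cells, not three, so the completeness direction of your congruence comparison must account for all of them (the two zig-zag relations, the two braid relations, the dot-crossing relation, and the two relations expressing that $t$ is invertible); all of them do appear, up to the isotopy 3-cells, among the relations of $\overline{\mathrm{AOB}}$, so this is only a bookkeeping issue.
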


\begin{proof} All 3-cells of~$\overline{\mathrm{AOB}}$ correspond to a relation verified in the linear~$(2,2)$-category~$\mathcal{AOB}$ by Definition \ref{defAOB}. Moreover, the set of 3-cells of type $i^0$, $r^0$, $r^1$, $r^2$, $r^4$, and $r^5$ contains all elementary isotopies and Reidemeister moves given in \cite{Turaev} (Chapter 1, Section 4), and thus generates the equivalence of dotted oriented Brauer diagram with bubbles. As a consequence, the 3-cells of~$\overline{\mathrm{AOB}}$ are sufficient to find any relation verified in~$\overline{\mathrm{AOB}}$. \end{proof}

Then, we define some particular 2-cells of $\overline{\mathrm{AOB}}_2^\ell$.

\begin{definition} We call a monomial of $\overline{\mathrm{AOB}}$ quasi-reduced if the only 3-cells we can apply to it are of the form:
\begin{gather*}
\begin{array}{c}
\tikz[scale=0.5]{
\draw[color=black] (0,0) -- (0,1);
\draw[color=black] (1.5,0) -- (1.5,1);
\draw[color=black] (0,1) arc (180:0:0.75);
\draw[color=black] (0,0) arc (180:360:0.75);
\draw[thick] plot[mark=*] (0,0.5);
\draw[color=black, ->] (0,0) -- (0,0.25);
\draw[color=black, ->] (0,0.5) -- (0,0.8);
\draw[color=black, ->] (1.5,1) -- (1.5,0.5);
\node at (0.5,0.6) {$i$};
} \end{array}
\Rrightarrow
\begin{array}{c}
\tikz[scale=0.5]{
\draw[color=black] (0,0) -- (0,1);
\draw[color=black] (2,0) -- (2,1);
\draw[color=black] (0,1) arc (180:0:1);
\draw[color=black] (0,0) arc (180:360:1);
\draw[thick] plot[mark=*] (0,0.5);
\draw[thick] plot[mark=*] (2,0.5);
\draw[color=black, ->] (0,0) -- (0,0.25);
\draw[color=black, ->] (0,0.5) -- (0,0.8);
\draw[color=black, ->] (2,1) -- (2,0.75);
\draw[color=black, ->] (2,0.5) -- (2,0.2);
\node at (0.9,0.56) {\tiny{$i-1$}};
} \end{array}\\
\begin{array}{c}
\tikz[scale=0.5]{
\draw[color=black] (0,0) -- (0,0.5);
\draw[color=black] (1,0) -- (1,0.5);
\draw[color=black] (1,0.5) arc (0:180:0.5);
\draw[color=black, ->] (0,0.5) -- (0,0.25);
\draw[color=black, ->] (1,0) -- (1,0.25);
\draw[color=black, ->] (0,2) arc (180:0:0.5);
\draw[color=black, ->] (1,2) arc (360:180:0.5);
} \end{array}
\Rrightarrow
\begin{array}{c}
\tikz[scale=0.5]{
\draw[color=black] (0,0) -- (0,1);
\draw[color=black] (2,0) -- (2,1);
\draw[color=black] (2,1) arc (0:180:1);
\draw[color=black, ->] (0,1) -- (0,0.5);
\draw[color=black, ->] (2,0) -- (2,0.5);
\draw[color=black, ->] (0.5,1) arc (180:0:0.5);
\draw[color=black, ->] (1.5,1) arc (360:180:0.5);
} \end{array}
\qquad
\begin{array}{c}
\tikz[scale=0.5]{
\draw[color=black] (0,0) -- (0,0.5);
\draw[color=black] (1,0) -- (1,0.5);
\draw[color=black] (1,0.5) arc (0:180:0.5);
\draw[color=black, ->] (0,0.5) -- (0,0.25);
\draw[color=black, ->] (1,0) -- (1,0.25);
\draw[color=black] (0,2.5) arc (180:0:0.5);
\draw[color=black, ->] (1,2) arc (360:180:0.5);
\draw[color=black, ->] (0,2) -- (0,2.5);
\draw[color=black] (1,2) -- (1,2.5);
\draw[color=black, ->] (1,2.5) -- (1,2.25);
\draw[thick] plot[mark=*] (0,2.25);
} \end{array}
\Rrightarrow
\begin{array}{c}
\tikz[scale=0.5]{
\draw[color=black] (0,0) -- (0,1.5);
\draw[color=black] (2,0) -- (2,1.5);
\draw[color=black] (2,1.5) arc (0:180:1);
\draw[color=black, ->] (0,1.5) -- (0,0.75);
\draw[color=black, ->] (2,0) -- (2,0.75);
\draw[color=black] (0.5,1.5) arc (180:0:0.5);
\draw[color=black, ->] (1.5,1) arc (360:180:0.5);
\draw[color=black, ->] (0.5,1) -- (0.5,1.5);
\draw[color=black] (1.5,1) -- (1.5,1.5);
\draw[color=black, ->] (1.5,1.5) -- (1.5,1.25);
\draw[thick] plot[mark=*] (0.5,1.25);
} \end{array}
+
\begin{array}{c}
\tikz[scale=0.5]{
\draw[color=black] (0,0) -- (0,1);
\draw[color=black] (2,0) -- (2,1);
\draw[color=black] (2,1) arc (0:180:1);
\draw[color=black, ->] (0,1) -- (0,0.5);
\draw[color=black, ->] (2,0) -- (2,0.5);
} \end{array}
\qquad
\begin{array}{c}
\tikz[scale=0.5]{
\draw[color=black] (0,0) -- (0,1);
\draw[color=black] (1.5,0) -- (1.5,1);
\draw[color=black] (0,1) arc (180:0:0.75);
\draw[color=black] (0,0) arc (180:360:0.75);
\draw[thick] plot[mark=*] (0,0.5);
\draw[color=black, ->] (0,0) -- (0,0.25);
\draw[color=black, ->] (0,0.5) -- (0,0.8);
\draw[color=black, ->] (1.5,1) -- (1.5,0.5);
\node at (0.5,0.6) {2};
\draw[color=black] (0,-2.5) -- (0,-2);
\draw[color=black] (1.5,-2.5) -- (1.5,-2);
\draw[color=black, ->] (0,-2) -- (0,-2.25);
\draw[color=black, ->] (1.5,-2.5) -- (1.5,-2.25);
\draw[color=black] (0,-2) arc (180:0:0.75);
} \end{array}
\Rrightarrow
\begin{array}{c}
\tikz[scale=0.5]{
\draw[color=black] (0,0) -- (0,1);
\draw[color=black] (1.5,0) -- (1.5,1);
\draw[color=black] (0,1) arc (180:0:0.75);
\draw[color=black] (0,0) arc (180:360:0.75);
\draw[thick] plot[mark=*] (0,0.5);
\draw[color=black, ->] (0,0) -- (0,0.25);
\draw[color=black, ->] (0,0.5) -- (0,0.8);
\draw[color=black, ->] (1.5,1) -- (1.5,0.5);
\node at (0.5,0.6) {2};
\draw[color=black] (-0.5,-1) -- (-0.5,1);
\draw[color=black] (2,-1) -- (2,1);
\draw[color=black, ->] (-0.5,1) -- (-0.5,0);
\draw[color=black, ->] (2,-1) -- (2,0);
\draw[color=black] (-0.5,1) arc (180:0:1.25);
} \end{array}
+2
\begin{array}{c}
\tikz[scale=0.5]{
\draw[color=black] (1,0) -- (1,1.5);
\draw[color=black] (0,0) -- (0,1.5);
\draw[color=black] (1,1.5) arc (0:180:0.5);
\draw[thick] plot[mark=*] (1,0.5);
\draw[thick] plot[mark=*] (1,1);
\draw[color=black, ->] (1,0) -- (1,0.25);
\draw[color=black, ->] (1,0.5) -- (1,0.8);
\draw[color=black, ->] (1,1) -- (1,1.3);
\draw[color=black, ->] (0,1.5) -- (0,0.75);
} \end{array}
+
\begin{array}{c}
\tikz[scale=0.5]{
\draw[color=black] (0,0) -- (0,1);
\draw[color=black] (2,0) -- (2,1);
\draw[color=black] (2,1) arc (0:180:1);
\draw[color=black, ->] (0,1) -- (0,0.5);
\draw[color=black, ->] (2,0) -- (2,0.5);
\draw[color=black, ->] (0.5,1) arc (180:0:0.5);
\draw[color=black, ->] (1.5,1) arc (360:180:0.5);
} \end{array}
\qquad \cdots  \\
\begin{array}{c}
\tikz[scale=0.5]{
\draw[color=black] (0,0) -- (0,0.5);
\draw[color=black] (1,0) -- (1,0.5);
\draw[color=black] (1,0.5) arc (0:180:0.5);
\draw[color=black, ->] (0,0) -- (0,0.25);
\draw[color=black, ->] (1,0.5) -- (1,0.25);
\draw[color=black, ->] (0,2) arc (180:0:0.5);
\draw[color=black, ->] (1,2) arc (360:180:0.5);
} \end{array}
\Rrightarrow
\begin{array}{c}
\tikz[scale=0.5]{
\draw[color=black] (0,0) -- (0,1);
\draw[color=black] (2,0) -- (2,1);
\draw[color=black] (2,1) arc (0:180:1);
\draw[color=black, ->] (0,0) -- (0,0.5);
\draw[color=black, ->] (2,1) -- (2,0.5);
\draw[color=black, ->] (0.5,1) arc (180:0:0.5);
\draw[color=black, ->] (1.5,1) arc (360:180:0.5);
} \end{array}
\qquad
\begin{array}{c}
\tikz[scale=0.5]{
\draw[color=black] (0,0) -- (0,0.5);
\draw[color=black] (1,0) -- (1,0.5);
\draw[color=black] (1,0.5) arc (0:180:0.5);
\draw[color=black, ->] (0,0) -- (0,0.25);
\draw[color=black, ->] (1,0.5) -- (1,0.25);
\draw[color=black] (0,2.5) arc (180:0:0.5);
\draw[color=black, ->] (1,2) arc (360:180:0.5);
\draw[color=black, ->] (0,2) -- (0,2.5);
\draw[color=black] (1,2) -- (1,2.5);
\draw[color=black, ->] (1,2.5) -- (1,2.25);
\draw[thick] plot[mark=*] (0,2.25);
} \end{array}
\Rrightarrow
\begin{array}{c}
\tikz[scale=0.5]{
\draw[color=black] (0,0) -- (0,1.5);
\draw[color=black] (2,0) -- (2,1.5);
\draw[color=black] (2,1.5) arc (0:180:1);
\draw[color=black, ->] (0,0) -- (0,0.75);
\draw[color=black, ->] (2,1.5) -- (2,0.75);
\draw[color=black] (0.5,1.5) arc (180:0:0.5);
\draw[color=black, ->] (1.5,1) arc (360:180:0.5);
\draw[color=black, ->] (0.5,1) -- (0.5,1.5);
\draw[color=black] (1.5,1) -- (1.5,1.5);
\draw[color=black, ->] (1.5,1.5) -- (1.5,1.25);
\draw[thick] plot[mark=*] (0.5,1.25);
} \end{array}
-
\begin{array}{c}
\tikz[scale=0.5]{
\draw[color=black] (0,0) -- (0,1);
\draw[color=black] (2,0) -- (2,1);
\draw[color=black] (2,1) arc (0:180:1);
\draw[color=black, ->] (0,0) -- (0,0.5);
\draw[color=black, ->] (2,1) -- (2,0.5);
} \end{array}
\qquad
\begin{array}{c}
\tikz[scale=0.5]{
\draw[color=black] (0,0) -- (0,1);
\draw[color=black] (1.5,0) -- (1.5,1);
\draw[color=black] (0,1) arc (180:0:0.75);
\draw[color=black] (0,0) arc (180:360:0.75);
\draw[thick] plot[mark=*] (0,0.5);
\draw[color=black, ->] (0,0) -- (0,0.25);
\draw[color=black, ->] (0,0.5) -- (0,0.8);
\draw[color=black, ->] (1.5,1) -- (1.5,0.5);
\node at (0.5,0.6) {2};
\draw[color=black] (0,-2.5) -- (0,-2);
\draw[color=black] (1.5,-2.5) -- (1.5,-2);
\draw[color=black, ->] (0,-2.5) -- (0,-2.25);
\draw[color=black, ->] (1.5,-2) -- (1.5,-2.25);
\draw[color=black] (0,-2) arc (180:0:0.75);
} \end{array}
\Rrightarrow
\begin{array}{c}
\tikz[scale=0.5]{
\draw[color=black] (0,0) -- (0,1);
\draw[color=black] (1.5,0) -- (1.5,1);
\draw[color=black] (0,1) arc (180:0:0.75);
\draw[color=black] (0,0) arc (180:360:0.75);
\draw[thick] plot[mark=*] (0,0.5);
\draw[color=black, ->] (0,0) -- (0,0.25);
\draw[color=black, ->] (0,0.5) -- (0,0.8);
\draw[color=black, ->] (1.5,1) -- (1.5,0.5);
\node at (0.5,0.6) {2};
\draw[color=black] (-0.5,-1) -- (-0.5,1);
\draw[color=black] (2,-1) -- (2,1);
\draw[color=black, ->] (-0.5,-1) -- (-0.5,0);
\draw[color=black, ->] (2,1) -- (2,0);
\draw[color=black] (-0.5,1) arc (180:0:1.25);
} \end{array}
+2
\begin{array}{c}
\tikz[scale=0.5]{
\draw[color=black] (1,0) -- (1,1.5);
\draw[color=black] (0,0) -- (0,1.5);
\draw[color=black] (1,1.5) arc (0:180:0.5);
\draw[thick] plot[mark=*] (1,0.5);
\draw[thick] plot[mark=*] (1,1);
\draw[color=black, ->] (1,0.5) -- (1,0.2);
\draw[color=black, ->] (1,1) -- (1,0.7);
\draw[color=black, ->] (1,1.5) -- (1,1.25);
\draw[color=black, ->] (0,0) -- (0,0.75);
} \end{array}
+
\begin{array}{c}
\tikz[scale=0.5]{
\draw[color=black] (0,0) -- (0,1);
\draw[color=black] (2,0) -- (2,1);
\draw[color=black] (2,1) arc (0:180:1);
\draw[color=black, ->] (0,0) -- (0,0.5);
\draw[color=black, ->] (2,1) -- (2,0.5);
\draw[color=black, ->] (0.5,1) arc (180:0:0.5);
\draw[color=black, ->] (1.5,1) arc (360:180:0.5);
} \end{array}
\qquad \cdots  \\
\begin{array}{c}
\tikz[scale=0.5]{
\draw[color=black] (0,4) -- (0,3.5);
\draw[color=black] (1,4) -- (1,3.5);
\draw[color=black] (1,3.5) arc (360:180:0.5);
\draw[color=black, ->] (0,3.5) -- (0,3.75);
\draw[color=black, ->] (1,4) -- (1,3.75);
\draw[color=black, ->] (0,2) arc (180:0:0.5);
\draw[color=black, ->] (1,2) arc (360:180:0.5);
} \end{array}
\Rrightarrow
\begin{array}{c}
\tikz[scale=0.5]{
\draw[color=black] (0,2) -- (0,1);
\draw[color=black] (2,2) -- (2,1);
\draw[color=black] (2,1) arc (360:180:1);
\draw[color=black, ->] (0,1) -- (0,1.5);
\draw[color=black, ->] (2,2) -- (2,1.5);
\draw[color=black, ->] (0.5,1) arc (180:0:0.5);
\draw[color=black, ->] (1.5,1) arc (360:180:0.5);
} \end{array}
\qquad
\begin{array}{c}
\tikz[scale=0.5]{
\draw[color=black] (0,4.5) -- (0,4);
\draw[color=black] (1,4.5) -- (1,4);
\draw[color=black] (1,4) arc (360:180:0.5);
\draw[color=black, ->] (0,4) -- (0,4.25);
\draw[color=black, ->] (1,4.5) -- (1,4.25);
\draw[color=black] (0,2.5) arc (180:0:0.5);
\draw[color=black, ->] (1,2) arc (360:180:0.5);
\draw[color=black, ->] (0,2) -- (0,2.5);
\draw[color=black] (1,2) -- (1,2.5);
\draw[color=black, ->] (1,2.5) -- (1,2.25);
\draw[thick] plot[mark=*] (0,2.25);
} \end{array}
\Rrightarrow
\begin{array}{c}
\tikz[scale=0.5]{
\draw[color=black] (0,2.5) -- (0,1);
\draw[color=black] (2,2.5) -- (2,1);
\draw[color=black] (2,1) arc (360:180:1);
\draw[color=black, ->] (0,1) -- (0,1.75);
\draw[color=black, ->] (2,2.5) -- (2,1.75);
\draw[color=black] (0.5,1.5) arc (180:0:0.5);
\draw[color=black, ->] (1.5,1) arc (360:180:0.5);
\draw[color=black, ->] (0.5,1) -- (0.5,1.5);
\draw[color=black] (1.5,1) -- (1.5,1.5);
\draw[color=black, ->] (1.5,1.5) -- (1.5,1.25);
\draw[thick] plot[mark=*] (0.5,1.25);
} \end{array}
-
\begin{array}{c}
\tikz[scale=0.5]{
\draw[color=black] (0,2) -- (0,1);
\draw[color=black] (2,2) -- (2,1);
\draw[color=black] (2,1) arc (360:180:1);
\draw[color=black, ->] (0,1) -- (0,1.5);
\draw[color=black, ->] (2,2) -- (2,1.5);
} \end{array}
\qquad
\begin{array}{c}
\tikz[scale=0.5]{
\draw[color=black] (0,0) -- (0,1);
\draw[color=black] (1.5,0) -- (1.5,1);
\draw[color=black] (0,1) arc (180:0:0.75);
\draw[color=black] (0,0) arc (180:360:0.75);
\draw[thick] plot[mark=*] (0,0.5);
\draw[color=black, ->] (0,0) -- (0,0.25);
\draw[color=black, ->] (0,0.5) -- (0,0.8);
\draw[color=black, ->] (1.5,1) -- (1.5,0.5);
\node at (0.5,0.6) {2};
\draw[color=black] (0,3.5) -- (0,3);
\draw[color=black] (1.5,3.5) -- (1.5,3);
\draw[color=black, ->] (0,3) -- (0,3.25);
\draw[color=black, ->] (1.5,3.5) -- (1.5,3.25);
\draw[color=black] (0,3) arc (180:360:0.75);
} \end{array}
\Rrightarrow
\begin{array}{c}
\tikz[scale=0.5]{
\draw[color=black] (0,0) -- (0,1);
\draw[color=black] (1.5,0) -- (1.5,1);
\draw[color=black] (0,1) arc (180:0:0.75);
\draw[color=black] (0,0) arc (180:360:0.75);
\draw[thick] plot[mark=*] (0,0.5);
\draw[color=black, ->] (0,0) -- (0,0.25);
\draw[color=black, ->] (0,0.5) -- (0,0.8);
\draw[color=black, ->] (1.5,1) -- (1.5,0.5);
\node at (0.5,0.6) {2};
\draw[color=black] (-0.5,2) -- (-0.5,0);
\draw[color=black] (2,2) -- (2,0);
\draw[color=black, ->] (-0.5,0) -- (-0.5,1);
\draw[color=black, ->] (2,2) -- (2,1);
\draw[color=black] (-0.5,0) arc (180:360:1.25);
} \end{array}
+2
\begin{array}{c}
\tikz[scale=0.5]{
\draw[color=black] (1,0) -- (1,-1.5);
\draw[color=black] (0,0) -- (0,-1.5);
\draw[color=black] (1,-1.5) arc (360:180:0.5);
\draw[thick] plot[mark=*] (1,-0.5);
\draw[thick] plot[mark=*] (1,-1);
\draw[color=black, ->] (1,0) -- (1,-0.25);
\draw[color=black, ->] (1,-0.5) -- (1,-0.8);
\draw[color=black, ->] (1,-1) -- (1,-1.3);
\draw[color=black, ->] (0,-1.5) -- (0,-0.75);
} \end{array}
+
\begin{array}{c}
\tikz[scale=0.5]{
\draw[color=black] (0,2) -- (0,1);
\draw[color=black] (2,2) -- (2,1);
\draw[color=black] (2,1) arc (360:180:1);
\draw[color=black, ->] (0,1) -- (0,1.5);
\draw[color=black, ->] (2,2) -- (2,1.5);
\draw[color=black, ->] (0.5,1) arc (180:0:0.5);
\draw[color=black, ->] (1.5,1) arc (360:180:0.5);
} \end{array}
\qquad \cdots  \\
\begin{array}{c}
\tikz[scale=0.5]{
\draw[color=black] (0,4) -- (0,3.5);
\draw[color=black] (1,4) -- (1,3.5);
\draw[color=black] (1,3.5) arc (360:180:0.5);
\draw[color=black, ->] (0,4) -- (0,3.75);
\draw[color=black, ->] (1,3.5) -- (1,3.75);
\draw[color=black, ->] (0,2) arc (180:0:0.5);
\draw[color=black, ->] (1,2) arc (360:180:0.5);
} \end{array}
\Rrightarrow
\begin{array}{c}
\tikz[scale=0.5]{
\draw[color=black] (0,2) -- (0,1);
\draw[color=black] (2,2) -- (2,1);
\draw[color=black] (2,1) arc (360:180:1);
\draw[color=black, ->] (0,2) -- (0,1.5);
\draw[color=black, ->] (2,1) -- (2,1.5);
\draw[color=black, ->] (0.5,1) arc (180:0:0.5);
\draw[color=black, ->] (1.5,1) arc (360:180:0.5);
} \end{array}
\qquad
\begin{array}{c}
\tikz[scale=0.5]{
\draw[color=black] (0,4.5) -- (0,4);
\draw[color=black] (1,4.5) -- (1,4);
\draw[color=black] (1,4) arc (360:180:0.5);
\draw[color=black, ->] (0,4.5) -- (0,4.25);
\draw[color=black, ->] (1,4) -- (1,4.25);
\draw[color=black] (0,2.5) arc (180:0:0.5);
\draw[color=black, ->] (1,2) arc (360:180:0.5);
\draw[color=black, ->] (0,2) -- (0,2.5);
\draw[color=black] (1,2) -- (1,2.5);
\draw[color=black, ->] (1,2.5) -- (1,2.25);
\draw[thick] plot[mark=*] (0,2.25);
} \end{array}
\Rrightarrow
\begin{array}{c}
\tikz[scale=0.5]{
\draw[color=black] (0,2.5) -- (0,1);
\draw[color=black] (2,2.5) -- (2,1);
\draw[color=black] (2,1) arc (360:180:1);
\draw[color=black, ->] (0,2.5) -- (0,1.75);
\draw[color=black, ->] (2,1) -- (2,1.75);
\draw[color=black] (0.5,1.5) arc (180:0:0.5);
\draw[color=black, ->] (1.5,1) arc (360:180:0.5);
\draw[color=black, ->] (0.5,1) -- (0.5,1.5);
\draw[color=black] (1.5,1) -- (1.5,1.5);
\draw[color=black, ->] (1.5,1.5) -- (1.5,1.25);
\draw[thick] plot[mark=*] (0.5,1.25);
} \end{array}
+
\begin{array}{c}
\tikz[scale=0.5]{
\draw[color=black] (0,2) -- (0,1);
\draw[color=black] (2,2) -- (2,1);
\draw[color=black] (2,1) arc (360:180:1);
\draw[color=black, ->] (0,2) -- (0,1.5);
\draw[color=black, ->] (2,1) -- (2,1.5);
} \end{array}
\qquad
\begin{array}{c}
\tikz[scale=0.5]{
\draw[color=black] (0,0) -- (0,1);
\draw[color=black] (1.5,0) -- (1.5,1);
\draw[color=black] (0,1) arc (180:0:0.75);
\draw[color=black] (0,0) arc (180:360:0.75);
\draw[thick] plot[mark=*] (0,0.5);
\draw[color=black, ->] (0,0) -- (0,0.25);
\draw[color=black, ->] (0,0.5) -- (0,0.8);
\draw[color=black, ->] (1.5,1) -- (1.5,0.5);
\node at (0.5,0.6) {2};
\draw[color=black] (0,3.5) -- (0,3);
\draw[color=black] (1.5,3.5) -- (1.5,3);
\draw[color=black, ->] (0,3.5) -- (0,3.25);
\draw[color=black, ->] (1.5,3) -- (1.5,3.25);
\draw[color=black] (0,3) arc (180:360:0.75);
} \end{array}
\Rrightarrow
\begin{array}{c}
\tikz[scale=0.5]{
\draw[color=black] (0,0) -- (0,1);
\draw[color=black] (1.5,0) -- (1.5,1);
\draw[color=black] (0,1) arc (180:0:0.75);
\draw[color=black] (0,0) arc (180:360:0.75);
\draw[thick] plot[mark=*] (0,0.5);
\draw[color=black, ->] (0,0) -- (0,0.25);
\draw[color=black, ->] (0,0.5) -- (0,0.8);
\draw[color=black, ->] (1.5,1) -- (1.5,0.5);
\node at (0.5,0.6) {2};
\draw[color=black] (-0.5,2) -- (-0.5,0);
\draw[color=black] (2,2) -- (2,0);
\draw[color=black, ->] (-0.5,2) -- (-0.5,1);
\draw[color=black, ->] (2,0) -- (2,1);
\draw[color=black] (-0.5,0) arc (180:360:1.25);
} \end{array}
+2
\begin{array}{c}
\tikz[scale=0.5]{
\draw[color=black] (1,0) -- (1,-1.5);
\draw[color=black] (0,0) -- (0,-1.5);
\draw[color=black] (1,-1.5) arc (360:180:0.5);
\draw[thick] plot[mark=*] (1,-0.5);
\draw[thick] plot[mark=*] (1,-1);
\draw[color=black, ->] (1,-0.5) -- (1,-0.2);
\draw[color=black, ->] (1,-1) -- (1,-0.7);
\draw[color=black, ->] (1,-1.5) -- (1,-1.25);
\draw[color=black, ->] (0,0) -- (0,-0.75);
} \end{array}
+
\begin{array}{c}
\tikz[scale=0.5]{
\draw[color=black] (0,2) -- (0,1);
\draw[color=black] (2,2) -- (2,1);
\draw[color=black] (2,1) arc (360:180:1);
\draw[color=black, ->] (0,2) -- (0,1.5);
\draw[color=black, ->] (2,1) -- (2,1.5);
\draw[color=black, ->] (0.5,1) arc (180:0:0.5);
\draw[color=black, ->] (1.5,1) arc (360:180:0.5);
} \end{array}
\qquad \cdots 
\end{gather*}
We call a 2-cell of $\overline{\mathrm{AOB}}_2^\ell$ quasi-reduced if all monomials in its monomial decomposition are quasi-reduced. 
\end{definition}

\begin{lemma}\label{part1} The linear~$(3,2)$-polygraph~$\overline{\mathrm{AOB}}$ is locally confluent. \end{lemma}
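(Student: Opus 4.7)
The plan is to apply Theorem~\ref{CP}: once the canonical rewrite order $\prec_{\overline{\mathrm{AOB}}}$ is shown to be well-founded, local confluence of $\overline{\mathrm{AOB}}$ is equivalent to confluence of its critical branchings. Since $\overline{\mathrm{AOB}}$ is not terminating, well-foundedness of $\prec_{\overline{\mathrm{AOB}}}$ cannot be obtained from termination and must be established directly. I would produce a weight function on monomials, taking values in a well-ordered set (for instance a lexicographically ordered tuple that counts, in order of priority, Reidemeister-reducible sub-configurations, then dots situated below a crossing, then the total number of strands), and verify that every rewriting step strictly decreases this weight on its leading term while the remaining terms are strictly smaller monomials. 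The additive clause in the definition of $\preccurlyeq_{\Sigma}$ then forbids any infinite $\prec$-descending chain.

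The body of the argument is an enumeration of critical branchings. I would organise the 3-cells of $\overline{\mathrm{AOB}}$ by family, as in Notation~\ref{notaCP}: the isotopy families $i^0, i^1$, the Reidemeister families $r^0,\ldots,r^5$, the ordering families $o^0, o^1$, and the sliding families $s^0, s^1$. For each ordered pair of families I would list all ways the source of one 3-cell can overlap the source of another, keeping only the minimal overlaps. Since the $o^1$, $s^0$, $s^1$ families are parametrised by the number of dots and are defined by uniform inductive formulas, the critical branchings involving them are themselves parametrised and can be handled once and for all by a single generic diagrammatic argument per parameter class.

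The easy block consists of the overlaps in which only isotopy and Reidemeister 3-cells appear. Because the 3-cells of families $i^0, r^0, r^1, r^2, r^4, r^5$ together generate all elementary planar isotopies and Reidemeister moves of oriented tangles, the corresponding critical branchings close by the same diagrammatic manipulations used to prove that $\mathcal{RIB}_\K$ is well defined. The overlaps $s^0/s^0$, $s^1/s^1$, $s^0/s^1$, and $s/i$ close by naturality of horizontal composition with a vertical strand, and the overlaps $i/r$ and $r/r$ close by Reidemeister calculus.

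The main obstacle, and the heart of the case analysis, is the set of critical branchings that mix the ordering family $o$ with the Reidemeister family $r$, and the branchings internal to $o^1$. The ordering 3-cells produce correction terms: in a branching $\Rrightarrow^{o}_{r}$ or $\Rrightarrow^{o^1}_{o^1}$ the two rewriting paths yield linear combinations of several monomials, and confluence requires that these two sums agree after further rewriting. To verify this I would, for each such critical branching, close the diagram on the side of the ordering 3-cell by pushing the correction bubbles or dotted strands through the remaining crossings using the inductive formulas that define $v_{n,m}$, $v'_{n,m}$, $w_n$, $w'_n$ and $u_{n,m}$, and on the other side by iterating the Reidemeister moves and the sliding 3-cells. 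The output is an explicit equality between two linear combinations of oriented Brauer diagrams with bubbles, and this equality is proved by induction on the number of dots. This computation, routine in each individual case but genuinely non-trivial in aggregate, is what converts confluence of critical branchings, hence by Theorem~\ref{CP} local confluence of $\overline{\mathrm{AOB}}$.
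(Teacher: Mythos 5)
Your overall strategy coincides with the paper's: invoke Theorem~\ref{CP} to reduce local confluence to confluence of critical branchings, enumerate the branchings family by family as in Notation~\ref{notaCP}, cut down the list by the up/down and axial symmetries inherited from the relations of $\mathcal{OB}$, and treat the infinite families ($o^1$, $s^0$, $s^1$) uniformly by induction on the number of dots carried by the bubbles. The paper does exactly this, and then exhibits the confluence diagrams case by case; your plan defers that verification but the architecture is the same.

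There is, however, one concrete problem in your treatment of the hypothesis of Theorem~\ref{CP}. You propose to establish well-foundedness of $\prec_{\overline{\mathrm{AOB}}}$ by constructing a weight function, valued in a well-ordered set, that \emph{every} rewriting step strictly decreases on its leading term. Such a function cannot exist: its existence would imply termination of $\overline{\mathrm{AOB}}$, and the polygraph is explicitly non-terminating (the sliding 3-cells $s^0_0$ and $s^1_0$ move a bubble across a strand in opposite directions, and the isotopy 3-cells $i^1_k$ likewise move a dot back and forth across a cap or cup, producing rewriting loops). What is actually needed is weaker: the \emph{strict} canonical rewrite order only compares $u$ and $v$ when $u$ reduces to $v$ but $v$ does not reduce back to $u$, so it suffices to exhibit a weight that is non-increasing under every step and such that all weight-preserving steps are reversible by further rewriting; the weight function $\tau$ introduced before Lemma~\ref{algo} (which assigns $0$ to dots and cups/caps pointing one way, and positive values to crossings and the other cups/caps) is of this kind. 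As stated, your well-foundedness argument proves too much and therefore fails; note that the paper itself asserts well-foundedness of $\prec_{\overline{\mathrm{AOB}}}$ without proof, so you correctly identified a point requiring justification, but the justification must be of the non-increasing-plus-reversibility type rather than strictly decreasing.
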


\begin{proof} We will prove all critical branchings of~$\overline{\mathrm{AOB}}$ are confluent, which will give local confluence of~$\overline{\mathrm{AOB}}$ by \ref{CP} because its strict canonical rewrite order is well-founded.

We enumerate first the sources of the critical branchings which do not involve dotted bubbles, starting with the overlapping of the isotopy 3-cells and continuing in the order in which the relations are given. The first given 3-cell of $\mathrm{AOB}$ is $i^0_1$. We search for all the cells having a source overlapping with the source of $i^1_0$. Those 3-cells are $i^0_3$, $i^1_2$, $r^2_1$ and a family of sliding 3-cells given in the end denoted by $s_*$. We continue our enumeration by searching for all the overlaps of the source of $i^0_2$ and so on. The final enumeration is:
$$(i^0_1,i^0_3),(i^0_1,i^1_2),(i^0_1,r^2_1),(i^0_1,r^2_3),(i^0_2,i^0_4),(i^0_2,i^1_3),(i^0_2,r^2_2),(i^0_2,r^2_4),(i^0_3,i^1_4),(i^0_3,r^2_5),$$
$$(i^0_3,r^2_7),(i^0_4,i^1_1),(i^0_4,r^2_6),(i^0_4,r^2_8),(i^1_1,r^2_2),(i^1_1,r^2_4),(i^1_3,r^2_6),(i^1_3,r^2_8),(r^0_1,r^0_1),(r^0_1,r^1_1),$$
$$(r^0_1,r^1_2),(r^0_1,r^1_4),(r^0_1,r^2_1),(r^0_1,r^3_1),(r^0_2,r^0_2),(r^0_2,r^1_2),(r^0_2,r^1_4),(r^0_2,r^1_8),(r^0_2,r^2_2),$$
$$(r^0_2,r^2_7),(r^0_2,r^3_3),(r^0_2,r^3_6),(r^0_2,r^4_2),(r^0_2,r^4_3),(r^0_2,o^0_6),(r^0_2,o^0_7),(r^0_3,r^0_3),(r^0_3,r^1_3),$$
$$(r^0_3,r^1_6),(r^0_3,r^1_7),(r^0_3,r^2_3),(r^0_3,r^2_6),(r^0_3,r^3_2),(r^0_3,r^3_7),(r^0_3,r^4_1),(r^0_3,r^4_4),(r^0_3,o^0_5),$$
$$(r^0_3,o^0_8),(r^0_4,r^0_4),(r^0_4,r^1_5),(r^0_4,r^1_6),(r^0_4,r^1_7),(r^0_4,r^2_4),(r^0_4,r^2_5),(r^0_4,r^3_4),(r^0_4,r^3_5),$$
$$(r^0_4,o^0_4),(r^0_4,o^0_7),(r^1_1,r^1_1),(r^1_1,o^0_1),(r^1_1,o^0_2),(r^1_2,r^1_4),(r^1_2,r^3_7),(r^1_2,o^0_5),(r^1_2,o^0_7),$$
$$(r^1_3,r^1_3),(r^1_3,r^3_2),(r^1_3,r^4_1),(r^1_3,o^0_5),(r^1_3,o^0_6),(r^1_3,o^0_7),(r^1_4,r^2_8),(r^1_4,r^2_8),(r^1_4,r^3_8),$$
$$(r^1_4,r^4_2),(r^1_4,o^0_1),(r^1_4,o^0_2),(r^1_4,o^0_6),(r^1_4,o^0_8),(r^1_5,r^1_5),(r^1_5,o^0_3),(r^1_5,o^0_4),(r^1_6,r^1_7),$$
$$(r^1_6,r^2_3),(r^1_6,r^2_5),(r^1_6,r^3_5),(r^1_6,r^4_1),(r^1_6,o^0_5),(r^1_6,o^0_7),(r^1_7,r^2_4),(r^1_7,r^2_6),(r^1_7,r^3_4),$$
$$(r^1_7,r^4_4),(r^1_7,o^0_4),(r^1_7,o^0_5),(r^1_7,o^0_6),(r^1_8,r^1_8),(r^1_8,r^3_3),(r^1_8,r^3_6),(r^1_8,r^4_2),(r^1_8,r^4_3),$$
$$(r^1_8,o^0_3),(r^1_8,o^0_5),(r^1_8,o^0_8),(r^2_1,r^2_8),(r^2_1,r^3_7),(r^2_1,r^3_8),(r^2_1,r^5_1),(r^2_1,o^0_1),(r^2_1,o^0_2),$$
$$(r^2_2,r^3_4),(r^2_2,r^3_8),(r^2_2,r^4_3),(r^2_2,o^0_5),(r^2_3,r^3_1),(r^2_3,r^3_6),(r^2_3,r^4_4),(r^2_3,o^0_6),(r^2_4,r^2_5),$$
$$(r^2_4,r^3_3),(r^2_4,r^3_5),(r^2_4,r^5_2),(r^2_4,o^0_4),(r^2_5,r^3_4),(r^2_5,r^3_6),(r^2_5,r^5_2),(r^2_5,o^0_3),(r^2_5,o^0_4),$$
$$(r^2_6,r^3_8),(r^2_6,r^4_1),(r^2_6,o^0_7),(r^2_7,r^3_3),(r^2_7,r^3_5),(r^2_7,r^4_2),(r^2_7,o^0_6),(r^2_7,o^0_8),(r^2_8,r^3_2),$$
$$(r^2_8,r^5_1),(r^2_8,o^0_2),(r^3_1,r^3_8),(r^3_1,o^0_1),(r^3_1,o^0_2),(r^3_2,r^4_4),(r^3_2,o^0_5),(r^3_2,o^0_6),(r^3_3,r^4_3),$$
$$(r^3_3,o^0_3),(r^3_3,o^0_5),(r^3_4,r^3_5),(r^3_4,o^0_5),(r^3_4,o^0_7),(r^3_5,o^0_3),(r^3_5,o^0_4),(r^3_6,r^4_2),(r^3_6,r^4_2),$$
$$(r^3_6,o^0_7),(r^3_6,o^0_8),(r^3_7,r^4_1),(r^3_7,o^0_1),(r^3_7,o^0_7),(r^3_8,o^0_5),(r^3_8,o^0_7),(r^4_1,r^4_4),(r^4_1,o^0_6),$$
$$(r^4_2,r^4_3),(r^4_2,o^0_5),(r^4_3,o^0_8),(r^4_4,o^0_7),(r^5_1,o^0_1),(r^5_2,o^0_3),$$
$$(i^0_1,s_*),(i^0_2,s_*),(i^0_3,s_*),(i^0_4,s_*),(r^0_1,s_*),(r^0_2,s_*),(r^0_3,s_*),(r^0_4,s_*),$$
$$(r^1_1,s_*),(r^1_2,s_*),(r^1_3,s_*),(r^1_4,s_*),(r^1_5,s_*),(r^1_6,s_*),(r^1_7,s_*),(r^1_8,s_*),$$
$$(r^2_1,s_*),(r^2_2,s_*),(r^2_3,s_*),(r^2_4,s_*),(r^2_5,s_*),(r^2_6,s_*),(r^2_7,s_*),(r^2_8,s_*),$$
$$(r^3_1,s_*),(r^3_2,s_*),(r^3_3,s_*),(r^3_4,s_*),(r^3_5,s_*),(r^3_6,s_*),(r^3_7,s_*),(r^3_8,s_*),$$
$$(r^4_1,s_*),(r^4_2,s_*),(r^4_3,s_*),(r^4_4,s_*),(r^5_1,s_*),(r^5_2,s_*),(o^1_*,s_*),(s_*,s_*).$$
By doing this, we remark we can eliminate multiple critical branchings, similar to others. First, we can ignore the directions of the up and down for each of the following interactions:
\begin{itemize}
\item all critical branchings involving $i^1_k$ and $i^1_l$ for some $1\leqslant k\leqslant 4$ and $1\leqslant l\leqslant 4$,
\item all critical branchings involving $i^1_k$ for some $1\leqslant k\leqslant 4$ and a Reidemeister 3-cell,
\item all critical branchings involving two Reidemeister 3-cells.
\end{itemize}
Indeed, the above 3-cells correspond to the relations verified by the category $\mathcal{OB}$ defined in \cite{BCNR}. Those relations do not depend on the up or down directions. Those relations are also invariant by axial symmetry. Those facts allow us to withdraw multiple critical pairs from our enumeration. In the same way, any critical branchings involving two isotopy 3-cells can be studied up to axial symmetry.

The interactions between an ordering 3-cell and a  Reidemester 3-cell are not invariant by symmetry. But, when a critical branching involving an ordering 3-cell and a Reidemeister 3-cell $r^k_l$ is confluent, all critical branchings involving an ordering 3-cell and a 3-cell of the form $r^k_*$ are confluent. This fact allows us to treat multiple cases of critical branching simultaneously.

We now verify each critical branching is confluent. For each critical branching we are treating, we will use notation \ref{notaCP}. Thus, we will give the source of each critical branching and a target attained once a confluence diagram is given for this critical pair.
\begin{gather*}


\begin{lemma}\label{algo} Each 2-cell of the linear~$(2,2)$-polygraph $\overline{\mathrm{AOB}}_2^\ell$ can be rewritten into a quasi-reduced 2-cell. \end{lemma}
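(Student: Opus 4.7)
The plan is to define a termination measure on the monomials of $\overline{\mathrm{AOB}}$ that strictly decreases under every ``reducing'' (non-excluded) rewriting step, and to conclude by Noetherian induction.

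First I would split $\overline{\mathrm{AOB}}_3$ into the \emph{excluded} 3-cells, namely those listed in the definition of quasi-reducedness (which only perform the final normal-ordering of dotted and undotted bubbles), and the \emph{reducing} 3-cells, i.e., all the others: the isotopies $i^0_k$, $i^1_k$, the Reidemeister cells $r^j_k$, the ordering cells $o^0_k$ and the sliding cells $s^0_k$, $s^1_k$ acting on non-bubble configurations. Because a monomial is quasi-reduced exactly when no reducing 3-cell applies to it, it suffices to prove that the sub-polygraph $\Sigma^{\mathrm{red}}$ obtained by keeping only the reducing 3-cells is normalising on every 2-cell.

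Next I would construct a measure $\mu$ valued in a well-founded lexicographic product, whose components, in decreasing order of priority, are: the number of cup--cap pairs that do not form a pure bubble; the number of crossings; a braiding weight summing over crossings the horizontal position (from the left) of each crossing, which decreases under the $r^1_k$; a bubble-position weight summing over bubbles the horizontal distance from each bubble to the leftmost region, which decreases under the principal term of every $s^j_k$; and a dot-height weight summing over dots the directed distance from each dot to the endpoint of its strand prescribed by the $o^0_k$. A direct case-by-case inspection of the reducing 3-cells should then confirm that each one either strictly decreases one of the first two components (so its additive residues, which have strictly fewer crossings or fewer cup--cap pairs, are also strictly smaller in $\mu$), or preserves the first two components and strictly decreases one of the remaining components on its principal term, the residues again being smaller for the first two components.

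Extending $\mu$ to an arbitrary 2-cell by taking the multiset of the measures of its monomials, ordered by the standard well-founded multiset extension, one obtains a measure on $\overline{\mathrm{AOB}}_2^\ell$ that strictly decreases under every rewriting step of $\Sigma^{\mathrm{red}}$. The Noetherian induction principle proved above then yields, for every 2-cell $f \in \overline{\mathrm{AOB}}_2^\ell$, a finite rewriting sequence in $\Sigma^{\mathrm{red}}$ terminating at a 2-cell on which no reducing 3-cell applies, that is, at a quasi-reduced 2-cell. The main obstacle I expect is arranging the priorities of $\mu$ so that the additive residues produced by the eight ordering cells $o^0_k$ and the families $s^0_*$, $s^1_*$ are genuinely of smaller $\mu$-value than the source monomial (rather than merely smaller than the principal term); this is the point in the verification where the choice of the first two components as ``crossings plus non-bubble cup--cap pairs'' is essential, since every such residue is obtained by resolving a crossing or a cup--cap pair.
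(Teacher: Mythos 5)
Your overall strategy---show that the sub-polygraph of non-excluded (``reducing'') $3$-cells is terminating by exhibiting a well-founded measure that every reducing step strictly decreases, extend to linear combinations by a multiset order, and conclude by Noetherian induction---is essentially the paper's strategy in a different packaging. The paper introduces a weight function $\tau$ (value $1$ on cups and caps, $3$ on crossings, $0$ on dots) and runs a two-phase algorithm: first greedily decrease $\tau$, then rewrite while forbidding exactly the bubble-normalising $3$-cells of the quasi-reduced definition, asserting termination of the second phase in a single sentence. Your components (1)--(2) play the role of $\tau$, and your components (3)--(5) are an attempt to make the second phase's termination explicit, which is more informative than what the paper writes. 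Two details in your measure need repair before the deferred case-by-case inspection can succeed. First, your closing justification for the sliding residues is not correct as stated: the additive residues of $s^0_*$ and $s^1_*$ are obtained by deleting or displacing a \emph{pure bubble}, which you have deliberately excluded from component (1), so these residues are \emph{not} smaller in your first two components; they are equal there and must instead be caught by the bubble-position component (4) --- which does work, since a bubble being slid past a strand has distance at least one from the leftmost region, so deleting it or moving it one step left strictly decreases (4) while (1)--(3) are unchanged. Second, with the orientations chosen in $\overline{\mathrm{AOB}}$ the cells $r^1_k$ and $r^2_k$ move crossings toward the \emph{right} (for $r^1_1$ the sum of horizontal crossing positions increases from source to target), so your braiding weight must be measured from the right-hand side, and you must check that a single such convention works simultaneously for all the crossing-count-preserving families, including the $r^2_*$ cells in which a cap and a crossing exchange positions. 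Finally, component (5) should be defined so that it is non-increasing under rewrites performed elsewhere on the same strand (an $i^0_k$ or $r^0_k$ occurring between a dot and its target segment changes the count of generators the dot still has to cross, but only downward), and strictly decreasing under $i^1_k$ as well as $o^0_k$, since the $i^1_k$ are reducing cells that move only a dot. None of these points is fatal, and once they are fixed your argument establishes termination of the reducing sub-polygraph, which is stronger than the normalisation statement the paper actually proves.
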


Before proving this claim, we introduce the notion of weight function on a linear~$(2,2)$-category.

\begin{definition} Let $\mathcal{C}$ be a linear~$(2,2)$-category. A weight function on~$\mathcal{C}$ is a function~$\tau$ from $\mathcal{C}_2$ to~$\mathbb{N}$ such that:
\begin{itemize}
\item for each 0-composable monomials $a$ and~$b$ we have~$\tau(a \star_0 b)=\tau(a)+\tau(b)$,
\item for each 1-composable monomials $a$ and~$b$ we have~$\tau(a \star_1 b)=\tau(a)+\tau(b)$,
\item for each 2-cell~$x$, we have~$\tau(x)=max\{\tau(a)|\text{$a$ appears in the monomial decomposition of x}\}$.
\end{itemize}
\end{definition}

\begin{proof} To prove that each 2-cell of $\overline{\mathrm{AOB}}_2^\ell$ rewrites into a normally dotted oriented Brauer diagram, we give first a weight function~$\tau$ on~$\overline{\mathrm{AOB}}_2^\ell$ defined by:
\begin{gather*}
\tau

\qquad \cdots 
\end{gather*}
do:
\begin{itemize}
\item assign $m$ to~$m'$
\end{itemize}
\end{itemize}
This algorithm terminates because $\Sigma$ is terminating without the sliding 3-cells and the isotopy 3-cells that do not decrease the weight. This results in rewriting any 2-cell into a linear combination of normally ordered Brauer diagrams. For each  equivalence class of dotted oriented Brauer diagram with bubbles, exactly one representative of this class can appear in a 2-cell attained by the given algorithm because there is no isotopy or Reidemeister 3-cell to apply anymore. Those are the quasi-reduced monomials. \end{proof}

\begin{lemma}\label{part2} The linear~$(3,2)$-polygraph~$\overline{\mathrm{AOB}}$ is decreasing. \end{lemma}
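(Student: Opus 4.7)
The plan is to model the argument on Example \ref{exdecr}, partitioning the rewriting steps of $\overline{\mathrm{AOB}}$ by their distance to a quasi-reduced 2-cell. Concretely, for each rewriting step $\alpha$, define $w(\alpha) \in \mathbb{N}$ to be the minimum length of a rewriting sequence from $t_2(\alpha)$ to a quasi-reduced 2-cell; this is finite by Lemma \ref{algo}. Take the partition $\mathcal{P} = \{P_n : n \in \mathbb{N}\}$ with $P_n = \{\alpha : w(\alpha) = n\}$, ordered by $P_n \prec P_m$ iff $n < m$. This order is well-founded because $\mathbb{N}$ is.

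Next I would verify the decreasingness condition of Definition \ref{decr} on local branchings. Given a local branching $(i: u \Rrightarrow v, j: u \Rrightarrow w)$ with $i \in I = P_n$ and $j \in J = P_m$, apply the algorithm of Lemma \ref{algo} to produce a shortest rewriting sequence $a_I$ from $v$ to a quasi-reduced form $\hat{v}$, of length exactly $n$, and similarly $a_J$ from $w$ to $\hat{w}$ of length $m$. By optimality of shortest paths, every step appearing in $a_I$ has weight $< n$, so $a_I$ consists of steps in classes $\prec I$; similarly for $a_J$. The closure of $\hat{v}$ and $\hat{w}$ to a common target is then obtained by invoking local confluence (Lemma \ref{part1}) together with a further application of Lemma \ref{algo}, yielding connecting sequences $c, c'$.

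For aspherical and additive branchings the diagram is trivially of the shape required by Definition \ref{decr} (all of $a_I, a_J, c, c'$ empty and $b_I, b_J$ copies of the original steps). Peiffer branchings close directly by the exchange relation, with $b_I = i$ and $b_J = j$ and no auxiliary sequences. For overlapping branchings, one must revisit the case analysis of Lemma \ref{part1} and verify, for each closing diagram exhibited there, that the steps appearing on the ``side'' paths have weight strictly less than $n$ or $m$ as required, while the two ``middle'' steps remain in $I$ or $J$ or become trivial.

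The main obstacle is this last verification for overlapping branchings involving the infinite families of sliding and ordering 3-cells, whose targets introduce new bubble patterns. The key point will be that such 3-cells applied to a bubble with $k$ dots produce either a bubble with $k-1$ dots together with strictly simpler residual 2-cells, or entirely dot-free configurations; this inductive descent on dot count, already implicit in the recursive formulas for $u_{n,m}$, $v_{n,m}$, $v'_{n,m}$ discussed after the definition of $\overline{\mathrm{AOB}}$, should bound the weights of the closing steps. Combined with the case-by-case analysis of Lemma \ref{part1}, this yields the decreasing diagrams required and concludes that $\overline{\mathrm{AOB}}$ is decreasing.
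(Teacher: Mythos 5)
Your choice of partition and order is exactly the paper's: $\alpha\in P_n$ iff the shortest rewriting sequence from $t_2(\alpha)$ to a quasi-reduced 2-cell has length $n$ (well defined by Lemma \ref{algo}), with $P_n\prec P_m$ iff $n<m$. Your observation that every step of a shortest normalizing sequence starting at $t_2(i)$ has weight strictly less than $w(i)$ is also correct, since the tail of a shortest path witnesses a smaller weight. The problem is with the mechanism you propose for actually producing the decreasing diagrams.

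The central gap is your closure step. After normalizing $v$ and $w$ to quasi-reduced forms $\hat v$ and $\hat w$, you invoke ``local confluence (Lemma \ref{part1}) together with a further application of Lemma \ref{algo}'' to join them. But $\hat v$ and $\hat w$ do not form a local branching — they sit at the ends of two multi-step paths out of $u$ — so local confluence says nothing about them; joining them would require full confluence, which is precisely what decreasingness is being used to establish (and Newman's lemma is unavailable since $\overline{\mathrm{AOB}}$ is not terminating). Even granting a join, you give no bound forcing the connecting steps $c,c'$ to lie in classes $\prec I$ or $\prec J$, and your recipe collapses $b_I,b_J$ to identities, which fails already in the boundary case $I=J=P_0$: there $a_I,a_J,c,c'$ must all be empty, and the diagram can only close through genuine steps $b_I\in I$, $b_J\in J$, a shape your normalize-then-connect strategy never produces. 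The remaining content — the case-by-case exhibition of a weighted confluence diagram for each critical branching, checking the axioms of Definition \ref{decr} directly (including the infinite sliding/ordering families, handled by induction on the number of dots) — is exactly what the paper supplies in its appendix, and is what your proposal defers rather than proves. The dot-count descent you sketch is the right heuristic for the infinite families, but it does not substitute for exhibiting the diagrams.
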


\begin{proof} We define the set~$\mathcal{P}=\{P_n|n \in \mathbb{N} \}$ where for each rewriting step~$\alpha$ of~$\overline{\mathrm{AOB}}$ and each $n$ in $\mathbb{N}$, we have $\alpha \in P_n$ if and only if the rewriting sequence of minimal length from $t_2(\alpha)$ to a quasi-reduced 2-cell is of length~$n$. Because of the fact that every 2-cell can be rewritten into a quasi-reduced one \ref{algo},~$\mathcal{P}$ is a partition of the set of rewriting steps of~$\overline{\mathrm{AOB}}$. We give on this partition the order~$\prec$ such that~$P_n \prec P_m$ if and only if~$n<m$. This well-founded order respects all the properties of \ref{decr} for the critical branchings of $\overline{\mathrm{AOB}}$. The decreasingness critical branchings is explicited in the appendix. Then, the order~$\prec$ meets the properties of \ref{decr}, ending the proof by \ref{VO}. \end{proof}

\begin{theorem}\label{thconf} The linear~$(3,2)$-polygraph~$\overline{\mathrm{AOB}}$ is a confluent presentation of $\mathcal{AOB}$. \end{theorem}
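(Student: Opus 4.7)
The plan is to assemble the theorem from the components already laid out in the section. The presentation aspect is handled by the preceding proposition, which shows that $\overline{\mathrm{AOB}}$ is a presentation of $\mathcal{AOB}$ by verifying that all $3$-cells correspond to relations valid in $\mathcal{AOB}$ and that the isotopy and Reidemeister $3$-cells already generate the equivalence of dotted oriented Brauer diagrams with bubbles (via Turaev's list of elementary isotopies and Reidemeister moves). What remains is therefore confluence.

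For confluence I would invoke Lemma \ref{part2}: the polygraph $\overline{\mathrm{AOB}}$ is decreasing for the partition $\mathcal{P} = \{P_n \mid n \in \mathbb{N}\}$ indexed by the minimal distance from $t_2(\alpha)$ to a quasi-reduced $2$-cell. Theorem \ref{VO} then yields confluence directly. The only subtlety worth flagging is that Theorem \ref{VO} is stated for left-monomial linear $(3,2)$-polygraphs, so one should observe that $\overline{\mathrm{AOB}}$ is left-monomial: each generating $3$-cell has a monomial as its source (a single diagram, possibly with extra strands threaded through it), and this is preserved under the extension by whiskering and summands since we allow $u$ to be an arbitrary $2$-cell avoiding the monomial in question.

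Because this final packaging is immediate, the real content lies upstream, and I would structure the proof to make this explicit: first cite the presentation proposition, then cite Lemma \ref{part2} for decreasingness, and conclude by Theorem \ref{VO}. The hard part is not here but in the two preceding lemmas, namely the enumeration and confluence check of all critical branchings (Lemma \ref{part1}) and the construction of the partition $\mathcal{P}$ together with the verification of the five decreasingness conditions for each critical local branching (Lemma \ref{part2}); the latter in particular requires showing that the residues introduced when applying ordering and sliding $3$-cells are always closer to a quasi-reduced form than the source, which is why the rewrite strategy described in Lemma \ref{algo} is essential. Once those lemmas are in place, the theorem itself is a one-line consequence: $\overline{\mathrm{AOB}}$ presents $\mathcal{AOB}$ and is decreasing, hence confluent.
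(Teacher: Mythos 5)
Your proposal is correct and follows essentially the same route as the paper: the presentation claim is delegated to the preceding proposition, and confluence is obtained by combining Lemma \ref{part2} (decreasingness) with Theorem \ref{VO}, exactly as the paper does (the paper additionally cites Lemma \ref{part1} for local confluence, but as you note the decreasingness argument is what actually delivers confluence). Your remark that the left-monomial hypothesis of Theorem \ref{VO} must be checked for $\overline{\mathrm{AOB}}$ is a worthwhile addition that the paper leaves implicit.
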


\begin{proof} The linear~$(3,2)$-polygraph~$\overline{\mathrm{AOB}}$ is locally confluent by lemma \ref{part1}. Lemma \ref{part2} allows us to use \ref{VO} and conclude the proof. \end{proof}

This theorem implies the following result from \cite{BCNR}:
\begin{corollary}\label{Bru} Let $a$ and~$b$ be two 1-cells of the linear~$(2,2)$-category~$\mathcal{AOB}$. Then, the vector space $\mathcal{AOB}_2(a,b)$ has basis given by equivalence classes of normally ordered dotted oriented Brauer diagrams with bubbles with source $a$ and target $b$. \end{corollary}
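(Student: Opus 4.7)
The plan is to deduce the statement from Proposition \ref{basis} applied to the linear $(3,2)$-polygraph $\overline{\mathrm{AOB}}$, which by Theorem \ref{thconf} presents $\mathcal{AOB}$ and is confluent. Three hypotheses of Proposition \ref{basis} must be checked: confluence (supplied by Theorem \ref{thconf}), left-monomiality, and the normalizing property. Left-monomiality is immediate from the definition of $\overline{\mathrm{AOB}}_3$: each of the isotopy, Reidemeister, ordering, and sliding 3-cells has a single monomial (a dotted oriented Brauer diagram with bubbles) as its 2-source, not a linear combination.

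To establish the normalizing property, I would start from Lemma \ref{algo}, which guarantees that every 2-cell of $\overline{\mathrm{AOB}}_2^{\ell}$ can be rewritten into a quasi-reduced 2-cell. It then remains to argue that a quasi-reduced monomial can itself be driven to a monomial on which no 3-cell of $\overline{\mathrm{AOB}}$ applies at all. The residual rewrites still acting on quasi-reduced monomials either push a dot from a counterclockwise bubble outward (strictly lowering the dot count on the innermost bubble), slide a bubble closer to the leftmost region, or trade a counterclockwise bubble for clockwise ones with strictly smaller total dot counts plus residues to which the same argument applies. Each strictly decreases a natural numerical invariant on monomials, so iterating terminates. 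The resulting irreducible monomials are exactly the dotted oriented Brauer diagrams satisfying the three conditions of Subsection~\ref{equivdiag}, namely the normally ordered ones.

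With confluence, left-monomiality and normalization in hand, Proposition \ref{basis} yields that for any 1-cells $a$ and $b$ of $\mathcal{AOB}$, the set of normal-form monomials with 1-source $a$ and 1-target $b$ forms a basis of $\mathcal{AOB}_2(a,b)$. It remains to identify this set with the equivalence classes of normally ordered dotted oriented Brauer diagrams from $a$ to $b$. The isotopy 3-cells $i^0_*$, $i^1_*$ and the Reidemeister 3-cells $r^*_*$ of $\overline{\mathrm{AOB}}$ together generate precisely the equivalence relation on diagrams recalled in Subsection~\ref{equivdiag}, and in each equivalence class there is a unique normally ordered representative by definition. Hence the bijection between normal-form monomials and equivalence classes of normally ordered diagrams is immediate, and the corollary follows.

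The main obstacle I expect is the normalization step: Lemma \ref{algo} only delivers quasi-reduced 2-cells, and isolating the correct termination invariant (combining dot count, bubble orientation, and bubble position) that strictly decreases under the remaining allowed rewrites without creating a circular dependency with the sliding 3-cells themselves is the delicate part. Everything else is a matter of packaging the theorems already established, in particular the confluence supplied by Theorem \ref{thconf}.
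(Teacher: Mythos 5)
The central gap is your normalization step. The polygraph $\overline{\mathrm{AOB}}$ is not normalizing in the sense required by Proposition \ref{basis}: a quasi-reduced monomial is never a normal form, because the sliding 3-cells and the weight-preserving isotopy 3-cells still apply to it, and these residual moves do not strictly decrease any invariant --- they occur in mutually inverse pairs. For instance $i^1_1$ and $i^1_2$ transport a dot from a rightward cap to a leftward cap and back, so composing them returns to the starting monomial; the paper's own proof of the corollary exhibits exactly such two-step cycles for the remaining isotopy, sliding and ordering rewrites on quasi-reduced monomials. Consequently there is no terminating sequence of ``residual rewrites'' ending in an irreducible monomial, the numerical invariant you postulate cannot exist, and Proposition \ref{basis} cannot be invoked as stated. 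This is also why the corollary is phrased in terms of \emph{equivalence classes} of normally ordered diagrams rather than individual normal forms, and why your claim that each equivalence class contains a unique normally ordered representative ``by definition'' is not right: for example, permuting the vertical order of the bubbles in the leftmost region yields distinct normally ordered diagrams in the same class.

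The paper instead replaces the normalizing hypothesis by a weaker, tailored pair of conditions: (i) every 2-cell rewrites into a linear combination of normally ordered diagrams --- this is Lemma \ref{algo}, which you use correctly --- and (ii) a quasi-reduced 2-cell does not rewrite into a linear combination of non-equivalent quasi-reduced monomials, proved by listing the only rewriting paths still available on quasi-reduced monomials and observing that each one cycles back within its equivalence class. Together with confluence (Theorem \ref{thconf}) and left-monomiality, these give both that the equivalence classes of normally ordered diagrams generate $\mathcal{AOB}_2(a,b)$ and that they are linearly independent, via the kernel description of Lemma \ref{ker}. To salvage your plan you must either prove this weakened independence statement directly, or restate Proposition \ref{basis} with ``normal form'' replaced by ``equivalence class of quasi-reduced monomials''; as written, the appeal to normalization fails.
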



\begin{proof} We need to give a confluent left-monomial linear~$(3,2)$-polygraph presenting~the linear~$(2,2)$-category $\mathcal{AOB}$ such that:
\begin{itemize}
\item There is a bijection between the 2-cells of the free linear~$(2,2)$-category and dotted oriented Brauer diagrams with bubbles.
\item A normally ordered dotted oriented Brauer diagram does not rewrite into a linear combination of non-equivalent others.
\item Each 2-cell that is not in normal form rewrites into a linear combination of normally ordered dotted oriented Brauer diagrams.
\end{itemize}

The linear~$(3,2)$-polygraph~$\overline{\mathrm{AOB}}$ from \ref{thconf} verifies those properties. To prove that a normally ordered Brauer diagram does not rewrite into a linear combination of non-equivalent others, it is sufficient to show that a quasi-reduced 2-cell does not rewrite into another quasi-reduced 2-cell. The only way to rewrite a quasi-reduced monomial is to apply sliding 3-cells or isototpy relations that do not decrease the weight. So, the only way to rewrite a quasi-reduced monomial into a linear combination of others is to use rewriting paths of the forms:

\begin{gather*}
\begin{array}{c}
\tikz[scale=0.5]{
\draw[color=black] (0,0) -- (0,1);
\draw[color=black] (1,0) -- (1,1);
\draw[color=black] (0,1) arc (180:0:0.5);
\draw[color=black] (0,0) arc (180:360:0.5);
\draw[color=black, ->] (0,0) -- (0,0.25);
\draw[color=black, ->] (0,0.5) -- (0,0.75);
\draw[color=black, ->] (1,1) -- (1,0.5);
\draw[thick] plot[mark=*] (0,0.5);
} \end{array}
\Rrightarrow
\begin{array}{c}
\tikz[scale=0.5]{
\draw[color=black] (0,0) -- (0,1);
\draw[color=black] (1,0) -- (1,1);
\draw[color=black] (0,1) arc (180:0:0.5);
\draw[color=black] (0,0) arc (180:360:0.5);
\draw[color=black, ->] (0,0) -- (0,0.5);
\draw[color=black, ->] (1,0.5) -- (1,0.25);
\draw[color=black, ->] (1,1) -- (1,0.75);
\draw[thick] plot[mark=*] (1,0.5);
} \end{array}
\Rrightarrow
\begin{array}{c}
\tikz[scale=0.5]{
\draw[color=black] (0,0) -- (0,1);
\draw[color=black] (1,0) -- (1,1);
\draw[color=black] (0,1) arc (180:0:0.5);
\draw[color=black] (0,0) arc (180:360:0.5);
\draw[color=black, ->] (0,0) -- (0,0.25);
\draw[color=black, ->] (0,0.5) -- (0,0.75);
\draw[color=black, ->] (1,1) -- (1,0.5);
\draw[thick] plot[mark=*] (0,0.5);
} \end{array}
\qquad
\qquad
\begin{array}{c}
\tikz[scale=0.5]{
\draw[color=black] (0,0) -- (0,1.5);
\draw[color=black] (1,0) -- (1,1.5);
\draw[color=black] (0,1.5) arc (180:0:0.5);
\draw[color=black] (0,0) arc (180:360:0.5);
\draw[color=black, ->] (0,0) -- (0,0.25);
\draw[color=black, ->] (0,0.5) -- (0,0.75);
\draw[color=black, ->] (0,1) -- (0,1.25);
\draw[color=black, ->] (1,1.5) -- (1,0.75);
\draw[thick] plot[mark=*] (0,0.5);
\draw[thick] plot[mark=*] (0,1);
} \end{array}
\Rrightarrow
\begin{array}{c}
\tikz[scale=0.5]{
\draw[color=black] (0,0) -- (0,1);
\draw[color=black] (1,0) -- (1,1);
\draw[color=black] (0,1) arc (180:0:0.5);
\draw[color=black] (0,0) arc (180:360:0.5);
\draw[color=black, ->] (0,0) -- (0,0.25);
\draw[color=black, ->] (0,0.5) -- (0,0.75);
\draw[color=black, ->] (1,0.5) -- (1,0.25);
\draw[color=black, ->] (1,1) -- (1,0.75);
\draw[thick] plot[mark=*] (1,0.5);
\draw[thick] plot[mark=*] (0,0.5);
} \end{array}
\Rrightarrow
\begin{array}{c}
\tikz[scale=0.5]{
\draw[color=black] (0,0) -- (0,1.5);
\draw[color=black] (1,0) -- (1,1.5);
\draw[color=black] (0,1.5) arc (180:0:0.5);
\draw[color=black] (0,0) arc (180:360:0.5);
\draw[color=black, ->] (0,0) -- (0,0.25);
\draw[color=black, ->] (0,0.5) -- (0,0.75);
\draw[color=black, ->] (0,1) -- (0,1.25);
\draw[color=black, ->] (1,1.5) -- (1,0.75);
\draw[thick] plot[mark=*] (0,0.5);
\draw[thick] plot[mark=*] (0,1);
} \end{array}
\qquad
\cdots  
\end{gather*}

\begin{gather*}
\begin{array}{c}
\tikz[scale=0.5]{
\draw[color=black] (0,0) -- (0,0.5);
\draw[color=black] (1,0) -- (1,0.5);
\draw[color=black] (1,0.5) arc (0:180:0.5);
\draw[color=black, ->] (0,0.5) -- (0,0.25);
\draw[color=black, ->] (1,0) -- (1,0.25);
\draw[color=black, ->] (0,2) arc (180:0:0.5);
\draw[color=black, ->] (1,2) arc (360:180:0.5);
} \end{array}
\Rrightarrow
\begin{array}{c}
\tikz[scale=0.5]{
\draw[color=black] (0,0) -- (0,1);
\draw[color=black] (2,0) -- (2,1);
\draw[color=black] (2,1) arc (0:180:1);
\draw[color=black, ->] (0,1) -- (0,0.5);
\draw[color=black, ->] (2,0) -- (2,0.5);
\draw[color=black, ->] (0.5,1) arc (180:0:0.5);
\draw[color=black, ->] (1.5,1) arc (360:180:0.5);
} \end{array}
\Rrightarrow
\begin{array}{c}
\tikz[scale=0.5]{
\draw[color=black] (0,0) -- (0,0.5);
\draw[color=black] (1,0) -- (1,0.5);
\draw[color=black] (1,0.5) arc (0:180:0.5);
\draw[color=black, ->] (0,0.5) -- (0,0.25);
\draw[color=black, ->] (1,0) -- (1,0.25);
\draw[color=black, ->] (0,2) arc (180:0:0.5);
\draw[color=black, ->] (1,2) arc (360:180:0.5);
} \end{array}
\qquad
\qquad
\begin{array}{c}
\tikz[scale=0.5]{
\draw[color=black] (0,0) -- (0,0.5);
\draw[color=black] (1,0) -- (1,0.5);
\draw[color=black] (1,0.5) arc (0:180:0.5);
\draw[color=black, ->] (0,0.5) -- (0,0.25);
\draw[color=black, ->] (1,0) -- (1,0.25);
\draw[color=black] (0,2.5) arc (180:0:0.5);
\draw[color=black, ->] (1,2) arc (360:180:0.5);
\draw[color=black, ->] (0,2) -- (0,2.5);
\draw[color=black] (1,2) -- (1,2.5);
\draw[color=black, ->] (1,2.5) -- (1,2.25);
\draw[thick] plot[mark=*] (0,2.25);
} \end{array}
\Rrightarrow
\begin{array}{c}
\tikz[scale=0.5]{
\draw[color=black] (0,0) -- (0,1.5);
\draw[color=black] (2,0) -- (2,1.5);
\draw[color=black] (2,1.5) arc (0:180:1);
\draw[color=black, ->] (0,1.5) -- (0,0.75);
\draw[color=black, ->] (2,0) -- (2,0.75);
\draw[color=black] (0.5,1.5) arc (180:0:0.5);
\draw[color=black, ->] (1.5,1) arc (360:180:0.5);
\draw[color=black, ->] (0.5,1) -- (0.5,1.5);
\draw[color=black] (1.5,1) -- (1.5,1.5);
\draw[color=black, ->] (1.5,1.5) -- (1.5,1.25);
\draw[thick] plot[mark=*] (0.5,1.25);
} \end{array}
+
\begin{array}{c}
\tikz[scale=0.5]{
\draw[color=black] (0,0) -- (0,1);
\draw[color=black] (2,0) -- (2,1);
\draw[color=black] (2,1) arc (0:180:1);
\draw[color=black, ->] (0,1) -- (0,0.5);
\draw[color=black, ->] (2,0) -- (2,0.5);
} \end{array}
\Rrightarrow
\begin{array}{c}
\tikz[scale=0.5]{
\draw[color=black] (0,0) -- (0,0.5);
\draw[color=black] (1,0) -- (1,0.5);
\draw[color=black] (1,0.5) arc (0:180:0.5);
\draw[color=black, ->] (0,0.5) -- (0,0.25);
\draw[color=black, ->] (1,0) -- (1,0.25);
\draw[color=black] (0,2.5) arc (180:0:0.5);
\draw[color=black, ->] (1,2) arc (360:180:0.5);
\draw[color=black, ->] (0,2) -- (0,2.5);
\draw[color=black] (1,2) -- (1,2.5);
\draw[color=black, ->] (1,2.5) -- (1,2.25);
\draw[thick] plot[mark=*] (0,2.25);
} \end{array}
\qquad \cdots 
\end{gather*}

\begin{gather*}
\begin{array}{c}
\tikz[scale=0.5]{
\draw[color=black] (0,0) -- (0,0.5);
\draw[color=black] (1,0) -- (1,0.5);
\draw[color=black] (1,0.5) arc (0:180:0.5);
\draw[color=black, ->] (0,0) -- (0,0.25);
\draw[color=black, ->] (1,0.5) -- (1,0.25);
\draw[color=black, ->] (0,2) arc (180:0:0.5);
\draw[color=black, ->] (1,2) arc (360:180:0.5);
} \end{array}
\Rrightarrow
\begin{array}{c}
\tikz[scale=0.5]{
\draw[color=black] (0,0) -- (0,1);
\draw[color=black] (2,0) -- (2,1);
\draw[color=black] (2,1) arc (0:180:1);
\draw[color=black, ->] (0,0) -- (0,0.5);
\draw[color=black, ->] (2,1) -- (2,0.5);
\draw[color=black, ->] (0.5,1) arc (180:0:0.5);
\draw[color=black, ->] (1.5,1) arc (360:180:0.5);
} \end{array}
\Rrightarrow
\begin{array}{c}
\tikz[scale=0.5]{
\draw[color=black] (0,0) -- (0,0.5);
\draw[color=black] (1,0) -- (1,0.5);
\draw[color=black] (1,0.5) arc (0:180:0.5);
\draw[color=black, ->] (0,0) -- (0,0.25);
\draw[color=black, ->] (1,0.5) -- (1,0.25);
\draw[color=black, ->] (0,2) arc (180:0:0.5);
\draw[color=black, ->] (1,2) arc (360:180:0.5);
} \end{array}
\qquad
\qquad
\begin{array}{c}
\tikz[scale=0.5]{
\draw[color=black] (0,0) -- (0,0.5);
\draw[color=black] (1,0) -- (1,0.5);
\draw[color=black] (1,0.5) arc (0:180:0.5);
\draw[color=black, ->] (0,0) -- (0,0.25);
\draw[color=black, ->] (1,0.5) -- (1,0.25);
\draw[color=black] (0,2.5) arc (180:0:0.5);
\draw[color=black, ->] (1,2) arc (360:180:0.5);
\draw[color=black, ->] (0,2) -- (0,2.5);
\draw[color=black] (1,2) -- (1,2.5);
\draw[color=black, ->] (1,2.5) -- (1,2.25);
\draw[thick] plot[mark=*] (0,2.25);
} \end{array}
\Rrightarrow
\begin{array}{c}
\tikz[scale=0.5]{
\draw[color=black] (0,0) -- (0,1.5);
\draw[color=black] (2,0) -- (2,1.5);
\draw[color=black] (2,1.5) arc (0:180:1);
\draw[color=black, ->] (0,0) -- (0,0.75);
\draw[color=black, ->] (2,1.5) -- (2,0.75);
\draw[color=black] (0.5,1.5) arc (180:0:0.5);
\draw[color=black, ->] (1.5,1) arc (360:180:0.5);
\draw[color=black, ->] (0.5,1) -- (0.5,1.5);
\draw[color=black] (1.5,1) -- (1.5,1.5);
\draw[color=black, ->] (1.5,1.5) -- (1.5,1.25);
\draw[thick] plot[mark=*] (0.5,1.25);
} \end{array}
-
\begin{array}{c}
\tikz[scale=0.5]{
\draw[color=black] (0,0) -- (0,1);
\draw[color=black] (2,0) -- (2,1);
\draw[color=black] (2,1) arc (0:180:1);
\draw[color=black, ->] (0,0) -- (0,0.5);
\draw[color=black, ->] (2,1) -- (2,0.5);
} \end{array}
\Rrightarrow
\begin{array}{c}
\tikz[scale=0.5]{
\draw[color=black] (0,0) -- (0,0.5);
\draw[color=black] (1,0) -- (1,0.5);
\draw[color=black] (1,0.5) arc (0:180:0.5);
\draw[color=black, ->] (0,0) -- (0,0.25);
\draw[color=black, ->] (1,0.5) -- (1,0.25);
\draw[color=black] (0,2.5) arc (180:0:0.5);
\draw[color=black, ->] (1,2) arc (360:180:0.5);
\draw[color=black, ->] (0,2) -- (0,2.5);
\draw[color=black] (1,2) -- (1,2.5);
\draw[color=black, ->] (1,2.5) -- (1,2.25);
\draw[thick] plot[mark=*] (0,2.25);
} \end{array}
\qquad \cdots 
\end{gather*}

\begin{gather*}
\begin{array}{c}
\tikz[scale=0.5]{
\draw[color=black] (0,4) -- (0,3.5);
\draw[color=black] (1,4) -- (1,3.5);
\draw[color=black] (1,3.5) arc (360:180:0.5);
\draw[color=black, ->] (0,3.5) -- (0,3.75);
\draw[color=black, ->] (1,4) -- (1,3.75);
\draw[color=black, ->] (0,2) arc (180:0:0.5);
\draw[color=black, ->] (1,2) arc (360:180:0.5);
} \end{array}
\Rrightarrow
\begin{array}{c}
\tikz[scale=0.5]{
\draw[color=black] (0,2) -- (0,1);
\draw[color=black] (2,2) -- (2,1);
\draw[color=black] (2,1) arc (360:180:1);
\draw[color=black, ->] (0,1) -- (0,1.5);
\draw[color=black, ->] (2,2) -- (2,1.5);
\draw[color=black, ->] (0.5,1) arc (180:0:0.5);
\draw[color=black, ->] (1.5,1) arc (360:180:0.5);
} \end{array}
\Rrightarrow
\begin{array}{c}
\tikz[scale=0.5]{
\draw[color=black] (0,4) -- (0,3.5);
\draw[color=black] (1,4) -- (1,3.5);
\draw[color=black] (1,3.5) arc (360:180:0.5);
\draw[color=black, ->] (0,3.5) -- (0,3.75);
\draw[color=black, ->] (1,4) -- (1,3.75);
\draw[color=black, ->] (0,2) arc (180:0:0.5);
\draw[color=black, ->] (1,2) arc (360:180:0.5);
} \end{array}
\qquad
\qquad
\begin{array}{c}
\tikz[scale=0.5]{
\draw[color=black] (0,4.5) -- (0,4);
\draw[color=black] (1,4.5) -- (1,4);
\draw[color=black] (1,4) arc (360:180:0.5);
\draw[color=black, ->] (0,4) -- (0,4.25);
\draw[color=black, ->] (1,4.5) -- (1,4.25);
\draw[color=black] (0,2.5) arc (180:0:0.5);
\draw[color=black, ->] (1,2) arc (360:180:0.5);
\draw[color=black, ->] (0,2) -- (0,2.5);
\draw[color=black] (1,2) -- (1,2.5);
\draw[color=black, ->] (1,2.5) -- (1,2.25);
\draw[thick] plot[mark=*] (0,2.25);
} \end{array}
\Rrightarrow
\begin{array}{c}
\tikz[scale=0.5]{
\draw[color=black] (0,2.5) -- (0,1);
\draw[color=black] (2,2.5) -- (2,1);
\draw[color=black] (2,1) arc (360:180:1);
\draw[color=black, ->] (0,1) -- (0,1.75);
\draw[color=black, ->] (2,2.5) -- (2,1.75);
\draw[color=black] (0.5,1.5) arc (180:0:0.5);
\draw[color=black, ->] (1.5,1) arc (360:180:0.5);
\draw[color=black, ->] (0.5,1) -- (0.5,1.5);
\draw[color=black] (1.5,1) -- (1.5,1.5);
\draw[color=black, ->] (1.5,1.5) -- (1.5,1.25);
\draw[thick] plot[mark=*] (0.5,1.25);
} \end{array}
-
\begin{array}{c}
\tikz[scale=0.5]{
\draw[color=black] (0,2) -- (0,1);
\draw[color=black] (2,2) -- (2,1);
\draw[color=black] (2,1) arc (360:180:1);
\draw[color=black, ->] (0,1) -- (0,1.5);
\draw[color=black, ->] (2,2) -- (2,1.5);
} \end{array}
\Rrightarrow
\begin{array}{c}
\tikz[scale=0.5]{
\draw[color=black] (0,4.5) -- (0,4);
\draw[color=black] (1,4.5) -- (1,4);
\draw[color=black] (1,4) arc (360:180:0.5);
\draw[color=black, ->] (0,4) -- (0,4.25);
\draw[color=black, ->] (1,4.5) -- (1,4.25);
\draw[color=black] (0,2.5) arc (180:0:0.5);
\draw[color=black, ->] (1,2) arc (360:180:0.5);
\draw[color=black, ->] (0,2) -- (0,2.5);
\draw[color=black] (1,2) -- (1,2.5);
\draw[color=black, ->] (1,2.5) -- (1,2.25);
\draw[thick] plot[mark=*] (0,2.25);
} \end{array}
\qquad \cdots 
\end{gather*}

\begin{gather*}
\begin{array}{c}
\tikz[scale=0.5]{
\draw[color=black] (0,4) -- (0,3.5);
\draw[color=black] (1,4) -- (1,3.5);
\draw[color=black] (1,3.5) arc (360:180:0.5);
\draw[color=black, ->] (0,4) -- (0,3.75);
\draw[color=black, ->] (1,3.5) -- (1,3.75);
\draw[color=black, ->] (0,2) arc (180:0:0.5);
\draw[color=black, ->] (1,2) arc (360:180:0.5);
} \end{array}
\Rrightarrow
\begin{array}{c}
\tikz[scale=0.5]{
\draw[color=black] (0,2) -- (0,1);
\draw[color=black] (2,2) -- (2,1);
\draw[color=black] (2,1) arc (360:180:1);
\draw[color=black, ->] (0,2) -- (0,1.5);
\draw[color=black, ->] (2,1) -- (2,1.5);
\draw[color=black, ->] (0.5,1) arc (180:0:0.5);
\draw[color=black, ->] (1.5,1) arc (360:180:0.5);
} \end{array}
\Rrightarrow
\begin{array}{c}
\tikz[scale=0.5]{
\draw[color=black] (0,4) -- (0,3.5);
\draw[color=black] (1,4) -- (1,3.5);
\draw[color=black] (1,3.5) arc (360:180:0.5);
\draw[color=black, ->] (0,4) -- (0,3.75);
\draw[color=black, ->] (1,3.5) -- (1,3.75);
\draw[color=black, ->] (0,2) arc (180:0:0.5);
\draw[color=black, ->] (1,2) arc (360:180:0.5);
} \end{array}
\qquad
\qquad
\begin{array}{c}
\tikz[scale=0.5]{
\draw[color=black] (0,4.5) -- (0,4);
\draw[color=black] (1,4.5) -- (1,4);
\draw[color=black] (1,4) arc (360:180:0.5);
\draw[color=black, ->] (0,4.5) -- (0,4.25);
\draw[color=black, ->] (1,4) -- (1,4.25);
\draw[color=black] (0,2.5) arc (180:0:0.5);
\draw[color=black, ->] (1,2) arc (360:180:0.5);
\draw[color=black, ->] (0,2) -- (0,2.5);
\draw[color=black] (1,2) -- (1,2.5);
\draw[color=black, ->] (1,2.5) -- (1,2.25);
\draw[thick] plot[mark=*] (0,2.25);
} \end{array}
\Rrightarrow
\begin{array}{c}
\tikz[scale=0.5]{
\draw[color=black] (0,2.5) -- (0,1);
\draw[color=black] (2,2.5) -- (2,1);
\draw[color=black] (2,1) arc (360:180:1);
\draw[color=black, ->] (0,2.5) -- (0,1.75);
\draw[color=black, ->] (2,1) -- (2,1.75);
\draw[color=black] (0.5,1.5) arc (180:0:0.5);
\draw[color=black, ->] (1.5,1) arc (360:180:0.5);
\draw[color=black, ->] (0.5,1) -- (0.5,1.5);
\draw[color=black] (1.5,1) -- (1.5,1.5);
\draw[color=black, ->] (1.5,1.5) -- (1.5,1.25);
\draw[thick] plot[mark=*] (0.5,1.25);
} \end{array}
+
\begin{array}{c}
\tikz[scale=0.5]{
\draw[color=black] (0,2) -- (0,1);
\draw[color=black] (2,2) -- (2,1);
\draw[color=black] (2,1) arc (360:180:1);
\draw[color=black, ->] (0,2) -- (0,1.5);
\draw[color=black, ->] (2,1) -- (2,1.5);
} \end{array}
\Rrightarrow
\begin{array}{c}
\tikz[scale=0.5]{
\draw[color=black] (0,4.5) -- (0,4);
\draw[color=black] (1,4.5) -- (1,4);
\draw[color=black] (1,4) arc (360:180:0.5);
\draw[color=black, ->] (0,4.5) -- (0,4.25);
\draw[color=black, ->] (1,4) -- (1,4.25);
\draw[color=black] (0,2.5) arc (180:0:0.5);
\draw[color=black, ->] (1,2) arc (360:180:0.5);
\draw[color=black, ->] (0,2) -- (0,2.5);
\draw[color=black] (1,2) -- (1,2.5);
\draw[color=black, ->] (1,2.5) -- (1,2.25);
\draw[thick] plot[mark=*] (0,2.25);
} \end{array}
\qquad \cdots 
\end{gather*}

Then, a quasi-reduced 2-cell does not rewrite into a linear combination of others. \end{proof}

\begin{small}
\renewcommand{\refname}{\Large\textsc{References}}
\bibliographystyle{alpha}
\bibliography{biblioihdr}
\end{small}

\section{Appendix}

In this appendix, we give all confluence diagrams for the critical branchings of $\overline{\mathrm{AOB}}$ verifying the axioms of decreasingness. For each $n$ in $\mathbb{N}$, with respect to the partition defined in the proof of lemma \ref{part2}, we index the rewriting steps of $P_n$ by $n$.

\begin{gather*}

\end{gather*}

\end{document}

balise: changer 3.4.16 en raisonnant sur les monômes. Ajouter une section 3.6 sur les présentations cohérentes avec Squier décroissant en corollaire.

\begin{definition} Let $\mathcal{C}$ be a linear~$(2,2)$-category. A \emph{coherent presentation} of $\mathcal{C}$ is a linear~$(4,2)$-polygraph~$\Sigma$ such that:
\begin{itemize}
\item $\mathcal{C}$ is presented by the underlying linear~$(3,2)$-polygraph to~$\Sigma$,
\item for every 2-cells $u$ and~$v$ of $\Sigma^\ell_2$ and every 3-cells $\alpha$ and~$\beta$ of $\Sigma^\ell_3(u,v)$ there is a 4-cell of $\Sigma^\ell_4$ from $\alpha$ to~$\beta$.
\end{itemize}
\end{definition}

important: Est-ce que localement confluent + décroissance des paires critiques implique confluence ?

balise: l'ensemble étiquette est N*N avec les traversées inf à tout et le reste utilie la distance ?
"computational model" est un truc de méthodes numériques
refaire VO avec un polygraphe ?
ne pas relire d'un trait

such that the following diagrams commute:
\begin{gather*}
\begin{array}{c}
\tikz[scale=0.9]{
\node at (0,0) {$\mathcal{C}(p,q) \times \mathcal{C}(q,r) \times \mathcal{C}(r,m)$};
\draw[color=black, ->] (-2.5,0) -- (-3.5,-1);
\draw[color=black, ->] (2.5,0) -- (3.5,-1);
\node at (-4.6,-0.5) {$\star^{p,q,r} \times id_{\mathcal{C}(r,m)}$};
\node at (4.6,-0.5) {$id_{\mathcal{C}(p,q)} \times \star^{q,r,m}$};
\node at (-3.5,-1.5) {$\mathcal{C}(p,r) \times \mathcal{C}(r,m)$};
\node at (3.5,-1.5) {$\mathcal{C}(p,q) \times \mathcal{C}(q,m)$};
\draw[color=black, ->] (-3.5,-2) -- (-1.5,-3);
\draw[color=black, ->] (3.5,-2) -- (1.5,-3);
\node at (-3.6,-2.5) {$\star^{p,r,m}$};
\node at (3.6,-2.5) {$\star^{p,q,m}$};
\node at (0,-3) {$\mathcal{C}(p,m)$};
} \end{array} \\
\begin{array}{c}
\tikz[scale=0.9]{
\node at (0,0) {$\mathcal{C}(p,q)$};
\draw[color=black, ->] (-0.7,0) -- (-1.7,-1);
\draw[color=black, ->] (0.7,0) -- (1.7,-1);
\node at (-1.5,-0.5) {$\simeq$};
\node at (1.5,-0.5) {$\simeq$};
\node at (-2,-1.5) {$I_n \times \mathcal{C}(p,q)$};
\node at (2,-1.5) {$\mathcal{C}(p,q) \times I_n$};
\draw[color=black, ->] (-1.5,-2) -- (-1.5,-3);
\draw[color=black, ->] (1.5,-2) -- (1.5,-3);
\node at (-2.7,-2.5) {$i_p \times id_{\mathcal{C}(p,q)}$};
\node at (2.7,-2.5) {$id_{\mathcal{C}(p,q)} \times i_q$};
\node at (-2.5,-3.5) {$\mathcal{C}(p,p) \times \mathcal{C}(p,q)$};
\node at (2.5,-3.5) {$\mathcal{C}(p,q) \times \mathcal{C}(q,q)$};
\draw[color=black, ->] (-1.7,-4) -- (-0.7,-5);
\draw[color=black, ->] (1.7,-4) -- (0.7,-5);
\node at (-2,-4.5) {$\star^{p,p,q}$};
\node at (2,-4.5) {$\star^{p,q,q}$};
\node at (0,-5) {$\mathcal{C}(p,q)$};
\draw[color=black] (-0.1,-0.5) -- (-0.1,-4.5);
\draw[color=black] (0.1,-0.5) -- (0.1,-4.5);
} \end{array}
\end{gather*}

\begin{gather*}
\begin{array}{c}
\tikz[scale=0.9]{
\node at (0,0) {$G_0$};
\draw[color=black, ->] (1.5,0.25) -- (0.5,0.25);
\node at (1,0.5) {$s_0$};
\draw[color=black, ->] (1.5,-0.25) -- (0.5,-0.25);
\node at (1,-0.5) {$t_0$};
\node at (2,0) {$G_1$};
\draw[color=black, ->] (3.5,0.25) -- (2.5,0.25);
\node at (3,0.5) {$s_1$};
\draw[color=black, ->] (3.5,-0.25) -- (2.5,-0.25);
\node at (3,-0.5) {$t_1$};
\node at (4,0) {$\cdots$};
\draw[color=black, ->] (5.5,0.25) -- (4.5,0.25);
\node at (5,0.5) {$s_{n-2}$};
\draw[color=black, ->] (5.5,-0.25) -- (4.5,-0.25);
\node at (5,-0.5) {$t_{n-2}$};
\node at (6,0) {$G_{n-1}$};
\draw[color=black, ->] (7.5,0.25) -- (6.5,0.25);
\node at (7,0.5) {$s_{n-1}$};
\draw[color=black, ->] (7.5,-0.25) -- (6.5,-0.25);
\node at (7,-0.5) {$t_{n-1}$};
\node at (8,0) {$G_n$};
\draw[color=black, ->] (0,-0.3) -- (0,-1.7);
\draw[color=black, ->] (2,-0.3) -- (2,-1.7);
\draw[color=black, ->] (6,-0.3) -- (6,-1.7);
\draw[color=black, ->] (8,-0.3) -- (8,-1.7);
\node at (0,-2) {$G'_0$};
\draw[color=black, ->] (1.5,-1.75) -- (0.5,-1.75);
\node at (1,-1.5) {$s'_0$};
\draw[color=black, ->] (1.5,-2.25) -- (0.5,-2.25);
\node at (1,-2.5) {$t'_0$};
\node at (2,-2) {$G'_1$};
\draw[color=black, ->] (3.5,-1.75) -- (2.5,-1.75);
\node at (3,-1.5) {$s'_1$};
\draw[color=black, ->] (3.5,-2.25) -- (2.5,-2.25);
\node at (3,-2.5) {$t'_1$};
\node at (4,-2) {$\cdots$};
\draw[color=black, ->] (5.5,-1.75) -- (4.5,-1.75);
\node at (5,-1.5) {$s'_{n-2}$};
\draw[color=black, ->] (5.5,-2.25) -- (4.5,-2.25);
\node at (5,-2.5) {$t'_{n-2}$};
\node at (6,-2) {$G'_{n-1}$};
\draw[color=black, ->] (7.5,-1.75) -- (6.5,-1.75);
\node at (7,-1.5) {$s'_{n-1}$};
\draw[color=black, ->] (7.5,-2.25) -- (6.5,-2.25);
\node at (7,-2.5) {$t'_{n-1}$};
\node at (8,-2) {$G'_n$};
} \end{array}
\end{gather*}

\begin{gather*}
\begin{array}{c}
\tikz[scale=0.5]{
\draw[color=black] (0,-1) -- (0,0);
\draw[color=black] (-1,-1) -- (-1,0);
\draw[color=black] (1,0) circle (0.5);
\draw[color=black] (0,0) arc (0:180:0.5);
} \end{array}
\prec
\begin{array}{c}
\tikz[scale=0.5]{
\draw[color=black] (0,-1) -- (0,1);
\draw[color=black] (-2,-1) -- (-2,1);
\draw[color=black] (-1,0) circle (0.5);
\draw[color=black] (0,1) arc (0:180:1);
} \end{array}
\qquad
\text{and}
\qquad
\begin{array}{c}
\tikz[scale=0.5]{
\draw[color=black] (0,-1) -- (0,1);
\draw[color=black] (-2,-1) -- (-2,1);
\draw[color=black] (-1,0) circle (0.5);
\draw[color=black] (0,1) arc (0:180:1);
} \end{array}
\prec
\begin{array}{c}
\tikz[scale=0.5]{
\draw[color=black] (0,-1) -- (0,0);
\draw[color=black] (-1,-1) -- (-1,0);
\draw[color=black] (1,0) circle (0.5);
\draw[color=black] (0,0) arc (0:180:0.5);
} \end{array}
\end{gather*}

There is a presentation by generators and relations of $\mathcal{AOB}$ given in \cite{BCNR}. The category~$\mathcal{AOB}$ is the linear~$(2,2)$-category with only one 0-cell, two generating 1-cells $\wedge$ and~$\vee$ and five generating 2-cells
$$1 \odfl{c} UD$,~$DU \odfl{d} 1$,~$UU \odfl{s} UU$,~$UD \odfl{t} DU$, and~$U \odfl{x} U$$
subject to the relations: 
$$cU \star_1 Ud=1_U,$$
$$Dc \star_1 Dd=1_D,$$
$$s \star_1 s=1_{UU},$$
$$sU \star_1 Us \star_1 sU=Us \star_1 sU \star_1 Us,$$
$$DUc \star_1 DsD \star_1 dUD \star_1 t=1_{DU},$$
$$t \star_1 DUc \star_1 DsD \star_1 dUD=1_{UD},$$
$$s \star_1 Ux=xU \star_1 s + 1_{UU}.$$

\begin{gather*}
\begin{array}{c}
\tikz[scale=0.9]{
\node at (0,0) {$f$};
\node at (0,1) {$\alpha +h$};
\node at (0,-1) {$\beta +h$};
\draw[color=black] (1.4,1) -- (1.4,1.4) -- (1,1.4);
\draw[color=black] (0.2,0.2) to (1.4,1.4);
\draw[color=black] (0.3,0.1) to (1.4,1.2);
\draw[color=black] (0.1,0.3) to (1.2,1.4);
\node at (2.5,1.7) {$t_2(\alpha)+h$};
\draw[color=black] (1.4,-1) -- (1.4,-1.4) -- (1,-1.4);
\draw[color=black] (0.2,-0.2) to (1.4,-1.4);
\draw[color=black] (0.3,-0.1) to (1.4,-1.2);
\draw[color=black] (0.1,-0.3) to (1.2,-1.4);
\node at (2.5,-1.7) {$t_2(\beta)+h$};
\node at (5.6,0) {$g+h$};
\draw[color=black, dashed] (4.9,-0.5) -- (4.9,-0.1) -- (4.5,-0.1);
\draw[color=black, dashed] (3.7,-1.3) to (4.9,-0.1);
\draw[color=black, dashed] (3.8,-1.4) to (4.9,-0.3);
\draw[color=black, dashed] (3.6,-1.2) to (4.7,-0.1);
\draw[color=black, dashed] (4.9,0.5) -- (4.9,0.1) -- (4.5,0.1);
\draw[color=black, dashed] (3.7,1.3) to (4.9,0.1);
\draw[color=black, dashed] (3.8,1.4) to (4.9,0.3);
\draw[color=black, dashed] (3.6,1.2) to (4.7,0.1);
\draw[color=black] (7.7,1.9) -- (7.9,1.7) -- (7.7,1.5);
\draw[color=black] (4,1.7) to (7.9,1.7);
\draw[color=black] (4,1.8) to (7.8,1.8);
\draw[color=black] (4,1.6) to (7.8,1.6);
\node at (8.2,1.7) {$f'$};
\draw[color=black] (7.7,-1.9) -- (7.9,-1.7) -- (7.7,-1.5);
\draw[color=black] (4,-1.7) to (7.9,-1.7);
\draw[color=black] (4,-1.8) to (7.8,-1.8);
\draw[color=black] (4,-1.6) to (7.8,-1.6);
\node at (8.2,-1.7) {$f''$};
\draw[color=black] (8,1) -- (8,1.4) -- (7.6,1.4);
\draw[color=black] (6.8,0.2) to (8,1.4);
\draw[color=black] (6.9,0.1) to (8,1.2);
\draw[color=black] (6.7,0.3) to (7.8,1.4);
\draw[color=black] (8,-1) -- (8,-1.4) -- (7.6,-1.4);
\draw[color=black] (6.8,-0.2) to (8,-1.4);
\draw[color=black] (6.9,-0.1) to (8,-1.2);
\draw[color=black] (6.7,-0.3) to (7.8,-1.4);
\draw[color=black] (9.9,-0.7) -- (9.9,-0.3) -- (9.5,-0.3);
\draw[color=black] (8.7,-1.5) to (9.9,-0.3);
\draw[color=black] (8.8,-1.6) to (9.9,-0.5);
\draw[color=black] (8.6,-1.4) to (9.7,-0.3);
\draw[color=black] (9.9,0.7) -- (9.9,0.3) -- (9.5,0.3);
\draw[color=black] (8.7,1.5) to (9.9,0.3);
\draw[color=black] (8.8,1.6) to (9.9,0.5);
\draw[color=black] (8.6,1.4) to (9.7,0.3);
\node at (10.2,0) {$\overline{f}$};
\node at (9.5,1.2) {$a$};
\node at (9.5,-1.2) {$b$};
} \end{array}
\end{gather*}

$i \star_2 a_I \star_2 b_J \star_2 c$ and~$j \star_2 a_J \star_2 b_I \star_2 c'$ have the same target,

\draw[color=black] (0,0) -- (0,1);
\draw[color=black] (1,0) -- (1,1);
\draw[color=black] (0,1) arc (180:0:0.5);
\draw[color=black] (0,0) arc (180:360:0.5);
\draw[thick] plot[mark=*] (0,0.5);
\draw[color=black, ->] (0,0) -- (0,0.25);
\draw[color=black, ->] (1,1) -- (1,0.5);
\draw[color=black, ->] (0,0.5) -- (0,0.75);
\draw[color=black] (2,0) -- (2,0.5);
\draw[color=black] (3,0) -- (3,0.5);
\draw[color=black] (2,0.5) arc (180:0:0.5);
\draw[color=black] (2,0) arc (180:360:0.5);
\draw[color=black, ->] (2,0) -- (2,0.25);
\draw[color=black, ->] (3,0.5) -- (3,0.25);

\begin{gather*}
\begin{array}{c}
\tikz[scale=0.5]{
\draw[color=black, ->] (0,0) -- (0,1);
\draw[color=black] (0,1) -- (0,2);
} \end{array}
~, \qquad
\begin{array}{c}
\tikz[scale=0.5]{
\draw[color=black] (0,0) -- (0,1);
\draw[color=black, ->] (0,2) -- (0,1);
} \end{array}
\end{gather*}

Many of those critical branchings are similar to another. For example, by studying this critical branching:
\begin{gather*}
\begin{array}{c}
\tikz[scale=0.5]{
\draw[color=black] (0,0) -- (0,3);
\draw[color=black] (3,4) -- (3,1);
\draw[color=black] (1,3) -- (1,2.5) -- (2,1.5) -- (2,1);
\draw[color=black] (1,0) -- (1,1.5) -- (2,2.5) -- (2,4);
\draw[color=black] (1,3) arc (0:180:0.5);
\draw[color=black] (3,1) arc (360:180:0.5);
\draw[color=black, ->] (0,0) -- (0,1.5);
\draw[color=black, ->] (3,1) -- (3,2.5);
\draw[color=black, ->] (1,3) -- (1,2.75);
\draw[color=black, ->] (2,1.5) -- (2,1.25);
\draw[color=black, ->] (1,0) -- (1,0.75);
\draw[color=black, ->] (2,2.5) -- (2,3.25);
} \end{array}
\Lleftarrow
\begin{array}{c}
\tikz[scale=0.5]{
\draw[color=black] (-1,-1) -- (-1,0.5) -- (0,1.5) -- (0,2);
\draw[color=black, ->] (-1,-1) -- (-1,-0.25);
\draw[color=black, ->] (0,1.5) -- (0,1.75);
\draw[color=black] (0,-1) -- (0,0.5) -- (-1,1.5) -- (-1,3.5);
\draw[color=black, ->] (0,-1) -- (0,-0.25);
\draw[color=black, ->] (-1,1.5) -- (-1,2.5);
\draw[color=black] (1,0) -- (1,2);
\draw[color=black] (1,2) arc (0:180:0.5);
\draw[color=black, ->] (1,2) -- (1,1);
\draw[color=black] (1,0) arc (180:360:0.5);
\draw[color=black] (2,0) -- (2,3.5);
\draw[color=black, ->] (2,0) -- (2,1.75);
} \end{array}
\Rrightarrow
\begin{array}{c}
\tikz[scale=0.5]{
\draw[color=black] (-1,0) -- (-1,0.5) -- (0,1.5) -- (0,2);
\draw[color=black, ->] (-1,0) -- (-1,0.25);
\draw[color=black, ->] (0,1.5) -- (0,1.75);
\draw[color=black] (0,0) -- (0,0.5) -- (-1,1.5) -- (-1,2);
\draw[color=black, ->] (0,0) -- (0,0.25);
\draw[color=black, ->] (-1,1.5) -- (-1,1.75);
} \end{array}
\end{gather*}
we also study those critical branchings:
\begin{gather*}
\begin{array}{c}
\tikz[scale=0.5]{
\draw[color=black] (0,0) -- (0,3);
\draw[color=black] (3,4) -- (3,1);
\draw[color=black] (1,3) -- (1,2.5) -- (2,1.5) -- (2,1);
\draw[color=black] (1,0) -- (1,1.5) -- (2,2.5) -- (2,4);
\draw[color=black] (1,3) arc (0:180:0.5);
\draw[color=black] (3,1) arc (360:180:0.5);
\draw[color=black, ->] (0,3) -- (0,1.5);
\draw[color=black, ->] (3,4) -- (3,2.5);
\draw[color=black, ->] (1,2.5) -- (1,2.75);
\draw[color=black, ->] (2,1) -- (2,1.25);
\draw[color=black, ->] (1,0) -- (1,0.75);
\draw[color=black, ->] (2,2.5) -- (2,3.25);
} \end{array}
\Lleftarrow
\begin{array}{c}
\tikz[scale=0.5]{
\draw[color=black] (-1,-1) -- (-1,0.5) -- (0,1.5) -- (0,2);
\draw[color=black, ->] (-1,0.5) -- (-1,-0.25);
\draw[color=black, ->] (0,2) -- (0,1.75);
\draw[color=black] (0,-1) -- (0,0.5) -- (-1,1.5) -- (-1,3.5);
\draw[color=black, ->] (0,-1) -- (0,-0.25);
\draw[color=black, ->] (-1,1.5) -- (-1,2.5);
\draw[color=black] (1,0) -- (1,2);
\draw[color=black] (1,2) arc (0:180:0.5);
\draw[color=black, ->] (1,0) -- (1,1);
\draw[color=black] (1,0) arc (180:360:0.5);
\draw[color=black] (2,0) -- (2,3.5);
\draw[color=black, ->] (2,2.5) -- (2,1.25);
} \end{array}
\Rrightarrow
\begin{array}{c}
\tikz[scale=0.5]{
\draw[color=black] (-1,0) -- (-1,0.5) -- (0,1.5) -- (0,2);
\draw[color=black, ->] (-1,0.5) -- (-1,0.25);
\draw[color=black, ->] (0,2) -- (0,1.75);
\draw[color=black] (0,0) -- (0,0.5) -- (-1,1.5) -- (-1,2);
\draw[color=black, ->] (0,0) -- (0,0.25);
\draw[color=black, ->] (-1,1.5) -- (-1,1.75);
} \end{array}
\qquad
\begin{array}{c}
\tikz[scale=0.5]{
\draw[color=black] (0,0) -- (0,3);
\draw[color=black] (3,4) -- (3,1);
\draw[color=black] (1,3) -- (1,2.5) -- (2,1.5) -- (2,1);
\draw[color=black] (1,0) -- (1,1.5) -- (2,2.5) -- (2,4);
\draw[color=black] (1,3) arc (0:180:0.5);
\draw[color=black] (3,1) arc (360:180:0.5);
\draw[color=black, ->] (0,0) -- (0,1.5);
\draw[color=black, ->] (3,1) -- (3,2.5);
\draw[color=black, ->] (1,3) -- (1,2.75);
\draw[color=black, ->] (2,1.5) -- (2,1.25);
\draw[color=black, ->] (1,1.5) -- (1,0.75);
\draw[color=black, ->] (2,4) -- (2,3.25);
} \end{array}
\Lleftarrow
\begin{array}{c}
\tikz[scale=0.5]{
\draw[color=black] (-1,-1) -- (-1,0.5) -- (0,1.5) -- (0,2);
\draw[color=black, ->] (-1,-1) -- (-1,-0.25);
\draw[color=black, ->] (0,1.5) -- (0,1.75);
\draw[color=black] (0,-1) -- (0,0.5) -- (-1,1.5) -- (-1,3.5);
\draw[color=black, ->] (0,0.5) -- (0,-0.25);
\draw[color=black, ->] (-1,4) -- (-1,2.5);
\draw[color=black] (1,0) -- (1,2);
\draw[color=black] (1,2) arc (0:180:0.5);
\draw[color=black, ->] (1,2) -- (1,1);
\draw[color=black] (1,0) arc (180:360:0.5);
\draw[color=black] (2,0) -- (2,4);
\draw[color=black, ->] (2,0) -- (2,2);
} \end{array}
\Rrightarrow
\begin{array}{c}
\tikz[scale=0.5]{
\draw[color=black] (-1,0) -- (-1,0.5) -- (0,1.5) -- (0,2);
\draw[color=black, ->] (-1,0) -- (-1,0.25);
\draw[color=black, ->] (0,1.5) -- (0,1.75);
\draw[color=black] (0,0) -- (0,0.5) -- (-1,1.5) -- (-1,2);
\draw[color=black, ->] (0,0.5) -- (0,0.25);
\draw[color=black, ->] (-1,2) -- (-1,1.75);
} \end{array}
\qquad
\begin{array}{c}
\tikz[scale=0.5]{
\draw[color=black] (0,0) -- (0,3);
\draw[color=black] (3,4) -- (3,1);
\draw[color=black] (1,3) -- (1,2.5) -- (2,1.5) -- (2,1);
\draw[color=black] (1,0) -- (1,1.5) -- (2,2.5) -- (2,4);
\draw[color=black] (1,3) arc (0:180:0.5);
\draw[color=black] (3,1) arc (360:180:0.5);
\draw[color=black, ->] (0,3) -- (0,1.5);
\draw[color=black, ->] (3,1) -- (3,2.5);
\draw[color=black, ->] (1,3) -- (1,2.75);
\draw[color=black, ->] (2,1.5) -- (2,1.25);
\draw[color=black, ->] (1,1.5) -- (1,0.75);
\draw[color=black, ->] (2,4) -- (2,3.25);
} \end{array}
\Lleftarrow
\begin{array}{c}
\tikz[scale=0.5]{
\draw[color=black] (-1,-1) -- (-1,0.5) -- (0,1.5) -- (0,2);
\draw[color=black, ->] (-1,-1) -- (-1,-0.25);
\draw[color=black, ->] (0,1.5) -- (0,1.75);
\draw[color=black] (0,-1) -- (0,0.5) -- (-1,1.5) -- (-1,3.5);
\draw[color=black, ->] (0,0.5) -- (0,-0.25);
\draw[color=black, ->] (-1,4) -- (-1,2.5);
\draw[color=black] (1,0) -- (1,2);
\draw[color=black] (1,2) arc (0:180:0.5);
\draw[color=black, ->] (1,2) -- (1,1);
\draw[color=black] (1,0) arc (180:360:0.5);
\draw[color=black] (2,0) -- (2,4);
\draw[color=black, ->] (2,0) -- (2,2);
} \end{array}
\Rrightarrow
\begin{array}{c}
\tikz[scale=0.5]{
\draw[color=black] (-1,0) -- (-1,0.5) -- (0,1.5) -- (0,2);
\draw[color=black, ->] (-1,0) -- (-1,0.25);
\draw[color=black, ->] (0,1.5) -- (0,1.75);
\draw[color=black] (0,0) -- (0,0.5) -- (-1,1.5) -- (-1,2);
\draw[color=black, ->] (0,0.5) -- (0,0.25);
\draw[color=black, ->] (-1,2) -- (-1,1.75);
} \end{array} \\
\begin{array}{c}
\tikz[scale=0.5]{
\draw[color=black] (0,0) -- (0,-3);
\draw[color=black] (3,-4) -- (3,-1);
\draw[color=black] (1,-3) -- (1,-2.5) -- (2,-1.5) -- (2,-1);
\draw[color=black] (1,0) -- (1,-1.5) -- (2,-2.5) -- (2,-4);
\draw[color=black] (1,-3) arc (360:180:0.5);
\draw[color=black] (3,-1) arc (0:180:0.5);
\draw[color=black, ->] (0,0) -- (0,-1.5);
\draw[color=black, ->] (3,-1) -- (3,-2.5);
\draw[color=black, ->] (1,-3) -- (1,-2.75);
\draw[color=black, ->] (2,-1.5) -- (2,-1.25);
\draw[color=black, ->] (1,0) -- (1,-0.75);
\draw[color=black, ->] (2,-2.5) -- (2,-3.25);
} \end{array}
\Lleftarrow
\begin{array}{c}
\tikz[scale=0.5]{
\draw[color=black] (-1,1) -- (-1,-0.5) -- (0,-1.5) -- (0,-2);
\draw[color=black, ->] (-1,1) -- (-1,0.25);
\draw[color=black, ->] (0,-1.5) -- (0,-1.75);
\draw[color=black] (0,1) -- (0,-0.5) -- (-1,-1.5) -- (-1,-3.5);
\draw[color=black, ->] (0,1) -- (0,0.25);
\draw[color=black, ->] (-1,-1.5) -- (-1,-2.5);
\draw[color=black] (1,0) -- (1,-2);
\draw[color=black] (1,-2) arc (360:180:0.5);
\draw[color=black, ->] (1,-2) -- (1,-1);
\draw[color=black] (1,0) arc (180:0:0.5);
\draw[color=black] (2,0) -- (2,-3.5);
\draw[color=black, ->] (2,0) -- (2,-1.75);
} \end{array}
\Rrightarrow
\begin{array}{c}
\tikz[scale=0.5]{
\draw[color=black] (-1,0) -- (-1,-0.5) -- (0,-1.5) -- (0,-2);
\draw[color=black, ->] (-1,0) -- (-1,-0.25);
\draw[color=black, ->] (0,-1.5) -- (0,-1.75);
\draw[color=black] (0,0) -- (0,-0.5) -- (-1,-1.5) -- (-1,-2);
\draw[color=black, ->] (0,0) -- (0,-0.25);
\draw[color=black, ->] (-1,-1.5) -- (-1,-1.75);
} \end{array}
\qquad
\begin{array}{c}
\tikz[scale=0.5]{
\draw[color=black] (0,0) -- (0,-3);
\draw[color=black] (3,-4) -- (3,-1);
\draw[color=black] (1,-3) -- (1,-2.5) -- (2,-1.5) -- (2,-1);
\draw[color=black] (1,0) -- (1,-1.5) -- (2,-2.5) -- (2,-4);
\draw[color=black] (1,-3) arc (360:180:0.5);
\draw[color=black] (3,-1) arc (0:180:0.5);
\draw[color=black, ->] (0,-3) -- (0,-1.5);
\draw[color=black, ->] (3,-4) -- (3,-2.5);
\draw[color=black, ->] (1,-2.5) -- (1,-2.75);
\draw[color=black, ->] (2,-1) -- (2,-1.25);
\draw[color=black, ->] (1,0) -- (1,-0.75);
\draw[color=black, ->] (2,-2.5) -- (2,-3.25);
} \end{array}
\Lleftarrow
\begin{array}{c}
\tikz[scale=0.5]{
\draw[color=black] (-1,1) -- (-1,-0.5) -- (0,-1.5) -- (0,-2);
\draw[color=black, ->] (-1,-0.5) -- (-1,0.25);
\draw[color=black, ->] (0,-2) -- (0,-1.75);
\draw[color=black] (0,1) -- (0,-0.5) -- (-1,-1.5) -- (-1,-3.5);
\draw[color=black, ->] (0,1) -- (0,0.25);
\draw[color=black, ->] (-1,-1.5) -- (-1,-2.5);
\draw[color=black] (1,0) -- (1,-2);
\draw[color=black] (1,-2) arc (360:180:0.5);
\draw[color=black, ->] (1,0) -- (1,-1);
\draw[color=black] (1,0) arc (180:0:0.5);
\draw[color=black] (2,0) -- (2,-3.5);
\draw[color=black, ->] (2,-2.5) -- (2,-1.25);
} \end{array}
\Rrightarrow
\begin{array}{c}
\tikz[scale=0.5]{
\draw[color=black] (-1,0) -- (-1,-0.5) -- (0,-1.5) -- (0,-2);
\draw[color=black, ->] (-1,-0.5) -- (-1,-0.25);
\draw[color=black, ->] (0,-2) -- (0,-1.75);
\draw[color=black] (0,0) -- (0,-0.5) -- (-1,-1.5) -- (-1,-2);
\draw[color=black, ->] (0,0) -- (0,-0.25);
\draw[color=black, ->] (-1,-1.5) -- (-1,-1.75);
} \end{array}\\
\begin{array}{c}
\tikz[scale=0.5]{
\draw[color=black] (0,0) -- (0,-3);
\draw[color=black] (3,-4) -- (3,-1);
\draw[color=black] (1,-3) -- (1,-2.5) -- (2,-1.5) -- (2,-1);
\draw[color=black] (1,0) -- (1,-1.5) -- (2,-2.5) -- (2,-4);
\draw[color=black] (1,-3) arc (360:180:0.5);
\draw[color=black] (3,-1) arc (0:180:0.5);
\draw[color=black, ->] (0,0) -- (0,-1.5);
\draw[color=black, ->] (3,-1) -- (3,-2.5);
\draw[color=black, ->] (1,-3) -- (1,-2.75);
\draw[color=black, ->] (2,-1.5) -- (2,-1.25);
\draw[color=black, ->] (1,-1.5) -- (1,-0.75);
\draw[color=black, ->] (2,-4) -- (2,-3.25);
} \end{array}
\Lleftarrow
\begin{array}{c}
\tikz[scale=0.5]{
\draw[color=black] (-1,1) -- (-1,-0.5) -- (0,-1.5) -- (0,-2);
\draw[color=black, ->] (-1,1) -- (-1,0.25);
\draw[color=black, ->] (0,-1.5) -- (0,-1.75);
\draw[color=black] (0,1) -- (0,-0.5) -- (-1,-1.5) -- (-1,-3.5);
\draw[color=black, ->] (0,-0.5) -- (0,0.25);
\draw[color=black, ->] (-1,-4) -- (-1,-2.5);
\draw[color=black] (1,0) -- (1,-2);
\draw[color=black] (1,-2) arc (360:180:0.5);
\draw[color=black, ->] (1,-2) -- (1,-1);
\draw[color=black] (1,0) arc (180:0:0.5);
\draw[color=black] (2,0) -- (2,-4);
\draw[color=black, ->] (2,0) -- (2,-2);
} \end{array}
\Rrightarrow
\begin{array}{c}
\tikz[scale=0.5]{
\draw[color=black] (-1,0) -- (-1,-0.5) -- (0,-1.5) -- (0,-2);
\draw[color=black, ->] (-1,0) -- (-1,-0.25);
\draw[color=black, ->] (0,-1.5) -- (0,-1.75);
\draw[color=black] (0,0) -- (0,-0.5) -- (-1,-1.5) -- (-1,-2);
\draw[color=black, ->] (0,-0.5) -- (0,-0.25);
\draw[color=black, ->] (-1,-2) -- (-1,-1.75);
} \end{array}
\qquad
\begin{array}{c}
\tikz[scale=0.5]{
\draw[color=black] (0,0) -- (0,-3);
\draw[color=black] (3,-4) -- (3,-1);
\draw[color=black] (1,-3) -- (1,-2.5) -- (2,-1.5) -- (2,-1);
\draw[color=black] (1,0) -- (1,-1.5) -- (2,-2.5) -- (2,-4);
\draw[color=black] (1,-3) arc (360:180:0.5);
\draw[color=black] (3,-1) arc (0:180:0.5);
\draw[color=black, ->] (0,-3) -- (0,-1.5);
\draw[color=black, ->] (3,-1) -- (3,-2.5);
\draw[color=black, ->] (1,-3) -- (1,-2.75);
\draw[color=black, ->] (2,-1.5) -- (2,-1.25);
\draw[color=black, ->] (1,-1.5) -- (1,-0.75);
\draw[color=black, ->] (2,-4) -- (2,-3.25);
} \end{array}
\Lleftarrow
\begin{array}{c}
\tikz[scale=0.5]{
\draw[color=black] (-1,1) -- (-1,-0.5) -- (0,-1.5) -- (0,-2);
\draw[color=black, ->] (-1,1) -- (-1,0.25);
\draw[color=black, ->] (0,-1.5) -- (0,-1.75);
\draw[color=black] (0,1) -- (0,-0.5) -- (-1,-1.5) -- (-1,-3.5);
\draw[color=black, ->] (0,-0.5) -- (0,0.25);
\draw[color=black, ->] (-1,-4) -- (-1,-2.5);
\draw[color=black] (1,0) -- (1,-2);
\draw[color=black] (1,-2) arc (360:180:0.5);
\draw[color=black, ->] (1,-2) -- (1,-1);
\draw[color=black] (1,0) arc (180:0:0.5);
\draw[color=black] (2,0) -- (2,-4);
\draw[color=black, ->] (2,0) -- (2,-2);
} \end{array}
\Rrightarrow
\begin{array}{c}
\tikz[scale=0.5]{
\draw[color=black] (-1,0) -- (-1,-0.5) -- (0,-1.5) -- (0,-2);
\draw[color=black, ->] (-1,0) -- (-1,-0.25);
\draw[color=black, ->] (0,-1.5) -- (0,-1.75);
\draw[color=black] (0,0) -- (0,-0.5) -- (-1,-1.5) -- (-1,-2);
\draw[color=black, ->] (0,-0.5) -- (0,-0.25);
\draw[color=black, ->] (-1,-2) -- (-1,-1.75);
} \end{array}
\end{gather*}

\begin{corollary}\label{Bru} Let $\mathcal{C}$ be the linear~$(2,2)$-category~$\mathcal{AOB}$. Let $a$ and~$b$ be two 1-cells of $\mathcal{C}$. Then, the vector space $\mathcal{C}_2(a,b)$ has basis given by equivalence classes of normally ordered dotted oriented Brauer diagrams with bubbles with source $a$ and target $b$. \end{corollary}